\documentclass[a4paper, 11pt]{article}
\usepackage{mathrsfs}
\usepackage{mathrsfs}
\usepackage{amsmath,amssymb}
\usepackage{amsfonts}
\usepackage[T1]{fontenc}
\usepackage[latin9]{inputenc}
\usepackage{amsthm}
\usepackage{graphicx}
\usepackage{amsmath}

\usepackage{amsthm}
\usepackage{array}
\usepackage{cases}
\usepackage{enumerate,enumitem,todonotes}
\usepackage{thmtools}
\usepackage{thm-restate}
\usepackage{setspace}  
\usepackage{changes}
\usepackage{float} 

\makeatletter
\def\th@plain{%
  \itshape 
}
\makeatother

\makeatletter
\renewenvironment{proof}[1][\proofname]{\par
  \pushQED{\qed}%
  \normalfont \topsep6\p@\@plus6\p@\relax
  \trivlist
  \item[\hskip\labelsep
        \bfseries
    #1\@addpunct{.}]\ignorespaces
}{%
  \popQED\endtrivlist\@endpefalse
}
\makeatother

\newtheorem{theorem}{Theorem}[section]

\numberwithin{equation}{section}

\newtheorem{thm}{Theorem}[section]
\newtheorem{cor}[thm]{Corollary}
\newtheorem{claim}[thm]{Claim}
\newtheorem{lemma}[thm]{Lemma}
\newtheorem{example}[thm]{Example}

\newtheorem{conj}[thm]{Conjecture}
\newtheorem{pblm}[thm]{Problem}

\numberwithin{equation}{section}

\usepackage[varg]{txfonts}
\usepackage{graphicx, epsfig, subfigure}
\usepackage{enumerate}
\usepackage[square, numbers, sort&compress]{natbib}
\ifx\pdfoutput\undefined
 \usepackage[dvipdfm,%
  pdfstartview=FitH,
  bookmarks=true,%
  bookmarksnumbered=true,
  bookmarksopen=true,
  plainpages=false,%
  pdfpagelabels,%
  colorlinks=true,
  linkcolor=blue,
  citecolor=blue,%
  urlcolor=black,
  pdfborder=001]{hyperref}
  \AtBeginDvi{}  
\else
 \usepackage[pdftex,%
  pdfstartview=FitH,
  bookmarks=true,%
  bookmarksnumbered=true,
  bookmarksopen=true,
  plainpages=false,%
  pdfpagelabels,%
  colorlinks=true,
  linkcolor=blue,
  citecolor=blue,%
  urlcolor=black,
  pdfborder=001]{hyperref}%
\fi

\usepackage{tikz}
\tikzset{ dot/.style={circle, draw=black, fill=black, inner sep=0pt, minimum size=5pt}}

\usepackage[left]{lineno}
\RequirePackage[normalem]{ulem} 
\RequirePackage{color}\definecolor{RED}{rgb}{1,0,0}\definecolor{BLUE}{rgb}{0,0,1} 
\providecommand{\DIFaddbegin}{} 
\providecommand{\DIFaddend}{} 
\providecommand{\DIFdelbegin}{} 
\providecommand{\DIFdelend}{} 
\providecommand{\DIFaddbeginFL}{} 
\providecommand{\DIFaddendFL}{} 
\providecommand{\DIFdelbeginFL}{} 
\providecommand{\DIFdelendFL}{} 
\newcommand{\DIFscaledelfig}{0.5}
\RequirePackage{settobox} 
\RequirePackage{letltxmacro} 
\newsavebox{\DIFdelgraphicsbox} 
\newlength{\DIFdelgraphicswidth} 
\newlength{\DIFdelgraphicsheight} 
\LetLtxMacro{\DIFOincludegraphics}{\includegraphics} 
\newcommand{\DIFaddincludegraphics}[2][]{{\color{blue}\fbox{\DIFOincludegraphics[#1]{#2}}}} 
\newcommand{\DIFdelincludegraphics}[2][]{
\sbox{\DIFdelgraphicsbox}{\DIFOincludegraphics[#1]{#2}}
\settoboxwidth{\DIFdelgraphicswidth}{\DIFdelgraphicsbox} 
\settoboxtotalheight{\DIFdelgraphicsheight}{\DIFdelgraphicsbox} 
\scalebox{\DIFscaledelfig}{
\parbox[b]{\DIFdelgraphicswidth}{\usebox{\DIFdelgraphicsbox}\\[-\baselineskip] \rule{\DIFdelgraphicswidth}{0em}}\llap{\resizebox{\DIFdelgraphicswidth}{\DIFdelgraphicsheight}{
\setlength{\unitlength}{\DIFdelgraphicswidth}
\begin{picture}(1,1)
\thicklines\linethickness{2pt} 
{\color[rgb]{1,0,0}\put(0,0){\framebox(1,1){}}}
{\color[rgb]{1,0,0}\put(0,0){\line( 1,1){1}}}
{\color[rgb]{1,0,0}\put(0,1){\line(1,-1){1}}}
\end{picture}
}\hspace*{3pt}}} 
} 
\LetLtxMacro{\DIFOaddbegin}{\DIFaddbegin} 
\LetLtxMacro{\DIFOaddend}{\DIFaddend} 
\LetLtxMacro{\DIFOdelbegin}{\DIFdelbegin} 
\LetLtxMacro{\DIFOdelend}{\DIFdelend} 
\DeclareRobustCommand{\DIFaddbegin}{\DIFOaddbegin \let\includegraphics\DIFaddincludegraphics} 
\DeclareRobustCommand{\DIFaddend}{\DIFOaddend \let\includegraphics\DIFOincludegraphics} 
\DeclareRobustCommand{\DIFdelbegin}{\DIFOdelbegin \let\includegraphics\DIFdelincludegraphics} 
\DeclareRobustCommand{\DIFdelend}{\DIFOaddend \let\includegraphics\DIFOincludegraphics} 
\LetLtxMacro{\DIFOaddbeginFL}{\DIFaddbeginFL} 
\LetLtxMacro{\DIFOaddendFL}{\DIFaddendFL} 
\LetLtxMacro{\DIFOdelbeginFL}{\DIFdelbeginFL} 
\LetLtxMacro{\DIFOdelendFL}{\DIFdelendFL} 
\DeclareRobustCommand{\DIFaddbeginFL}{\DIFOaddbeginFL \let\includegraphics\DIFaddincludegraphics} 
\DeclareRobustCommand{\DIFaddendFL}{\DIFOaddendFL \let\includegraphics\DIFOincludegraphics} 
\DeclareRobustCommand{\DIFdelbeginFL}{\DIFOdelbeginFL \let\includegraphics\DIFdelincludegraphics} 
\DeclareRobustCommand{\DIFdelendFL}{\DIFOaddendFL \let\includegraphics\DIFOincludegraphics} 
\title{\LARGE On $(1,2^4)$ and $(1,2^5)$-packing edge-coloring of sparse subcubic graphs}
\author{
Seog-Jin Kim\thanks{Department of Mathematics Education, Konkuk University, Korea. \texttt{skim12@konkuk.ac.kr}; This work was supported by the National Research Foundation of Korea (NRF) grant funded by the Korea government (MSIT) (RS-2026-25470022).}
\thanks{Korea Institute for Advanced Study (KIAS), Seoul, Korea.}
\and
Xujun Liu\thanks{Department of Applied Mathematics, Xi'an Jiaotong-Liverpool University, Suzhou, Jiangsu Province, 215123, China, \texttt{xujun.liu@xjtlu.edu.cn}; the research of X. Liu was supported by the National Natural Science Foundation of China under grants No.~12401466 and No. 12671413.}
\and
Boyan Xu\thanks{School of Data Science and Information Engineering, Guizhou Minzu University, Guiyang, Guizhou Province, 550025, China, \texttt{boyan04518@gmail.com}}
}
\date{}

\begin{document}

\maketitle

\begin{abstract}
A $(1^j,2^k)$-packing edge-coloring of a graph $G$ is a partition of the edge set $E(G)$ into $j$ matchings and $k$ induced matchings. Hocquard, Lajou, and Lu\v zar found a subcubic planar graph of girth $3$ that has no $(1,2^5)$-packing edge-coloring and also conjectured that every subcubic planar graph has a $(1,2^6)$-packing edge-coloring. We also notice that for every fixed positive integer $k$ there exists a subcubic planar graph with girth $k$ that is not $(1,2^3)$-packing edge-colorable. 

It is natural to consider what is the minimum positive integer $k_1$ such that every subcubic planar graph with girth at least $k_1$ is $(1,2^5)$-packing edge-colorable. Furthermore, we also consider what is the minimum positive integer $k_2$ such that every subcubic planar graph with girth at least $k_2$ is $(1,2^4)$-packing edge-colorable. In this paper, we show both $k_1$ and $k_2$ are finite, and in fact $5 \le k_1 \le 12$ and $6 \le k_2 \le 16$.

\end{abstract}

\section{Introduction}\label{sec:introduction}

For a graph $G$, let $L(G)$ denote its line graph.  The distance
between two distinct edges $e,f\in E(G)$, denoted by $d_G(e,f)$, is
the distance between their corresponding vertices of $L(G)$. Given a nondecreasing
sequence $S=(s_1,\ldots,s_t)$ of positive integers, an
$S$-packing edge-coloring of $G$ is a partition
\[
    E(G)=E_1\mathbin{\dot\cup}\cdots\mathbin{\dot\cup}E_t
\]
such that for every pair of distinct edges $e_1,e_2 \in E_i$, where
$i\in\{1,\ldots,t\}$, the distance between $e_1$ and $e_2$ is at least $s_i+1$. We use exponents to denote repeated entries in $S$, e.g., $(1,1,2,2,2)$ is denoted as $(1^2,2^3)$. In particular, a
$(1^j,2^k)$-packing edge-coloring is a partition of
$E(G)$ into $j$ matchings and $k$ induced matchings.

This framework interpolates naturally between proper and strong
edge-colorings.  Indeed, a $(1^j)$-packing edge-coloring is a proper
$j$-edge-coloring, while a $(2^k)$-packing edge-coloring is a strong
$k$-edge-coloring.  Recall that a graph is \emph{subcubic} if its
maximum degree is at most $3$, and that the \emph{girth} of a graph is
the length of a shortest cycle, with the girth of a forest taken to be
infinite.  By Vizing's theorem~\cite{V1}, every simple subcubic graph is
$(1^4)$-packing edge-colorable. Payan~\cite{P1}, and
independently Fouquet and Vanherpe~\cite{FV1}, proved the first
result in this flavor: every subcubic graph is
$(1^3,2)$-packing edge-colorable.

The notion strong $k$-edge-coloring, introduced by Fouquet and Jolivet~\cite{FJ1} in 1983, is equivalent to the $(2^k)-$packing edge-coloring. The strong chromatic index of a graph $G$, denoted by $\chi'_s(G)$, is the minimum $k$ such that $G$ has a strong $k$-edge-coloring. Erd\H{o}s and Ne\v set\v ril~\cite{EN1} conjectured that every graph $G$ with maximum degree $\Delta$ has $\chi'_s(G) \le \frac{5}{4} \Delta^2$ if $\Delta$ is even and $\chi'_s(G) \le \frac{5}{4} \Delta^2 - \frac{1}{2} \Delta + \frac{1}{4}$ if $\Delta$ is odd. Andersen~\cite{A1}, 
and independently, Hor\'ak, Qing, and Trotter~\cite{HQT1} proved that every
subcubic graph is $(2^{10})$-packing edge-colorable, which confirmed the conjecture of Erd\H{o}s and Ne\v set\v ril for the case when $\Delta = 3$. However, the conjecture remains open for the case when $\Delta \ge 4$. The best result for $\Delta = 4$ was proved by Huang, Santana, and Yu~\cite{HSY1}, who showed an upper bound of $21$.

The systematic study of $S$-packing edge-colorings was initiated by
Gastineau and Togni \cite{GT1}.  They proved that every cubic graph with a $2$-factor admits a
$(1^2,2^5)$-packing edge-coloring, and that four induced-matching
colors suffice when the graph is $3$-edge-colorable.  They also
exhibited a subcubic graph with no $(1,2^6)$-packing edge-coloring
and asked whether every cubic graph is
$(1,2^7)$-packing edge-colorable. 

Hocquard, Lajou, and Lu\v{z}ar~\cite{HLL1} substantially advanced the study of this topic. They proved that every subcubic graph
admits both a $(1,2^8)$-packing edge-coloring and a $(1^2,2^5)$-packing edge-coloring. For $3$-edge-colorable subcubic graphs, they
improved these bounds to $(1,2^7)$ and $(1^2,2^4)$, respectively.
They conjectured that the latter two bounds hold for all subcubic
graphs.  The one-matching conjecture was subsequently confirmed, in
the stronger setting of subcubic multigraphs, by Liu, Santana, and
Short~\cite{LSS1}. For two
matching colors, Liu and Yu~\cite{LY1} proved that every connected subcubic graph
with more than $70$ vertices is $(1^2,2^4)$-packing edge-colorable. 

For planar graphs, Kostochka, Li, Ruksasakchai, Santana, Wang, and Yu~\cite{KLRSWY1}
proved that every subcubic planar multigraph is
$(2^9)$-packing edge-colorable, and it is sharp. Hocquard,
Lajou, and Lu\v{z}ar~\cite{HLL1} proposed the following more refined conjecture:
every subcubic planar graph admits a $(1,2^6)$-packing edge-coloring
and a $(1^2,2^3)$-packing edge-coloring. Their proposed one-matching bound, if
true, is best possible, since they constructed a subcubic planar graph of girth $3$ that has no $(1,2^5)$-packing edge-coloring. The
conjecture has been confirmed for outerplanar graphs by Li, Li, and
Liu~\cite{LLL1}, who proved the stronger statement that every subcubic
outerplanar graph is $(1,2^5)$-packing edge-colorable and
$(1^2,2^3)$-packing edge-colorable. Both results are also sharp within
that class. Chen, Deng, Nan, Tan, and Zhou~\cite{CDNTZ1} recently showed that every claw-free subcubic graph without triangular prism components is $(1,2^5)$-packing edge-colorable.

Large girth provides a natural measure of how closely a planar graph
resembles a forest.  At the acyclic extreme, every subcubic tree is
$(1,2^4)$-packing edge-colorable~\cite{HLL1}. Liu, Yang, and Zhang~\cite{LYZ1} proved every subcubic planar graph with girth at least $20$ has a $(1^2,2^2)$-packing edge-coloring. Previous results on strong
edge-coloring already imply general, though nonoptimal, girth
thresholds for the $S$-packing edge-coloring problem.  Hocquard, Montassier, Raspaud, and
Valicov~\cite{HMRV1} proved that every subcubic planar graph of girth at least $14$
has a strong $6$-edge-coloring.  Similarly, DeOrsey, Ferrara, Graber, Hartke, Nelsen, Sullivan, Jahanbekam, Lidick\'y, Stolee, and White~\cite{DFGHNSJLSW1} proved that girth at least $30$ guarantees a strong
$5$-edge-coloring.
Since an induced matching is also a matching, these results imply,
respectively, a $(1,2^5)$-packing edge-coloring for girth at least
$14$ and a $(1,2^4)$-packing edge-coloring for girth at least $30$.
However, strong edge-colorings do not exploit the additional
flexibility supplied by the matching color, and substantially smaller
thresholds may therefore be expected.  

Motivated by these observations, let $k_1$ be the least positive
integer $g$ such that every subcubic planar graph of girth at least
$g$ is $(1,2^5)$-packing edge-colorable.  Similarly, let $k_2$
be the least positive integer $g$ such that every subcubic planar
graph of girth at least $g$ is $(1,2^4)$-packing edge-colorable.
We first notice that no analogous threshold exists with only three
induced-matching colors. It was shown in~\cite{LYZ1} that there are subcubic planar graphs with arbitrarily large girth that are not $(1,2^3)$-packing edge-colorable.

Our main results show that the two thresholds $k_1$ and $k_2$ satisfy $5\le k_1\le 12$ and $ 6\le k_2\le 16$.

\begin{theorem}\label{thm1}
Every subcubic planar graph with girth at least $12$ has a $(1,2^5)$-packing edge-coloring.   
\end{theorem}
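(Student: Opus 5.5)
We will argue by minimal counterexample and discharging. Let $G$ be a counterexample to Theorem~\ref{thm1} with $|E(G)|$ minimum, with colors $1$ (the matching color) and $a,b,c,d,e$ (the induced-matching colors). Clearly $G$ is connected, and $G$ has no vertex of degree $1$: if $uv$ is a pendant edge with $d(u)=1$, we color $G-u$ by minimality. The edge $uv$ then has at most $2$ edges at distance $1$ and at most $4$ edges at distance exactly $2$. So it has at most $6$ conflicting edges in total, but only the edges at distance $1$ can block color $1$. If those neighbours use color $1$, the six remaining colors must be checked more carefully. The tool for this is a recoloring lemma: when an edge sees all of $a,\dots,e$ within distance $2$ and sees $1$ at distance $1$, we swap color $1$ with an induced color on an adjacent edge.

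\textbf{Step 1: reducible configurations.} We will establish a list of configurations that cannot occur in $G$. For each one we delete a small set of edges, color the rest by minimality, and extend greedily; where greedy extension fails, we extend using the flexibility of color $1$. The configurations we aim for are:
\begin{itemize}
\item[(C1)] two adjacent $2$-vertices;
\item[(C2)] a $3$-vertex adjacent to three $2$-vertices;
\item[(C3)] a $3$-vertex adjacent to two $2$-vertices whose third neighbour is adjacent to a $2$-vertex.
\end{itemize}
These are modelled on the configurations used by Hocquard et al.\ for strong $6$-edge-coloring at girth $14$. Since we have the extra matching color, we expect somewhat larger configurations to be reducible here. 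Possibly we also need longer chains: paths of $3$-vertices, each carrying a pendant $2$-vertex (``$3$-vertices of type $(1)$''), up to some length.

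The key counting fact for these proofs is as follows. An edge $uv$ with $d(u)=2$, $d(v)=3$ has at most $3$ edges at distance $1$ and at most $6$ at distance $2$. After the deletion, the relevant counts drop so that at least one of the six colors remains available. We use color $1$ whenever no adjacent edge has color $1$, and we arrange this by first choosing color $1$ on a suitable edge of the configuration. Girth at least $12$ ensures that all edges within distance about $4$ of the configuration induce a tree, so the counts are exact and no coincidences occur.

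\textbf{Step 2: discharging.} Give each vertex $v$ the charge $\mu(v)=5d(v)-14$ and each face $f$ the charge $\mu(f)=d(f)-14$... Instead, the cleaner choice at girth $12$ is the standard one. Euler's formula with girth $g\ge 12$ gives
\[
\mathrm{mad}(G) < \frac{2g}{g-2}\le \frac{12}{5},
\]
so it suffices to show that the reducible configurations force average degree at least $12/5$. Give each vertex charge $d(v)-\frac{12}{5}$, so each $2$-vertex has charge $-\frac25$ and each $3$-vertex has charge $\frac35$. The rule is that each $3$-vertex sends $\frac15$ to each adjacent $2$-vertex, and a $2$-vertex receives its remaining deficit from $3$-vertices at distance $2$ along suitable paths. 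By (C1) and (C2), every $2$-vertex has two $3$-neighbours, so it receives $\frac25$ and ends at $0$. Each $3$-vertex sends at most $\frac35$ and keeps nonnegative charge. This covers the case in which (C1) and (C2) suffice, which is exactly $\mathrm{mad}<12/5$. Thus (C1) and (C2) alone would finish the proof, and (C3) is only a fallback.

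\textbf{Main obstacle.} The hard part is the reducibility of (C2): a $3$-vertex $v$ with three $2$-neighbours $x_1,x_2,x_3$. Delete $v$ and its three edges, color $G-v$, and try to color $vx_1,vx_2,vx_3$. Each edge $vx_i$ sees $1$ edge at distance $1$ outside the configuration and $2+2\cdot$(other) edges at distance $2$, roughly $4$ fixed outside edges each. At most one of the three edges can receive color $1$, and only if the edge beyond $x_i$ is not colored $1$. First, we recolor the outer edges at the $x_i$ so that at least one of them avoids color $1$; this is possible since those edges have slack once $v$ is removed. Then we color that $vx_i$ with $1$ and color the other two with distinct induced colors from lists of size at least $2$. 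If the lists coincide and have size $2$, we instead recolor one outer edge. I expect careful handling of this case analysis, including whether (C2) genuinely reduces or whether the weaker (C3)-type configurations are needed, to be the main work.
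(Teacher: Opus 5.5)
Your discharging is fine; in fact it needs only $\delta(G)\ge 2$ and (C1). A $3$-vertex with three $2$-neighbours still ends with charge exactly $0$, so (C2) and (C3) play no role, and (C2) is not the real obstacle. The real gap is that (C1) is only announced, and the natural reduction for it breaks down.

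Take a $2$-thread $uv_1v_2w$ and delete $v_1v_2$. In $G-v_1v_2$ the edges $uv_1$ and $v_2w$ are far apart, so the coloring $f$ may give both the same induced color $2_1$. In $G$ they are at distance $2$ whatever $v_1v_2$ receives, so one of them must be recolored. Suppose the other neighbours $x,y$ of $u$ have degree $3$, with $f(ux)=1$, $f(uy)=2_2$, the two further edges at $x$ colored $2_3,2_4$, and a further edge at $y$ colored $2_5$. Then $uv_1$ can take neither $1$ (because of $ux$) nor any induced color. The same can happen at $w$ for $v_2w$. Girth $12$ does not prevent this, because the coincidence $f(uv_1)=f(v_2w)$ is created by the deletion, not by a short cycle.

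This is exactly why the paper never excludes $2$-threads. It excludes only $3$-chains, and it reduces a $2$-thread at a $3$-vertex $u$ only when $u$ also has enough degree-$2$ neighbours: two further $1$-chains in Lemma~\ref{no211}, or a second $2$-thread and a $3$-vertex in Lemma~\ref{no220}. Those degree-$2$ neighbours supply the slack needed to recolor $uv_1$ or a nearby edge. The paper's discharging accordingly sends $\frac15$ from each $3$-vertex to every $2$-vertex reachable along a path of $2$-vertices, and uses that a $3$-vertex reaches at most three of them.

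The pendant-vertex step has the same defect. Let $d(u)=1$, and let its neighbour $v$ have $3$-neighbours $x,y$ with $f(vx)=1$, $f(vy)=2_a$, and the four further edges at $x$ and $y$ carrying the other four induced colors. Swapping the colors of $vx$ and $vy$ leaves the set of colors seen by $uv$ unchanged, so your recoloring lemma does nothing. One would have to change colors further out, with no evident stopping point.

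The paper avoids this by strengthening the induction hypothesis to \emph{good} colorings (Condition I: a $2$-vertex that sees color $1$ does not see all five induced colors). Since $v$ is a $2$-vertex of $G-u$, the pendant edge then always has a free color. Most of the case analysis in Lemmas~\ref{mindegree Thm1}--\ref{no220} is spent re-establishing Condition I after each extension.

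Without such a strengthened hypothesis and actual proofs of the reductions, your Step 1 is a wish list. Moreover, the configuration your counting relies on, (C1), is stronger than anything the paper proves.
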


\begin{theorem}\label{thm2}
Every subcubic planar graph with girth at least $16$ has a $(1,2^4)$-packing edge-coloring.   
\end{theorem}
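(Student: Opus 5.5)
The statement to prove is Theorem~\ref{thm2}. I would use the standard minimal counterexample plus discharging scheme. Let $G$ be a subcubic planar graph of girth at least $16$ that has no $(1,2^4)$-packing edge-coloring, chosen with $|V(G)|+|E(G)|$ minimum. Call the matching color $a$ and the induced-matching colors $1,2,3,4$. Then $G$ is connected. Since every subcubic tree is $(1,2^4)$-packing edge-colorable~\cite{HLL1}, $G$ contains a cycle.

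\textbf{Step 1 (reducible configurations).} I would first show that $G$ has no $1$-vertex. Delete a pendant edge $uv$ with $d(v)=1$ and color $G-v$ by minimality. Then $uv$ conflicts with at most $2$ edges for color $a$ and at most $2+4=6$ edges for the strong colors. This is not immediately enough, so I would instead recolor locally or use a longer structure. The cleaner route is to establish several facts about the $2$-vertices:
\begin{enumerate}[label=(\roman*)]
\item $G$ has no $1$-vertex.
\item No two $2$-vertices are adjacent.
\item A $3$-vertex has at most one (or a bounded number of) $2$-neighbours in a specified configuration.
\item Long ``threads'' of the form $3$--$2$--$3$--$2$ are excluded.
\end{enumerate}

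Each fact is proved the same way. Delete a small subgraph $H$, for instance a $2$-vertex together with its incident edges, or a path through consecutive low-degree vertices. Color $G-H$ by minimality. Then extend to $E(H)$ by counting, for each uncolored edge, the available list among the $5$ colors. Here color $a$ is blocked only by adjacent edges, while color $i\le 4$ is blocked by any edge within distance $2$. When greedy extension fails, I would permute the induced colors locally or swap an edge colored $a$ on the boundary. The girth assumption guarantees that the distance-$2$ neighbourhood of $H$ is a tree, so these counts are exact and no unexpected coincidences occur. Each uncolored edge then sees a predictable number of colored edges, and the extension reduces to a small list-coloring problem on a path or star, whose solvability can be checked by Hall-type arguments.

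\textbf{Step 2 (discharging).} Since girth is at least $16$, Euler's formula gives
\[
\sum_v (5d(v)-16)+\sum_f (d(f)-8)\cdot 2 < 0 .
\]
More conveniently, I would use the vertex-only form: planarity with girth $g$ forces
\[
\operatorname{mad}(G)<\frac{2g}{g-2}=\frac{16}{7}.
\]
So I would assign charge $d(v)-\frac{16}{7}$ to each vertex. A $3$-vertex then has charge $\frac{5}{7}$ and a $2$-vertex has charge $-\frac{2}{7}$. The rule is that each $3$-vertex sends charge along threads to nearby $2$-vertices. The reducible configurations from Step 1 should guarantee that each $2$-vertex receives at least $\frac{2}{7}$, for instance $\frac{1}{7}$ from each end of its thread. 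They should also guarantee that no $3$-vertex gives more than $\frac{5}{7}$. That gives final charges all nonnegative, contradicting negative total charge.

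\textbf{Main obstacle.} The hard part is Step 1: proving that the configurations needed for discharging are actually reducible with only four induced colors. A $3$-vertex adjacent to several $2$-vertices has edges that see up to $10$ edges within distance $2$, which exceeds the $4$ strong colors plus the matching color. So the reduction must exploit recoloring. I would first try a Kempe-like swap, exchanging color $a$ with a strong color along a boundary edge whose far side has spare colors. I would also choose which edges receive $a$ to maximize the freedom of the remaining edges. The girth $16$ bound (density below $\frac{16}{7}$) presumably calibrates exactly how many $2$-vertices a $3$-vertex must support, so I would tune the configuration list to match this density.
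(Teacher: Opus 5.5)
\textbf{There is a genuine gap.} Your proposal is a template rather than a proof, and at both load-bearing points it lacks the idea that makes the argument go through.

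\textbf{Step~1: the pendant-edge reduction.} You correctly notice that extending a coloring of $G-v$ to a pendant edge $uv$ fails, but you give no mechanism that provably repairs it. Suppose the other two edges at $u$ are colored $a$ and $1$. The other edges at the far end of the $a$-edge can carry $2,3$, and those at the far end of the $1$-edge can carry $a,4$. Then $uv$ has no available color, and an arbitrary coloring of $G-v$ gives you no control over this. Local permutations or Kempe-type swaps are not guaranteed to help.

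The paper's key move is to strengthen the induction hypothesis. It proves the existence of a \emph{good} coloring, which satisfies three extra conditions:
\begin{enumerate}[label=(\alph*)]
\item it maximizes the number of matching-colored edges;
\item every $2$-vertex that sees the matching color misses some induced color;
\item the two end vertices of a $2$-thread never see the same pair of induced colors on their outer edges when all four outer edges are induced-colored.
\end{enumerate}
Condition (b) is exactly what rules out the situation above. In $G-v$, the vertex $u$ is a $2$-vertex that sees $a$, so some induced color is free for $uv$. Most of the paper's work is checking that every reduction preserves all three conditions.

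Your list of configurations is also not the right one. ``No two adjacent $2$-vertices'' is not established, and the proof has to live with $2$- and $3$-threads. What is actually proved is: no $1$-vertex, no $4$-chain, no $3$-vertex on two $3$-threads, and no $3$-vertex on both a $3$-thread and a $2$-thread.

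\textbf{Step~2: the discharging calibration.} Sending $\frac{1}{7}$ from each end of a thread requires that no $3$-vertex reaches six $2$-vertices along threads. So you would additionally need that a $3$-vertex incident with three $2$-threads is reducible. Nothing in your plan provides this, and it is not among the configurations above. In fact, the graph obtained from any cubic graph by subdividing every edge twice avoids all of those configurations and has $\mathrm{mad}=\frac{9}{4}<\frac{16}{7}$. So with the provable configurations, any vertex-only discharging stops at $\mathrm{mad}<\frac{9}{4}$, i.e.\ girth $18$; the paper itself remarks this.

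To reach girth $16$, the paper uses planarity beyond the $\mathrm{mad}$ bound, via face charging:
\begin{enumerate}[label=(\roman*)]
\item It assigns charges $7d(v)-16$ to vertices and $d(f)-16$ to faces, which total $-32$ by Euler's formula.
\item From the thread lemmas, every six consecutive vertices of a facial cycle include two $3$-vertices. Hence a face of length $\ell$ has at least $\lceil \ell/3\rceil$ vertices of degree $3$.
\item Each $3$-vertex sends $\frac{5}{3}$ to each incident face, and each face sends $1$ to each incident $2$-vertex.
\item Every face of length $\ell\ge 16$ then ends with at least $\frac{8}{3}\lceil \ell/3\rceil-16\ge 0$.
\end{enumerate}
This face step is where the argument exploits that twice-subdivided planar cubic graphs have girth at most $15$, something no $\mathrm{mad}$-only argument can see.
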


Explicit planar constructions in Section~\ref{examples} give
the corresponding lower bounds. In fact, we showed results in forms of maximum average degree (mad). We proved every subcubic graph $G$ with $mad(G) < \frac{12}{5}$ is $(1,2^5)$-packing edge-colorable. Furthermore, we showed every subcubic graph $G$ with $mad(G) < \frac{9}{4}$ is $(1,2^4)$-packing edge-colorable.

\section{Lower Bounds for $k_1$ and $k_2$}\label{examples}

\begin{example}\label{example1}
Let $G_1$ be the graph shown in Figure \ref{counterexample_1}. Note that $G_1$ is subcubic, planar, and has girth $4$. We show that $G_1$ has no $(1, 2^5)$-packing edge-coloring.
\end{example}

\begin{proof}
Note that $G_1$ has $11$ vertices and $16$ edges. Thus, a maximum matching can have size at most $\lfloor \frac{11}{2} \rfloor = 5$. 

One can easily see that every induced matching in $G_1$ has size at most $2$.  Therefore, each $2$-color can be used at most twice. Since a $1$-color and five $2$-colors can cover at most $5 + 5 \times 2 = 15$ edges, and $G_1$ has $16$ edges, it has no $(1, 2^5)$-packing edge-coloring.
\end{proof}

\begin{figure}[htbp]
    \centering
    \begin{tikzpicture}[
        scale=0.65,
        baseline=(current bounding box.center),
        vertex/.style={circle, fill=black, inner sep=1.3pt}
    ]
        \node[vertex] (u1) at (-1, 1) {};  \node[left=0.08cm]  at (u1) {$u_1$};
        \node[vertex] (u2) at (1, 1) {};   \node[right=0.08cm] at (u2) {$u_2$};
        \node[vertex] (u3) at (1, -1) {};  \node[right=0.08cm] at (u3) {$u_3$};
        \node[vertex] (u4) at (-1, -1) {}; \node[left=0.08cm]  at (u4) {$u_4$};

        \node[vertex] (v1) at (-2, 2) {};     \node[above left] at (v1) {$v_1$};
        \node[vertex] (v2) at (0, 2.8) {};    \node[above]      at (v2) {$v_2$};
        \node[vertex] (v3) at (2, 1.5) {};    \node[above right] at (v3) {$v_3$};
        \node[vertex] (v4) at (2, -1.5) {};   \node[below right] at (v4) {$v_4$};
        \node[vertex] (v5) at (0, -2.8) {};   \node[below]      at (v5) {$v_5$};
        \node[vertex] (v6) at (-2, -2) {};    \node[below left] at (v6) {$v_6$};
        \node[vertex] (v7) at (-3.2, 0) {};   \node[left]       at (v7) {$v_7$};

        \draw (u1) -- (u2) -- (u3) -- (u4) -- (u1); 
        \draw (v1) -- (v2) -- (v3) -- (v4) -- (v5) -- (v6) -- (v7) -- (v1); 
        \draw (u1) -- (v1) (u2) -- (v3) (u3) -- (v4) (u4) -- (v6); 

        \draw (v2) .. controls (4.6, 3.5) and (4.6, -3.5) .. (v5);

        \node[font=\Large] at (0, -4.5) {$G_1$};
    \end{tikzpicture}
    \hspace{1.2cm}
    \begin{tikzpicture}[
        scale=0.48,
        baseline=(current bounding box.center),
        vertex/.style={circle, fill=black, inner sep=1.3pt}
    ]
        \node[vertex] (u1) at (0, 1.5) {};    \node[right=0.08cm]        at (u1) {$u_1$};
        \node[vertex] (u2) at (1.5, 0.4) {};  \node[above right=0.04cm] at (u2) {$u_2$};
        \node[vertex] (u3) at (0.9, -1.4) {}; \node[right=0.08cm]        at (u3) {$u_3$};
        \node[vertex] (u4) at (-0.9, -1.4) {};\node[left=0.08cm]         at (u4) {$u_4$};
        \node[vertex] (u5) at (-1.5, 0.4) {}; \node[above left=0.04cm]  at (u5) {$u_5$};

        \node[vertex] (v1) at (0, 3.2) {};    \node[above=0.08cm]        at (v1) {$v_1$};
        \node[vertex] (v2) at (2.8, 2.9) {};  \node[right=0.08cm]        at (v2) {$v_2$};
        \node[vertex] (v3) at (3.8, 0.4) {};  \node[right=0.08cm]        at (v3) {$v_3$};
        \node[vertex] (v4) at (4.5, -2.0) {}; \node[right=0.08cm]        at (v4) {$v_4$};
        \node[vertex] (v5) at (2.2, -3.5) {}; \node[right=0.08cm]        at (v5) {$v_5$};
        \node[vertex] (v6) at (0, -4.8) {};   \node[below=0.08cm]        at (v6) {$v_6$};
        \node[vertex] (v7) at (-2.2, -3.5) {};\node[left=0.08cm]         at (v7) {$v_7$};
        \node[vertex] (v8) at (-4.5, -2.0) {};\node[left=0.08cm]         at (v8) {$v_8$};
        \node[vertex] (v9) at (-3.8, 0.4) {}; \node[left=0.08cm]         at (v9) {$v_9$};
        \node[vertex] (v10) at (-2.8, 2.9) {};\node[above left=0.04cm]  at (v10) {$v_{10}$};

        \draw (u1) -- (u2) -- (u3) -- (u4) -- (u5) -- (u1); 
        \draw (v10) -- (v1) -- (v2) -- (v3) -- (v4) -- (v5) -- (v6) -- (v7) -- (v8) -- (v9) -- (v10); 
        \draw (u1) -- (v1);
        \draw (u2) -- (v3);
        \draw (u3) -- (v5);
        \draw (u4) -- (v7);
        \draw (u5) -- (v9);

        \draw (v8) .. controls (-4.0, -7.0) and (4.0, -7.0) .. (v4);
        
        \node[font=\Large] at (0, -6.8) {$G_2$};
    \end{tikzpicture}
    \vspace{-2mm}
    \caption{Subcubic planar graphs $G_1,G_2$ with girth $4,5$ and has no $(1, 2^5)$- and no $(1, 2^4)$-packing edge-coloring respectively.}
    \label{counterexample_1}
\end{figure}

\begin{example}
Let $G_2$ be the graph shown in Figure \ref{counterexample_1}. Note that $G_2$ is subcubic, planar, and has girth $5$. We show that $G_2$ has no $(1, 2^4)$-packing edge-coloring.
\end{example}

\begin{proof}
By the definition of a $(1, 2^4)$-packing edge coloring, the $1$-color can be used either exactly once or exactly twice on the 5-cycle $C = u_1u_2u_3u_4u_5u_1$.

\textbf{Case 1:} The $1$-color appears exactly once on the cycle $C$. By symmetry, the edge colored with $1$ can be $u_3u_4$ or $u_4u_5$ or $u_1u_5$. We obtain a contradiction in each case. 

\textbf{Case 1.1:} $u_3u_4$ is colored with $1$. We may assume $u_4u_5, u_1u_5, u_1u_2$, and $u_2u_3$ are colored with $2_1, 2_2, 2_3$, and $2_4$, respectively. Then $u_3v_5$, and $u_4v_7$ must be colored with $2_2, 2_3$. The edge $u_2v_3$ can only use $1$ or $2_1$. 
In the former subcase, $v_2v_3, v_3v_4$ must be colored with $2_2, 2_1$. Then $v_5v_6$ must be colored with $1$, and $v_4v_5$ must be colored with $2_3$. This forces $v_4v_8, v_6v_7$ to use $1, 2_4$. However, $v_8v_9$ has no color to use. This is a contradiction.
In the latter subcase, $v_3v_4$ can only use $1$. Then $v_4v_5, v_4v_8$ must be colored with $2_3, 2_4$. Then $v_6v_7$ can only use $1$. This forces $v_5v_6, v_7v_8$ to be colored with $2_1, 2_2$, respectively. Consequently, $v_8v_9$ can only use $1$. However, $u_5v_9$ has no color to use. This is again a contradiction.

\textbf{Case 1.2:} $u_4u_5$ is colored with $1$. We may assume $u_1u_5, u_1u_2, u_2u_3$, and $u_3u_4$ are colored with $2_1, 2_2, 2_3$, and $2_4$. Then $u_2v_3, u_4v_7$, and $u_5v_9$ must be colored with $1, 2_2$, and $2_3$. The edge $u_3v_5$ can only use $1$ or $2_1$.
In the former subcase, $v_4v_5$ and $v_5v_6$ must be colored with $2_2$ and $2_1$. Then $v_3v_4$ can only use $2_4$. This forces $v_4v_8$ to be colored with $1$. However, $v_7v_8$ has no color to use. This is a contradiction.
In the latter subcase, $v_5v_6$ and $v_4v_5$ must be colored with $1$ and $2_2$. Then $v_3v_4$ can only use $2_4$. This forces $v_4v_8$ to use $1$. However, $v_8v_9$ has no color to use. This is again a contradiction.

\textbf{Case 1.3:} $u_1u_5$ is colored with $1$. We may assume $u_1u_2, u_2u_3, u_3u_4$, and $u_4u_5$ are colored with $2_1, 2_2, 2_3$, and $2_4$. Then $u_3v_5, u_5v_9$ must be colored with $1, 2_2$. The edge $u_4v_7$ can only use $1$ or $2_1$.
In the former subcase, $v_6v_7$ and $v_7v_8$ must be colored with $2_2$ and $2_1$. This forces $v_4v_5$ and $v_5v_6$ to be colored with $2_1$ and $2_4$. However, $v_4v_5$ and $v_7v_8$ are both colored with $2_1$. This is a contradiction.
In the latter subcase, $v_7v_8$ and $v_6v_7$ must be colored with $1$ and $2_2$. This forces $v_5v_6$ to be colored with $2_4$. However, neither $v_4v_8$ nor $v_8v_9$ can use $2_4$. This is again a contradiction.

\textbf{Case 2:} The $1$-color appears exactly twice on $C$. By symmetry, the edges colored with $1$ can be $u_3u_4$ and $u_1u_5$, $u_2u_3$ and $u_4u_5$, or $u_1u_2$ and $u_4u_5$. Since our proof does not use the edge $v_4v_8$, all three cases are symmetric. We may assume $u_3u_4$ and $u_1u_5$ are colored with $1$, and $u_1u_2, u_2u_3$, and $u_4u_5$ are colored with $2_1, 2_2$, and $2_3$. Then $u_1v_1$ and $u_3v_5$ must both be colored with $2_4$. The edge $u_2v_3$ can only use $1$ or $2_3$. In the former subcase, neither $v_2v_3$ nor $v_3v_4$ can use $2_4$. This is a contradiction. In the latter subcase, both $v_2v_3$ and $v_3v_4$ have $1$ as the only available color. This is again a contradiction. 
\end{proof}

\section{Proof of Theorem~\ref{thm1}}
A {\em $k$-chain} is a path on $k$ vertices such that each vertex has degree exactly $2$ in $G$. A {\em $k$-thread} is a path with end vertices of degree at least three, and the internal vertices forms a $k$-chain. A vertex of degree $d$ is denoted by {\em $d$-vertex}. We say a vertex $u$ {\em sees} a $1$-color $c_1$ if an edge incident on $u$ is using $c_1$. We say a vertex $u$ {\em sees} a $2$-color $c_2$ if an edge $e = u_1u_2$ is assigned with $c_2$ and either $u_1$ or $u_2$ are at distance at most one from $u$. We say an edge $e$ {\em sees} a $1$-color $c_1$ ($2$-color $c_2$) if there is an edge colored with $c_1$ ($c_2$) and it is within distance one (two) from $e$.

\begin{figure}[ht]
 \begin{center}
    \hfill
   \begin{tikzpicture}[
       scale=0.5,
       baseline=0,
       dot/.style={circle, draw=black, fill=black, inner sep=0pt, minimum size=3.2pt},
       elabel/.style={inner sep=1.5pt}
   ]
       \draw[semithick] (-1.5,0) -- (4.5,0);

       \node[dot] at (0,0) {};
       \node[dot] at (1.5,0) {};
       \node[dot] at (3.0,0) {};
   \end{tikzpicture} \hfill
   \begin{tikzpicture}[
       scale=0.5,
       baseline=0,
       dot/.style={circle, draw=black, fill=black, inner sep=0pt, minimum size=3.2pt},
       elabel/.style={inner sep=1.5pt}
   ]
       \coordinate (h0) at (0,0);
       \coordinate (h1) at (1.5,0);
       \coordinate (h2) at (3.0,0);
       \coordinate (h3) at (4.5,0);
       \coordinate (h4) at (6.0,0);

       \path (h0) ++(135:1.5) coordinate (left_up);
       \path (h0) ++(225:1.5) coordinate (left_down);

       \path (h4) ++(45:1.5) coordinate (right_up);
       \path (h4) ++(-45:1.5) coordinate (right_down);

       \draw[semithick] (left_up) -- (h0) -- (left_down);
       \draw[semithick] (h0) -- (h4);
       \draw[semithick] (right_up) -- (h4) -- (right_down);

       \node[dot] at (left_up) {};
       \node[dot] at (left_down) {};

       \node[dot] at (h0) {};
       \node[dot] at (h1) {};
       \node[dot] at (h2) {};
       \node[dot] at (h3) {};
       \node[dot] at (h4) {};

       \node[dot] at (right_up) {};
       \node[dot] at (right_down) {};
   \end{tikzpicture} 
   \hfill\,
\caption{Examples: a $3$-chain and a $3$-thread}
\end{center}
\vspace{-8mm}
\label{3-chain and 3-thread}
\end{figure}

We use $1,2_1, 2_2, 2_3, 2_4, 2_5$ to denote the $1$-color and the five $2$-colors. 
In order to extend the coloring when proving that $G$ contains no $1$-vertices, we introduce a good coloring condition as follows.
In this section, we say that a $(1,2^5)$-packing edge-coloring is {\em good} if it further satisfies the following condition:

\vspace{2mm}

\textbf{Condition I:} for each $2$-vertex $u$, if $u$ sees $1$-color, then $u$ does not see all the five $2$-colors $2_1, 2_2, 2_3, 2_4, 2_5$. 

\begin{figure}[ht]
\centering
\begin{tikzpicture}[scale=0.7,
    vertex/.style={circle, fill=black, inner sep=1.5pt},
    every node/.style={font=\small}
]

\node[vertex] (x) at (-2,0) {};
\node[vertex] (u) at (0,0) {};
\node[vertex] (y) at (2,0) {};

\node[vertex] (a) at (-3,1.4) {};
\node[vertex] (b) at (-3,-1.4) {};
\node[vertex] (c) at (3,1.4) {};
\node[vertex] (d) at (3,-1.4) {};

\node[below=3pt] at (x) {$x$};
\node[below=3pt] at (u) {$u$};
\node[below=3pt] at (y) {$y$};

\node[left=3pt] at (a) {$x_1$};
\node[left=3pt] at (b) {$x_2$};
\node[right=3pt] at (c) {$y_1$};
\node[right=3pt] at (d) {$y_2$};

\draw (x) -- node[above] {\textcolor{blue}{$2_1$}} (u);
\draw (u) -- node[above] {\textcolor{blue}{$1$}} (y);

\draw (x) -- node[left] {\textcolor{blue}{$2_2$}} (a);
\draw (x) -- node[left] {\textcolor{blue}{$2_3$}} (b);

\draw (y) -- node[right] {\textcolor{blue}{$2_4$}} (c);
\draw (y) -- node[right] {\textcolor{blue}{$2_5$}} (d);
\end{tikzpicture}
\caption{The $2$-vertex $u$ sees the $1$-color and all five $2$-colors
$2_1,2_2,2_3,2_4,2_5$, violating Condition~I.
Thus, the $(1,2^5)$-packing edge-coloring is not good.}
\label{fig:not-good-1-thread}
\end{figure}
\vspace{2mm}

We show the following stronger theorem, which implies Theorem~\ref{thm1}.

\begin{thm}\label{Thm 1}
Every subcubic graph $G$ with  $\text{mad}(G) < \frac{12}{5}$ has a good coloring.    
\end{thm}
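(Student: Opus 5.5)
We argue by the standard minimal-counterexample and discharging method. Let $G$ be a counterexample to Theorem~\ref{Thm 1} minimizing $|V(G)|+|E(G)|$. Every subgraph $H$ of $G$ satisfies $\text{mad}(H)\le \text{mad}(G)<\frac{12}{5}$. Hence every proper subgraph has a good coloring, and we try to extend it to $G$. The extension must respect both the packing constraints and Condition~I, the latter both at the new $2$-vertices and at $2$-vertices whose view changes. Clearly $G$ is connected. Some configurations are then shown to be reducible, and a discharging argument shows that a graph with $\text{mad}<\frac{12}{5}$ must contain one of them.

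\textbf{Reducible configurations.} We would establish, roughly in this order:
\begin{enumerate}[label=(C\arabic*)]
\item $G$ has no $1$-vertex. Delete the pendant edge $uv$ and color $G-u$. The edge $uv$ sees at most two edges in its distance-$1$ neighborhood and at most $2+4=6$ edges at distance $2$. Condition~I at the $2$-vertices near $v$ is exactly what guarantees a free color, which is why Condition~I was introduced. Afterwards we must recheck Condition~I at $v$ if $v$ has become a $2$-vertex, and at its neighbors. In the tight case we recolor the edge at $v$ using the matching color or swap two $2$-colors.
\item $G$ has no $k$-chain for a suitable $k$, presumably $k\ge 3$ or $k\ge 4$. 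Remove the internal vertices of the thread and color the chain greedily from both ends. Long chains have much freedom, since an interior edge sees only four other edges and the $1$-color can be alternated.
\item A $3$-vertex incident to two long threads (say $2$-threads), or to threads whose total length is too large, is reducible. The same holds for certain $3$-vertices adjacent to a $3$-vertex carrying long threads. These are proved by deleting the $3$-vertex together with its incident threads, coloring the rest, and extending by a small case analysis on the colors seen at the three endpoints. The matching color $1$ serves as the flexible color: on a thread it can typically be placed on every other edge.
\end{enumerate}

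\textbf{Discharging.} Give each vertex $v$ charge $\mu(v)=d(v)-\frac{12}{5}$, so that $\sum_v \mu(v)<0$. A $2$-vertex has charge $-\frac{2}{5}$ and a $3$-vertex has charge $\frac{3}{5}$. We use the rule that each $3$-vertex sends $\frac{1}{5}$ along each incident thread to every $2$-vertex on it, with each $2$-vertex on a thread receiving from both ends. With this rule a $2$-vertex needs two donations of $\frac15$. A $3$-vertex can afford three such donations, so threads of length $\le 1$ are free. Longer threads need the configurations (C2)--(C3) to forbid too many $2$-vertices near one $3$-vertex; alternatively $3$-vertices with short threads pass charge to neighboring $3$-vertices. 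The bound $\frac{12}{5}$ is exactly what makes a structure with every $3$-vertex carrying $3/2$ degree-$2$ vertices on average balanced. We then verify that every vertex ends with nonnegative charge, which contradicts $\sum\mu<0$.

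\textbf{Main obstacle.} The hardest step is the reducibility proofs in (C2)--(C3), in particular maintaining Condition~I. Deleting and recoloring can make a previously fine $2$-vertex see the $1$-color plus all five $2$-colors. Each extension therefore needs an extra step: if the greedy choice violates Condition~I at some $2$-vertex, we swap the color on a thread edge with $1$, or permute two $2$-colors on the new edges. I would first try to always reserve color $1$ for the middle edge of each reinserted thread, which limits how many $2$-colors each thread vertex sees. If that fails, I would fall back on an exhaustive case analysis on the colors at the thread endpoints.
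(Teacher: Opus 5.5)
Your framework matches the paper's. You use a minimal counterexample, rule out $1$-vertices with the help of Condition~I, take initial charge $d(v)-\frac{12}{5}$, and let each $3$-vertex send $\frac15$ to every $2$-vertex it reaches through $2$-vertices. The gap is that the part carrying the content is left undetermined.

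With this rule the count closes exactly when every $3$-vertex reaches at most three $2$-vertices. So you must fix and prove a concrete list, namely:
\begin{enumerate}
\item no $3$-chain;
\item a $3$-vertex on a $2$-thread has no two further $2$-neighbours, which kills the thread patterns $(2,1,1)$, $(2,2,1)$ and $(2,2,2)$;
\item a $3$-vertex on two $2$-threads has no $3$-neighbour, which kills the pattern $(2,2,0)$.
\end{enumerate}
Your (C2) leaves $k$ open, and ``$k\ge 4$'' would not suffice. A $3$-vertex on a $3$-thread and a $1$-thread already gives away $\frac45>\frac35$, so you would need a further family of reductions. Your (C3) gestures at ``total length too large'' without fixing the threshold. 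It also adds configurations (a $3$-vertex next to a $3$-vertex carrying long threads) that play no role. So the final claim that every vertex ends nonnegative is not actually verified.

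More importantly, none of the reductions is proved, and this is exactly where Condition~I makes things delicate. The paper deletes only the middle edge $v_1v_2$ of the $2$-thread $uv_1v_2v_3$ (or the middle vertex of a $3$-chain). The rest of the coloring survives, and $v_1v_2$ sees at most six edges. The easy cases are settled by putting $1$ or a free $2$-color on $v_1v_2$.

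The hard case is when $f(uv_1)=f(v_2v_3)$ is the same $2$-color and $1$ already appears at both $u$ and $v_3$. Then $uv_1$ must be recolored, which changes what the $2$-neighbours of $u$ see. This forces further switches, for example exchanging the colors of $uw_1$ and $w_1w_2$ and moving $1$ onto $uv_1$. In the $(2,2,0)$ configuration the paper must also first pin down the colors at the $3$-neighbour $u_1$ and along the second $2$-thread.

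Your sketch offers three options: a ``small case analysis'', reserving $1$ for the middle edge, or deleting the $3$-vertex with all its threads (which only enlarges the simultaneous extension problem). None comes with an argument that an extension exists in these tight cases or that Condition~I survives at the affected $2$-vertices. As it stands the proposal is a template rather than a proof.
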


By a standard application of Euler's formula, every planar graph with girth at least $g$ satisfies $\text{mad}(G) < \frac{2g}{g-2}$. We obtain the following result as a corollary of Theorem~\ref{Thm 1}.

\begin{cor}
Every subcubic planar graph with girth at least $12$ has a good coloring.    
\end{cor}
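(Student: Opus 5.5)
The corollary follows directly from Theorem~\ref{Thm 1}; the only thing to check is the maximum average degree bound. Let $G$ be a subcubic planar graph with girth at least $12$, and let $H$ be any subgraph of $G$. Then $H$ is planar and has girth at least $12$ (or is a forest). It suffices to show that $2|E(H)|/|V(H)| < \frac{12}{5}$ for every such $H$; then $\text{mad}(G) < \frac{12}{5}$ and Theorem~\ref{Thm 1} gives a good coloring of $G$.

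\emph{Step 1: reduce to connected $H$.} The average degree of a disjoint union is at most the maximum of the average degrees of its components, so we may assume $H$ is connected.

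\emph{Step 2: the forest case.} If $H$ is a forest, then $|E(H)| \le |V(H)|-1$. Hence the average degree is less than $2 < \frac{12}{5}$.

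\emph{Step 3: Euler's formula when $H$ has a cycle.} Fix a plane embedding of $H$ with $n$ vertices, $m$ edges and $f$ faces. Every face boundary walk has length at least $12$. Each edge contributes exactly two to the total length of all boundary walks, counting bridges twice within a single face, so $2m \ge 12 f$. Combining this with Euler's formula $n - m + f = 2$ gives
\[
m \le \frac{12}{10}(n-2) < \frac{6}{5}n .
\]
Therefore $2m/n < \frac{12}{5}$. This is the bound $\frac{2g}{g-2}$ cited in the paper, taken at $g=12$.

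The only delicate point is in Step 3: one must justify that each face has boundary length at least the girth even when $H$ has bridges or cut vertices. This holds because the boundary walk of every face of a graph containing a cycle contains a cycle. The subcubic hypothesis plays no role here beyond being inherited by Theorem~\ref{Thm 1}.
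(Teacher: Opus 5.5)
Your proposal is correct and follows the paper's route exactly: the paper invokes the standard Euler-formula bound $\text{mad}(G) < \frac{2g}{g-2}$, which equals $\frac{12}{5}$ at $g=12$, and then applies Theorem~\ref{Thm 1}. You additionally supply the details the paper leaves implicit: the reduction to connected subgraphs, the forest case, and why every face boundary of a plane graph containing a cycle has length at least the girth. All of these are sound.
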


\textbf{Proof of Theorem~\ref{Thm 1}:} Suppose not. Let $G$ be a subcubic graph with  $\text{mad}(G) < \frac{12}{5}$ and has no good coloring. We further require that $G$ has the minimum $|V(G)| + |E(G)|$. 

Ruling out $1$-vertices is the most crucial structural foundation for our discharging argument. However, since the extended coloring must strictly preserve Condition I at all $2$-vertices, a detailed case analysis is required.

\begin{lemma}\label{mindegree Thm1}
$\delta(G) \ge 2$.    
\end{lemma}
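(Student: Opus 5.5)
We argue by contradiction with the minimality of $G$. First, $G$ is connected and has no isolated vertex: otherwise a component, or $G$ minus the isolated vertex, is a smaller graph with $\text{mad} < \frac{12}{5}$. Its good coloring, applied componentwise, is a good coloring of $G$, since Condition I is local. So suppose $v$ is a $1$-vertex with unique neighbor $u$, and let $G' = G - v$. Then $G'$ is subcubic with $\text{mad}(G') \le \text{mad}(G) < \frac{12}{5}$ and $|V(G')|+|E(G')|$ is smaller, so $G'$ has a good coloring $\varphi$. We will extend $\varphi$ to $uv$, recoloring a bounded number of nearby edges if necessary, so that the result is a $(1,2^5)$-packing edge-coloring satisfying Condition I. That contradicts the choice of $G$.

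The extension splits by $d_G(u) \in \{1,2,3\}$. If $d_G(u)=1$, then $G$ is a single edge, which is trivially good. If $d_G(u)=2$, then $u$ has one other neighbor $w$, and $u$ is a $1$-vertex in $G'$. The edge $uv$ sees only $uw$ and the at most two edges at $w$ within distance one, and the edges at distance two from $w$. Concretely, $uv$ sees the $1$-color only through $uw$. It sees at most $1 + 2 + 4 = 7$ edges within distance two, but the edges at distance two lie only at the neighbors of $w$. So a direct count, together with the fact that Condition I held at the $2$-vertex $w$ (if $d(w)=2$), should leave a free color.

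The main point is to give $uv$ a color that also keeps Condition I at the new $2$-vertex $u$ and at $2$-vertices within distance two of $uv$. Here $u$ sees colors on $uv$, $uw$ and the edges at $w$, which number at most $2+2=4$ edges. So $u$ cannot see all five $2$-colors plus the $1$-color unless $uw$ has color $1$ and exactly four other distinct $2$-colors appear. I will treat that subcase by choosing $uv$'s color to repeat a color already seen, or by recoloring $uw$.

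The case $d_G(u)=3$, with neighbors $v,w_1,w_2$, is the main obstacle. Here $uv$ is within distance two of $uw_1$, $uw_2$, the up to four edges at $w_1,w_2$, and at distance two the up to eight edges beyond them. So $uv$ may see all six colors, and a counting argument alone fails. Coloring $uv$ also adds a new $2$-color to the view of any $2$-vertex among $w_1,w_2$ and their neighbors, which may break Condition I there.

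My plan for this case is as follows.
\begin{enumerate}
\item Try the $1$-color first. It is available whenever neither $uw_1$ nor $uw_2$ is colored $1$. Coloring $uv$ with $1$ adds only the $1$-color to vertices at distance at most one from $uv$, namely $u$ and $v$. So Condition I is unaffected at $2$-vertices, because $u$ is a $3$-vertex.
\item If, say, $uw_1$ has color $1$, first try to swap the color of $uw_1$ into a $2$-color that it does not see. This is possible because Condition I at $w_1$ (when $d(w_1)=2$) guarantees that some $2$-color is missing near $w_1$. After the swap, give $uv$ the color $1$.
\item Otherwise, exploit the missing $2$-color guaranteed by Condition I at $2$-vertices among $w_1,w_2$. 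If both are $3$-vertices, use a Kempe-type or single-edge recoloring of an edge at distance two.
\end{enumerate}
In every recoloring, I will recheck Condition I at each $2$-vertex within distance two of the changed edges. This bounded but tedious case analysis, organized by the degrees of $w_1,w_2$ and by which edge carries color $1$, is the expected bulk of the proof.
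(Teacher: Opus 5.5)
There is a genuine gap, and it is in your case $d_G(u)=3$. You never use the fact that $u$ is a $2$-vertex of $G'=G-v$, so Condition~I applies to $u$ itself in $G'$. By the paper's definitions of ``sees'', the pendant edge $uv$ sees exactly the colors that the vertex $u$ sees in $G'$: the $1$-color only on $uw_1,uw_2$, and $2$-colors only on edges incident to $u$, $w_1$ or $w_2$. Condition~I at $u$ in $G'$ therefore says precisely that $uv$ never sees all six colors, which is why the paper introduces Condition~I for the $1$-vertex lemma.

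Your neighborhood count is also off by one level. The edges at distance two from $uv$ are only the at most four edges at $w_1,w_2$; the ``eight edges beyond them'' are at distance three and impose nothing. Likewise, for $d_G(u)=2$ the edge $uv$ sees at most three edges, not seven. And only $w_1,w_2$ can see a new $2$-color placed on $uv$, not their neighbors.

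So your assertion that $uv$ ``may see all six colors'' is false. When $w_1,w_2$ are both $3$-vertices, the paper simply gives $uv$ an available color, and no $2$-vertex is affected. Your fallback for exactly this configuration, a ``Kempe-type or single-edge recoloring of an edge at distance two'', specifies no recoloring and shows neither that one exists nor that it preserves Condition~I. Without the observation above, that configuration is left open.

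Your step~2 is also unjustified. Condition~I at $w_1$ only controls edges at $u$, $w_1$ and the far neighbor $x$ of $w_1$, whereas $uw_1$ additionally sees the edges at $w_2$. So a $2$-color missing around $w_1$ need not be free for $uw_1$, and recoloring $uw_1$ can break Condition~I at $w_2$ or $x$.

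Once availability is settled, the work that remains is to keep Condition~I at a $2$-vertex $w_i$ when $uv$ receives a $2$-color. The paper does this concretely:
\begin{itemize}
\item If $uw_1,uw_2$ both carry $2$-colors, give $uv$ the color $1$.
\item The only delicate situation is $f(uw_1)=1$, $f(uw_2)=2_1$, $w_1$ a $2$-vertex, and $f(w_1x)=2_2$. Either $2_2$ already appears at $w_2$ (or $d(w_2)\le 2$), or it does not. In the first case $uv$ has two available $2$-colors in $\{2_3,2_4,2_5\}$, and at least one of them does not complete the five $2$-colors seen by $w_1$. In the second case, switch the colors of $uw_1$ and $w_1x$ and give $uv$ the color $1$.
\end{itemize}
Your proposal names this difficulty but contains none of these arguments, so as written it does not prove the lemma.
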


\begin{proof}
Suppose not, i.e., there is a $1$-vertex $u$ with the unique neighbour $u_1$. We delete $uu_1$ to obtain a subcubic graph $G'$ with $\text{mad}(G') < \frac{12}{5}$. By the minimality of $G$, $G'$ has a good coloring $f$. We extend $f$ to $G$ to obtain a contradiction. We may assume that $u_1$ is a $3$-vertex, since otherwise we can color $uu_1$ with an available $2$-color. This is a contradiction. Let $N(u_1) = \{u, u_2, u_3\}$. Note that the case where both $u_2$ and $u_3$ are $1$-vertices is trivial, since $uu_1$ would see at most two colors in $G'$.

\textbf{Case 1:} At least one of $u_2, u_3$ is a $2$-vertex. By symmetry, we may assume $u_2$ is a $2$-vertex. Let $N(u_2) = \{u_1, u_4\}$. First, suppose $u_3$ is a $1$-vertex. By symmetry, $f(u_1u_2), f(u_1u_3) = 2_1, 2_2$ or $2_1, 1$ or $1, 2_1$. In the first case, we color $uu_1$ with $1$ to obtain a good coloring. In the middle case, we color $uu_1$ with an available $2$-color to obtain a good coloring. In the last case, we may assume $f(u_2u_4) = 2_2$. We can color $uu_1$ with an available $2$-color. The only bad case is when $u_4$ is a $3$-vertex and $u_2$ sees all $6$ colors. In this case, we recolor $uu_1$ with a different available $2$-color (it originally has at least two available $2$-colors).

Next, suppose $u_3$ is a $2$-vertex. Let $N(u_3) = \{u_1, u_5\}$. By symmetry, $f(u_1u_2), f(u_1u_3) = 2_1, 2_2$ or $1, 2_1$. In the former case, we color $uu_1$ with $1$ to obtain a good coloring. 
In the latter case, We may assume $f(u_2u_4) = 2_2$. We can color $uu_1$ with an available $2$-color. The only bad case is when $u_4$ is a $3$-vertex and $u_2$ sees all $6$ colors. In this case, we recolor $uu_1$ with a different available $2$-color (it originally has at least two available $2$-colors).

Finally, suppose $u_3$ is a $3$-vertex, let $N(u_3) = \{u_1, u_5, u_6\}$. By symmetry, $f(u_1u_2), f(u_1u_3) = 2_1, 2_2$ or $2_1, 1$ or $1, 2_1$. 
In the first case, we color $uu_1$ with $1$ to obtain a good coloring. In the middle case, we color $uu_1$ with an available $2$-color to obtain a good coloring. 
In the last case, We may assume $f(u_2u_4) = 2_2$. We can color $uu_1$ with an available $2$-color. The only bad case is when $u_4$ is a $3$-vertex and $u_2$ sees all $6$ colors. We claim $2_2 \in \{f(u_3u_5), f(u_3u_6)\}$ since otherwise we switch the colors of $u_1u_2$ and $u_2u_4$, and color $uu_1$ with $1$ to obtain a good coloring. However, this means $uu_1$ has two available $2$-colors to use. If an available $2$-color violates Condition I for $u_2$, then use the other $2$-color. This would always guarantee a good coloring.

\textbf{Case 2:} At least one of $u_2, u_3$ is a $3$-vertex. By symmetry, we may assume $u_2$ is a $3$-vertex.  If $u_3$ is a $1$-vertex, we observe that $uu_1$ sees at most four colors, and we can always extend $f$ to $G$ by coloring $uu_1$ with an available color. If $u_3$ is a $2$-vertex, by symmetry, this case has already been discussed in Case 1. If $u_3$ is a $3$-vertex, by the definition of a good coloring, $u_1$ sees at most five colors, and we can always extend $f$ to $G$ (and remain as a good coloring) by coloring $uu_1$ with an available color.
\end{proof}

\begin{figure}[ht]
 \begin{center}
    \hfill
   \begin{tikzpicture}[
       scale=0.45,
       baseline=0,
       dot/.style={circle, draw=black, fill=black, inner sep=0pt, minimum size=3pt},
        execute at begin picture={\def\scriptsize{\fontsize{7}{8.5}\selectfont}},
       elabel/.style={inner sep=1.5pt}
   ]
       \draw[semithick] (-1.5,0) -- (4.5,0);

       \node[dot] at (0,0) {};
       \node[dot] at (1.5,0) {};
       \node[dot] at (3.0,0) {};

       \node[font=\scriptsize, below=1pt] at (0,0) {$v_1$};
       \node[font=\scriptsize, below=1pt] at (1.5,0) {$v_2$};
       \node[font=\scriptsize, below=1pt] at (3.0,0) {$v_3$};
       \node[font=\scriptsize, text=blue, above=0.5pt] at (-0.75,0) {$1$};
       \node[font=\scriptsize, text=red, above=0.5pt] at (0.75,0) {$2_1$};
       \node[font=\scriptsize, text=red, above=0.5pt] at (2.25,0) {$2_2$};
       \node[font=\scriptsize, text=blue, above=0.5pt] at (3.75,0) {$1$};
       \node[font=\footnotesize] at (1.5,-1.5) {(1)};
   \end{tikzpicture} \hfill
   \begin{tikzpicture}[
       scale=0.45,
       baseline=0,
       dot/.style={circle, draw=black, fill=black, inner sep=0pt, minimum size=3pt},
        execute at begin picture={\def\scriptsize{\fontsize{7}{8.5}\selectfont}},
       elabel/.style={inner sep=1.5pt}
   ]
       \coordinate (junction) at (0,0);
       \path (junction) ++(135:1.5) coordinate (d_up);
       \path (junction) ++(225:1.5) coordinate (d_down);
       \coordinate (t1) at (1.5, 0);
       \coordinate (t2) at (3.0, 0);
       \coordinate (t3) at (4.5, 0);
       \coordinate (t4) at (6.0, 0);
       \coordinate (t_end) at (7.5, 0); 
       \coordinate (v_dot) at (4.5, 1.8);
       \coordinate (v_end) at (4.5, 3.0);

       \draw[semithick] (d_up) -- (junction) -- (d_down);
       \draw[semithick] (junction) -- (t_end);
       \draw[semithick] (t3) -- (v_end);

       \node[dot] at (d_up) {};
       \node[dot] at (d_down) {};
       \node[dot] at (junction) {};
       \node[dot] at (t1) {};
       \node[dot] at (t2) {};
       \node[dot] at (t3) {};
       \node[dot] at (t4) {};
       \node[dot] at (v_dot) {};

       \node[font=\scriptsize, above right=0pt] at (d_up) {$v_4$};
       \node[font=\scriptsize, below right=0pt] at (d_down) {$v_5$};
       \node[font=\scriptsize, below=1pt] at (junction) {$v_3$};
       \node[font=\scriptsize, below=1pt] at (t1) {$v_2$};
       \node[font=\scriptsize, below=1pt] at (t2) {$v_1$};
       \node[font=\scriptsize, below=2pt] at (t3) {$u$};
       \node[font=\scriptsize, below=1pt] at (t4) {$w_1$};
       \node[font=\scriptsize, right=1pt] at (v_dot) {$u_1$};
     
\node[font=\scriptsize, text=blue, below right=3pt and -4pt] at (d_up) {$2_3$};
\node[font=\scriptsize, text=blue, above right=3pt and -4pt] at (d_down) {$2_4$};
       \path (junction) -- node[font=\scriptsize, text=blue, above=.8pt] {$1$} (t1);
       \path (t1) -- node[font=\scriptsize, text=red, above=.1pt] {$2_5$} (t2);
       \path (t2) -- node[font=\scriptsize, text=blue, above=.3pt, pos=.38] {$1$} (t3);
       \path (t3) -- node[font=\scriptsize, text=blue, above=.3pt, pos=.5] {$2_2$} (t4);
       \path (t3) -- node[font=\scriptsize, text=blue, left=0.3pt, pos=.60] {$2_1$} (v_dot);
       \node[font=\footnotesize] at (3.0,-2.2) {(2)};
   \end{tikzpicture} \hfill
   \begin{tikzpicture}[
       scale=0.45,
       baseline=0,
       dot/.style={circle, draw=black, fill=black, inner sep=0pt, minimum size=3pt},
        execute at begin picture={\def\scriptsize{\fontsize{7}{8.5}\selectfont}},
       elabel/.style={inner sep=1.5pt}
   ]
       \coordinate (h0) at (0,0);
       \coordinate (h1) at (1.5,0);
       \coordinate (h2) at (3.0,0);
       \coordinate (h3) at (4.5,0);
       \coordinate (h4) at (6.0,0);
       \coordinate (h5) at (7.5,0);
       \coordinate (h6) at (9.0,0);

       \path (h0) ++(135:1.5) coordinate (left_up);
       \path (h0) ++(225:1.5) coordinate (left_down);

       \path (h6) ++(45:1.5) coordinate (right_up);
       \path (h6) ++(-45:1.5) coordinate (right_down);

       \path (h3) ++(90:1.8) coordinate (v_dot);
       \path (v_dot) ++(135:1.5) coordinate (v_left);
       \path (v_dot) ++(45:1.5) coordinate (v_right);

       \draw[semithick] (left_up) -- (h0) -- (left_down);
       \draw[semithick] (h0) -- (h6);
       \draw[semithick] (right_up) -- (h6) -- (right_down);
       \draw[semithick] (h3) -- (v_dot);
       \draw[semithick] (v_left) -- (v_dot) -- (v_right);

       \node[dot] at (left_up) {};
       \node[dot] at (left_down) {};
       \node[dot] at (right_up) {};
       \node[dot] at (right_down) {};
       \node[dot] at (v_left) {};
       \node[dot] at (v_right) {};

       \node[dot] at (h0) {};
       \node[dot] at (h1) {};
       \node[dot] at (h2) {};
       \node[dot] at (h3) {};
       \node[dot] at (h4) {};
       \node[dot] at (h5) {};
       \node[dot] at (h6) {};
       \node[dot] at (v_dot) {};

       \node[font=\scriptsize, above right=0pt] at (left_up) {$v_4$};
       \node[font=\scriptsize, below right=0pt] at (left_down) {$v_5$};
       \node[font=\scriptsize, below=1pt] at (h0) {$v_3$};
       \node[font=\scriptsize, below=1pt] at (h1) {$v_2$};
       \node[font=\scriptsize, below=1pt] at (h2) {$v_1$};
       \node[font=\scriptsize, below=2pt] at (h3) {$u$};
       \node[font=\scriptsize, below=1pt] at (h4) {$w_1$};
       \node[font=\scriptsize, below=1pt] at (h5) {$w_2$};
       \node[font=\scriptsize, below=1pt] at (h6) {$w_3$};
       \node[font=\scriptsize, right=1pt] at (v_dot) {$u_1$};
       \node[font=\scriptsize, above left=0pt] at (v_left) {$u_3$};
       \node[font=\scriptsize, above right=0pt] at (v_right) {$u_2$};
       \node[font=\scriptsize, above left=0pt] at (right_up) {$w_4$};
       \node[font=\scriptsize, below left=0pt] at (right_down) {$w_5$};
        \node[font=\scriptsize, text=blue, below right=3pt and -4pt] at (left_up) {$2_4$};
\node[font=\scriptsize, text=blue, above right=3pt and -4pt] at (left_down) {$1$};
       \path (h0) -- node[font=\scriptsize, text=blue, inner sep=0pt, above=.1pt] {$2_1$} (h1);
       \path (h1) -- node[font=\scriptsize, text=red, inner sep=0pt, above=.1pt] {$2_5$} (h2);
       \path (h2) -- coordinate[pos=.5] (figFourChangeA) (h3);
       \node[font=\scriptsize, text=blue, inner sep=0pt, anchor=south east, xshift=-.5pt, yshift=.8pt] (figFourOldA) at (figFourChangeA) {$2_1$};
       \node[font=\scriptsize, inner sep=0pt, text=red, anchor=south west, xshift=.8pt, yshift=1.5pt] at (figFourChangeA) {$1$};
       \draw[semithick] (figFourOldA.south west) -- (figFourOldA.north east);
       \path (h3) -- coordinate[pos=.5] (figFourChangeB) (h4);
       \node[font=\scriptsize, text=blue, inner sep=0pt, anchor=south east, xshift=-.8pt, yshift=1.5pt] (figFourOldB) at (figFourChangeB) {$1$};
       \node[font=\scriptsize, inner sep=0pt, text=red, anchor=south west, xshift=.8pt, yshift=.8pt] at (figFourChangeB) {$2_3$};
       \draw[semithick] (figFourOldB.south west) -- (figFourOldB.north east);
       \path (h4) -- coordinate[midway] (figFourChangeC) (h5);
       \node[font=\scriptsize, text=blue, inner sep=0pt, anchor=south east, xshift=-.8pt, yshift=.8pt] (figFourOldC) at (figFourChangeC) {$2_3$};
       \node[font=\scriptsize, inner sep=0pt, text=red, anchor=south west, xshift=.8pt, yshift=1.5pt] at (figFourChangeC) {$1$};
       \draw[semithick] (figFourOldC.south west) -- (figFourOldC.north east);
       \path (h5) -- node[font=\scriptsize, text=blue, inner sep=0pt, above=.1pt] {$2_1$} (h6);
       \path (h3) -- node[font=\scriptsize, text=blue, left=0.1pt, pos=.70] {$2_2$} (v_dot);
        \path (v_left) -- node[font=\scriptsize, text=blue, fill=white, inner sep=.3pt, below left=1.2pt, pos=.38] {$2_5$} (v_dot);
        \path (v_right) -- node[font=\scriptsize, text=blue, fill=white, inner sep=.3pt, below right=1.2pt, pos=.38] {$2_4$} (v_dot);
       \node[font=\scriptsize, text=blue, below left=3pt and -4pt] at (right_up) {$1$};
\node[font=\scriptsize, text=blue, above left=3pt and -4pt] at (right_down) {$2_2$};
       \node[font=\footnotesize] at (4.5,-2.2) {(3)};
   \end{tikzpicture} 
   \hfill\,
\caption{Lemma~\ref{no $3$-chain}, Lemma~\ref{no211} and Lemma~\ref{no220}}
\label{lemma 3.4, 3.5 and 3.6}
\end{center}
\vspace{-8mm}
\end{figure}

\begin{lemma}\label{no $3$-chain}
There is no $3$-chain.    
\end{lemma}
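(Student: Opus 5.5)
The plan is to argue by minimality, as in Lemma~\ref{mindegree Thm1}. Suppose $v_1v_2v_3$ is a $3$-chain, and let $v_0$ and $v_4$ be the other neighbours of $v_1$ and $v_3$. These need not be $3$-vertices, and $v_0=v_4$ is impossible because $\text{mad}(G)<\frac{12}{5}$ forces every cycle to be long, or at any rate a short cycle can be handled separately. Let $G'=G-v_2$. Then $G'$ is subcubic with $\text{mad}(G')<\frac{12}{5}$, so by minimality it has a good coloring $f$. In $G'$ the vertices $v_1$ and $v_3$ are $1$-vertices, so $f$ colors the pendant edges $v_0v_1$ and $v_3v_4$. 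We then try to color $v_1v_2$ and $v_2v_3$, possibly recoloring the two pendant edges, and we check Condition~I at $v_1,v_2,v_3$ and also at $v_0,v_4$ if they are $2$-vertices.

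In $G$, the edge $v_1v_2$ sees only the following edges: $v_0v_1$ (distance one), the at most two other edges at $v_0$ (distance two), and $v_2v_3$, $v_3v_4$. Symmetrically, $v_2v_3$ sees $v_3v_4$, the edges at $v_4$, and $v_0v_1$, $v_1v_2$. Hence each new edge forbids at most four $2$-colors coming from old edges, plus the color of the other new edge. I would split into cases on where the $1$-color appears near the chain.

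\textbf{Case A: both pendant edges can carry $1$.} Suppose no other edge at $v_0$ uses $1$, and likewise at $v_4$. Then I recolor $v_0v_1$ and $v_3v_4$ with $1$. This is legal because a pendant edge colored $1$ only has to avoid $1$ at its far endpoint, and it frees their old $2$-colors. This is the configuration in Figure~\ref{lemma 3.4, 3.5 and 3.6}(1). Now $v_1v_2$ must avoid only the at most two $2$-colors at $v_0$, and $v_2v_3$ only those at $v_4$, so there are at least three choices each. I pick them distinct, say $2_1$ and $2_2$.

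Condition~I at $v_2$ is then immediate, since $v_2$ sees only the $2$-colors $2_1$ and $2_2$. At $v_1$ the vertex sees at most the two $2$-colors at $v_0$ plus $2_1,2_2$, which is four, so Condition~I holds there too, and symmetrically at $v_3$. Recoloring a pendant edge to $1$ removes a $2$-color from the view of $v_0$'s neighbourhood and only adds $1$. If $v_0$ is a $2$-vertex, it now sees $1$, so I must check that it does not see all five $2$-colors. It sees its other edge, the edges at the far neighbour, and $v_1v_2$. I will use the spare choice for $v_1v_2$ to avoid completing the fifth color there.

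\textbf{Case B: the $1$-color is already used at $v_0$ on an edge other than $v_0v_1$.} Here $f(v_0v_1)$ is a $2$-color, and I keep it. I color $v_1v_2$ with $1$, which is allowed since $v_0v_1$ is not $1$. If $f(v_3v_4)=1$, I first recolor $v_3v_4$ with a $2$-color, or else swap roles so that $1$ lands on $v_2v_3$'s neighbour appropriately. Then $v_2v_3$ gets a $2$-color avoiding $f(v_0v_1)$, $f(v_3v_4)$ and the at most two $2$-colors at $v_4$; this blocks at most four of the five. Condition~I at $v_2$ holds because $v_2$ sees at most three $2$-colors. At $v_3$, which sees $1$ via $v_1v_2$, the vertex could see $f(v_2v_3)$, $f(v_3v_4)$ and the two colors at $v_4$, which is only four. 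At $v_1$ the view is similarly bounded by the colors around $v_0$ plus $v_2v_3$. If that would be all five, I change the choice for $v_2v_3$.

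The remaining mixed case, where $1$ is blocked at $v_4$ but not at $v_0$, is symmetric to these two. The main obstacle is the bookkeeping for Condition~I at $v_0$ and $v_4$ when they are $2$-vertices and the recoloring of the pendant edges changes what they see. I expect to resolve it by exploiting that each new $2$-edge always keeps at least one spare color, and choosing that spare to miss the one color that would complete the set of five.
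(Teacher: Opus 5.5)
Your reduction (delete $v_2$, take a good coloring $f$ of $G'$, extend to $v_1v_2$ and $v_2v_3$) is the one the paper uses, and most of your counting is right. However, two steps as written do not hold.

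First, $\text{mad}(G)<\frac{12}{5}$ does not force long cycles (a $4$-cycle has average degree $2$). The minimal counterexample here is only assumed to satisfy the mad bound, so $v_0=v_4$ can occur. In that configuration your Case~A fails: if no edge at $v_0$ is colored $1$, you recolor both $v_0v_1$ and $v_3v_0$ with $1$, but these two edges share $v_0$.

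Second, in Case~B the recoloring of $v_3v_4$ when $f(v_3v_4)=1$ comes from a misconception. The edges $v_1v_2$ and $v_3v_4$ are not adjacent, so both may carry the matching color $1$. As stated, the recoloring is unjustified: you do not show that $v_3v_4$ has a free $2$-color in $G'$, nor do you check Condition~I at $2$-vertices adjacent to $v_4$, which would newly see that color. Fortunately it is also unnecessary, since your own count for $v_2v_3$ works whether $f(v_3v_4)$ is $1$ or a $2$-color.

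Both defects disappear if you split, as the paper does, on the colors of the two pendant edges rather than on where $1$ is blocked. Your Case~B construction uses only that $f(v_0v_1)$ is a $2$-color. In that situation, put $1$ on $v_1v_2$ and give $v_2v_3$ a $2$-color; it is blocked by at most four $2$-colors, namely those of $v_0v_1$, $v_3v_4$ and the edges at $v_4$.

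Up to symmetry this covers everything except $f(v_0v_1)=f(v_3v_4)=1$. There, $v_1v_2$ sees at most two $2$-colors and then $v_2v_3$ at most three, so both get $2$-colors. This is your Case~A with no recoloring, and it cannot arise when $v_0=v_4$.

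Finally, the Condition~I bookkeeping you expect to be the main obstacle is vacuous. Each $2$-vertex among $v_0,\dots,v_4$ has a $2$-vertex neighbour, hence sees at most five edges. If one of its incident edges is colored $1$, it therefore sees at most four $2$-colors. Note also that a vertex sees the $1$-color only through an incident edge, so $v_3$ does not see $1$ via $v_1v_2$.
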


\begin{proof}
Suppose not, say $v_1v_2v_3$ is a $3$-chain. We delete $v_2$ to obtain a subcubic graph $G'$ with $\text{mad}(G') < \frac{12}{5}$. Let $N(v_1)=\{u_1, v_2\}$ and $N(v_3)=\{v_2, u_2\}$. By the minimality of $G$, $G'$ has a good coloring $f$. By symmetry, $f(u_1v_1), f(u_2v_3) = 1, 1$ or $1, 2_1$ or $2_1, 2_1$ or $2_1, 2_2$. We extend $f$ by first coloring $v_1v_2$ and then coloring $v_2v_3$. In the last three cases, since $v_1v_2$ sees at most four colored edges, we can always color $v_1v_2$ with an available $2$-color, and then color $v_2v_3$ with $1$ to obtain a good coloring. In the first case, since $v_1v_2$ sees at most two $2$-colors, we can always color $v_1v_2$ with an available $2$-color; subsequently, since $v_2v_3$ sees at most three $2$-colors, we can always color $v_2v_3$ with an available $2$-color to obtain a good coloring (see Figure~\ref{lemma 3.4, 3.5 and 3.6} (1)).
\end{proof}

By Lemma \ref{mindegree Thm1} and Lemma \ref{no $3$-chain}, any $2$-chain in $G$ must be a $2$-thread.

\begin{lemma}\label{no211}
If a $3$-vertex $u$ is adjacent to a $2$-thread, then it cannot be adjacent to another two $1$-chains.
\end{lemma}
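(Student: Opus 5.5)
Let $u$ be a $3$-vertex adjacent to a $2$-thread $uv_1v_2v_3$ and to two $1$-chains $uw_1x$ and $uu_1y$, so that $v_1,v_2,w_1,u_1$ are $2$-vertices and $v_3$ is a $3$-vertex with $N(v_3)=\{v_2,v_4,v_5\}$, as in Figure~\ref{lemma 3.4, 3.5 and 3.6}~(2). I would delete the middle edge $v_1v_2$ of the thread to obtain $G'$. Then $\text{mad}(G')<\frac{12}{5}$, and by minimality $G'$ has a good coloring $f$. The goal is to extend $f$ to $v_1v_2$, possibly after recoloring $uv_1$ or $v_2v_3$, so that Condition~I still holds.

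The edge $v_1v_2$ is adjacent to $uv_1$ and $v_2v_3$, and within distance two of $uw_1,uu_1,v3v_4,v_3v_5$ as well. So at most six colored edges constrain it.

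\textbf{Case A:} neither $uv_1$ nor $v_2v_3$ is colored $1$. Give $v_1v_2$ the color $1$. The only $2$-vertices whose status changes are $v_1$ and $v_2$. Vertex $v_1$ sees only the $2$-colors on $uv_1,uw_1,uu_1,v_2v_3$, and $v_2$ sees only those on $uv_1,v_2v_3,v_3v_4,v_3v_5$. Each therefore sees at most four $2$-colors, and Condition~I holds. The new edge carries $1$, not a $2$-color, so no other $2$-vertex, such as $w_1$ or $u_1$, gains a new $2$-color.

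\textbf{Case B:} both $uv_1$ and $v_2v_3$ are colored $1$. Now $v_1v_2$ sees only the four $2$-colors of $uw_1,uu_1,v_3v_4,v_3v_5$, so some $2$-color is free; use it. Then $v_1$ sees at most the $2$-colors of $uw_1,uu_1,v_1v_2$, and $v_2$ sees at most those of $v_1v_2,v_3v_4,v_3v_5$, so Condition~I holds at both. Vertices at distance at least two from $v_1v_2$ do not see it, so nothing else changes.

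\textbf{Case C:} exactly one of $uv_1,v_2v_3$ is colored $1$. By the symmetry of the argument, say $uv_1$ has color $1$ and $v_2v_3$ has a $2$-color $2_a$. If the five $2$-colored edges $v_2v_3,uw_1,uu_1,v_3v_4,v_3v_5$ do not use all five $2$-colors, color $v_1v_2$ with a free $2$-color, and check Condition~I at $v_1,v_2$ as in Case~B. Otherwise these five colors are distinct. Then I would recolor $v_2v_3$ with $1$, which is proper because $v_3v_4,v_3v_5$ carry $2$-colors and $uv_1$ is not adjacent to $v_2v_3$. Next give $v_1v_2$ the freed color $2_a$, reaching the configuration of Figure~\ref{lemma 3.4, 3.5 and 3.6}~(2). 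The edge $v_1v_2$ is at distance at least three from every other edge colored $2_a$, because all of those were already at distance at least three from $v_2v_3$ and avoid the five listed edges.

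The main obstacle is Condition~I at $v_4$ or $v_5$ when one of them is a $2$-vertex. Such a vertex may now see the $1$-color on $v_2v_3$ while having lost $2_a$. To handle this, I would first try the mirrored recoloring: put $1$ on $uv_1$ and a $2$-color on $v_2v_3$, since the two $1$-chains give freedom on the $u$ side. If a $2$-vertex among $w_1,u_1$ then becomes bad, I would swap the colors on $uw_1$ and $w_1x$ (or on $uu_1$ and $u_1y$). The point is that $v_4,v_5,w_1,u_1$ cannot all be simultaneously forced into seeing all five $2$-colors, because the $2$-colors around $u$ and around $v_3$ are pairwise distinct. I would settle this by a short case check on which of $v_4,v_5,w_1,u_1$ are $2$-vertices seeing all $2$-colors.
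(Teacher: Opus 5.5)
\textbf{Gap in Case A.} Case A fails when $uv_1$ and $v_2v_3$ carry the \emph{same} $2$-color, say $f(uv_1)=f(v_2v_3)=2_1$. Restoring the edge $v_1v_2$ puts these two edges at distance two. The coloring is then no longer a packing coloring, whatever color $v_1v_2$ receives. You checked Condition~I, but not the distance condition that the new edge creates between old edges. (When $f(uv_1)\neq f(v_2v_3)$, Case A is correct, and so is Case B.)

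This is the paper's fifth case. It is the only place where the hypothesis that $w_1,u_1$ are $2$-vertices is needed. Your Cases A--C never use that hypothesis, which should have been a warning sign. The paper handles it as follows, in your notation:
\begin{itemize}
\item If neither $uw_1$ nor $uu_1$ has color $1$, recolor $uv_1$ with $1$. This is harmless in $G'$, and you are in Case~C; argue similarly at $v_3$.
\item So assume $f(uw_1)=1$, $f(v_3v_4)=1$, $f(uu_1)=2_2$ and $f(w_1x)=2_3$. The $2$-colors within distance two of $uv_1$ are then among $2_1,2_2,2_3,f(u_1y)$. Recolor $uv_1$ with a free color in $\{2_4,2_5\}$ and give $v_1v_2$ the color $1$.
\item The only vertex that can now violate Condition~I is $w_1$. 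This happens only when $f(u_1y)$ forces the choice inside $\{2_4,2_5\}$ and the two other edges at $x$ carry exactly the two $2$-colors $w_1$ is still missing.
\item In that situation, swap the colors of $uw_1$ and $w_1x$, put $1$ on $uv_1$, and give $v_1v_2$ a free $2$-color. It now sees at most $2_1,2_2,2_3,f(v_3v_5)$.
\end{itemize}
So the swap along a $1$-chain that you propose at the end is the right tool, but it belongs in this case.

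\textbf{Case C is already complete.} The ``main obstacle'' you raise there is not real. In this paper a vertex sees the $1$-color only through an \emph{incident} edge. Recoloring $v_2v_3$ with $1$ therefore does not make $v_4$ or $v_5$ see $1$; it only removes $2_a$ from the $2$-colors they see. Also, the new edge $v_1v_2$ has no endpoint within distance one of $v_4$ or $v_5$. Hence Condition~I at $v_4,v_5$ is preserved automatically, and likewise at $w_1,u_1$ in the mirrored case. Your recoloring therefore finishes Case~C. It is essentially the paper's reduction ``recolor $v_2v_3$ with $1$ and fall back to the $(1,1)$ case''. Your final paragraph, which as written proves nothing, can simply be dropped.
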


\begin{proof}
Suppose not, say $u$ is adjacent to a $2$-thread $uv_1v_2v_3$, where $v_3$ is a $3$-vertex, and two 1-chains $w_1$ and $u_1$. We delete $v_1v_2$ to obtain a subcubic graph $G'$ with $\text{mad}(G') < \frac{12}{5}$. By the minimality of $G$, $G'$ has a good coloring $f$. Let $N(v_3) = \{v_2, v_4, v_5\}$, $N(w_1) = \{u, w_2\}$ and $N(u_1) = \{u, u_2\}$.
By symmetry, $f(uv_1), f(v_2v_3) = 1, 1$ or $1, 2_1$ or $2_1, 1$ or $2_1, 2_2$ or $2_1, 2_1$.  

In the first case, since $v_1v_2$ sees at most four $2$-colors, we can always color $v_1v_2$ with an available $2$-color to obtain a good coloring(see Figure~\ref{lemma 3.4, 3.5 and 3.6} (2)). In the second case, we may assume that one of $f(v_3v_4)$ and $f(v_3v_5)$ is $1$, otherwise we can recolor $v_2v_3$ with $1$, which reduces to the first case. However, since $v_1v_2$ sees at most four $2$-colors, we can color $v_1v_2$ with an available $2$-color to obtain a good coloring. The third case can be proved by repeating the argument for the second case.

In the fourth case, we color $v_1v_2$ with $1$ to obtain a good coloring.

In the fifth case, we may assume that one of $f(uw_1)$ and $f(uu_1)$ is $1$, and one of $f(v_3v_4)$ and $f(v_3v_5)$ is $1$. Otherwise, we could recolor $uv_1$ or $v_2v_3$ with $1$, which reduces to one of the previous cases. By symmetry, we may assume $f(uw_1) = 1$, $f(v_3v_4) = 1$, $f(uu_1)=2_2$ and $f(w_1w_2)=2_3$.   
Note that $uv_1$ sees at most four $2$-colors, so we can recolor
$uv_1$ with an available $2$-color. It remains to check whether the
resulting coloring is a good coloring. The $2$-vertex $w_1$ is the
only vertex that may violate the conditions of a good coloring.

If $w_2$ is a $2$-vertex, then it is easy to see that $w_1$ satisfies
the conditions of a good coloring. Thus, we may assume that $w_2$ is
a $3$-vertex. Let $N(w_2)=\{w_1,w_3,w_4\}$.

The only bad case occurs when
\[
\{f(w_2w_3),f(w_2w_4)\}=\{1,2_x\}
\quad\text{and}\quad
f(u_1u_2)=2_x
\]
for some $2_x\in\{2_4,2_5\}$. In this case, we switch the colors of
$uw_1$ and $w_1w_2$, recolor $uv_1$ with $1$, and color $v_1v_2$
with an available $2$-color. This yields a good coloring, a
contradiction.
\end{proof}

\begin{lemma}\label{no220}
If a $3$-vertex $u$ is adjacent to a $2$-thread, then it cannot be adjacent to another $2$-thread and a $3$-vertex. 
\end{lemma}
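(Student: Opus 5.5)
Suppose the statement fails. Then some $3$-vertex $u$ is adjacent to a $2$-thread $uv_1v_2v_3$ (with $v_3$ a $3$-vertex, $N(v_3)=\{v_2,v_4,v_5\}$), to another $2$-thread $uw_1w_2w_3$ (with $w_3$ a $3$-vertex, $N(w_3)=\{w_2,w_4,w_5\}$), and to a $3$-vertex $u_1$ with $N(u_1)=\{u,u_2,u_3\}$. This is the labelling of Figure~\ref{lemma 3.4, 3.5 and 3.6}(3). As in Lemma~\ref{no211}, I would delete the middle edge $v_1v_2$ of the first thread to get $G'$ with $\text{mad}(G')<\frac{12}{5}$. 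By minimality, $G'$ has a good coloring $f$, and I would try to extend it. The edge $v_1v_2$ sees $uv_1$, $uw_1$, $uu_1$, $v_2v_3$, $v_3v_4$ and $v_3v_5$. So it sees at most six colored edges. The argument must exploit repeated colors, plus the freedom to recolor the thread edges $uv_1$, $v_2v_3$, $w_1w_2$ and $uw_1$.

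I would split by symmetry on the pair $(f(uv_1),f(v_2v_3))$, exactly as in Lemma~\ref{no211}: $(1,1)$, $(1,2_1)$, $(2_1,1)$, $(2_1,2_2)$, $(2_1,2_1)$.
\begin{itemize}
\item Case $(2_1,2_2)$: color $v_1v_2$ with $1$. Condition~I at $v_1$ and $v_2$ must then be checked. It holds because each of these vertices sees at most $2_1$, $2_2$ and the few colors around $u$ or $v_3$. If it fails, I would switch the colors of $v_1v_2$ and the adjacent $2$-colored thread edge.
\item Cases $(1,1)$, $(1,2_1)$ and $(2_1,1)$: if the endpoint $v_3$ (resp.\ $u$) does not see $1$ on another edge, recolor the $2$-colored thread edge with $1$ to reduce to $(1,1)$. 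In $(1,1)$, the edge $v_1v_2$ sees only $f(uw_1)$, $f(uu_1)$, $f(v_3v_4)$ and $f(v_3v_5)$, which is at most four $2$-colors, so a fifth $2$-color is free. The remaining check is Condition~I at $v_1$, $v_2$ and at $w_1$. The vertex $w_1$ may now see $1$ and all five $2$-colors, because its neighbourhood includes $u$'s edges.
\item Case $(2_1,2_1)$: we may assume $u$ and $v_3$ each see $1$ elsewhere, else recolor to a previous case. Because $u_1$ is a $3$-vertex (not a $1$-chain as in Lemma~\ref{no211}), the edge carrying $1$ at $u$ is either $uw_1$ or $uu_1$. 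These two subcases need separate treatment.
\end{itemize}

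The main obstacle is the last case, together with Condition~I at the $2$-vertex $w_1$ (and $w_2$). Here the extra freedom compared with Lemma~\ref{no211} is smaller on the $u_1$ side, since $u_1$'s two other edges are fixed and can block colors. My first attempt would be the recoloring shown in the figure: shift the $1$-color along the second thread, so that $uw_1$, $w_1w_2$ and $w_2w_3$ change roughly from $(1,2_x,\cdot)$ to $(2_x,1,\cdot)$. This requires $w_3$ to not already see $1$ on $w_3w_4$ or $w_3w_5$; if it does, swap instead the colors of $w_1w_2$ and $w_2w_3$. Freeing $1$ at $u$ then lets $uv_1$ take $1$ and $v_1v_2$ take a $2$-color.

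In the subcase $f(uu_1)=1$, I would instead recolor $uv_1$ with a $2$-color avoiding those seen at $u_1$ and $v_3$. At most four are blocked: $f(uw_1)$, $f(u_1u_2)$, $f(u_1u_3)$ and one at $v_3$. Then $v_1v_2$ is colored as in the $(2_1,1)$ case.

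Each bad configuration should force specific equalities among $f(u_1u_2)$, $f(u_1u_3)$, $f(v_3v_4)$, $f(v_3v_5)$, $f(w_3w_4)$ and $f(w_3w_5)$. In each I would pick one more local swap, e.g.\ exchanging the two colors on the second thread. I expect this finite check to be the bulk of the work.
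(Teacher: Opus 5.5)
Your skeleton matches the paper's: delete $v_1v_2$, split on $(f(uv_1),f(v_2v_3))$, treat the first four cases as in Lemma~\ref{no211}, and split the case $(2_1,2_1)$ by whether the $1$ at $u$ lies on $uw_1$ or on $uu_1$. However, that last case is the only new content of the lemma, and the concrete steps you propose for it fail.

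In the subcase $f(uu_1)=1$, the $2$-colors forbidden for a new color on $uv_1$ are $f(v_2v_3)=2_1$ together with $f(uw_1)$, $f(w_1w_2)$, $f(u_1u_2)$, $f(u_1u_3)$. You omit $w_1w_2$. The edges $v_3v_4,v_3v_5$ play no role, since they are at distance three from $uv_1$. Write $f(uw_1)=2_2$, $f(u_1u_2)=2_3$, $f(u_1u_3)=2_4$. Then the configuration $f(w_1w_2)=2_5$ blocks all five $2$-colors, so your ``at most four are blocked'' step is false. The paper resolves it as follows. If $f(w_2w_3)\neq 1$, recolor $w_1w_2$ with $1$; this frees $2_5$ for $uv_1$, and then $v_1v_2$ gets $1$. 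If $f(w_2w_3)=1$, swap the colors of $uv_1$ and $uw_1$ and color $v_1v_2$ with $1$.

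In the subcase $f(uw_1)=1$, say $f(uu_1)=2_2$ and $f(w_1w_2)=2_3$. The shift $uw_1\mapsto 2_3$, $w_1w_2\mapsto 1$ is blocked by $f(w_2w_3)=1$ or by $2_3\in\{f(u_1u_2),f(u_1u_3)\}$. It is not blocked by the colors on $w_3w_4,w_3w_5$; the figure you invoke actually has $f(w_3w_4)=1$. Your fallback of exchanging $w_1w_2$ and $w_2w_3$ does not free the color $1$ at $u$. Two ideas are missing.
\begin{enumerate}
\item Besides $2_1$, the edge $uv_1$ sees only $2_2,2_3,f(u_1u_2),f(u_1u_3)$. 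So you may assume $\{f(u_1u_2),f(u_1u_3)\}=\{2_4,2_5\}$; otherwise recolor $uv_1$ with a free $2$-color and give $v_1v_2$ the color $1$.
\item Branch on $f(w_2w_3)$. If it is not $2_1$, swap $uv_1$ and $uw_1$ and give $v_1v_2$ a free $2$-color; it now sees only $2_1$, $2_2$ and at most one color at $v_3$. If it equals $2_1$, the shift along the second thread is legal, after which $uv_1$ takes $1$ and $v_1v_2$ a $2$-color.
\end{enumerate}

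Finally, Condition~I is not the obstacle you expect. Each of $v_1,v_2,w_1,w_2$ has a $2$-vertex neighbour, so it sees at most five colored edges, and one of them is colored $1$ whenever it sees $1$. In particular, your claim that $w_1$ may see $1$ and all five $2$-colors is false.

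You also leave implicit the subcases of $(1,2_1)$ and $(2_1,1)$ where the endpoint already sees $1$ on another edge. These are easy: there $v_1v_2$ sees at most four $2$-colors.
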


\begin{proof}
Suppose not, say $u$ is adjacent to two $2$-threads $uv_1v_2v_3$ and $uw_1w_2w_3$, and another vertex $u_1$, where $v_3$ and $w_3$ are $3$-vertices. By Lemma~\ref{no211}, $u_1$ must be a $3$-vertex. We delete $v_1v_2$ to obtain a subcubic graph $G'$ with $\text{mad}(G') < \frac{12}{5}$. By the minimality of $G$, $G'$ has a good coloring $f$. Let $N(v_3) = \{v_2, v_4, v_5\}$, $N(w_3) = \{w_2, w_4, w_5\}$ and $N(u_1) = \{u, u_2, u_3\}$.
By symmetry, $f(uv_1), f(v_2v_3) = 1, 1$ or $1, 2_1$ or $2_1, 1$ or $2_1, 2_2$ or $2_1, 2_1$. 
The proofs for the first four cases are identical to that of Lemma \ref{no211}.

In the fifth case, we may assume that one of $f(uw_1)$ and $f(uu_1)$ is $1$, and one of $f(v_3v_4)$ and $f(v_3v_5)$ is $1$. Otherwise, we could recolor $uv_1$ or $v_2v_3$ with $1$, which reduces to one of the previous cases.
When $f(uw_1) = 1$, we may assume $f(uu_1) = 2_2$ and $f(w_1w_2) = 2_3$. We claim $\{f(u_1u_2), f(u_1u_3)\} = \{2_4, 2_5\}$, since otherwise $uv_1$ sees at most four $2$-colors, and we can always recolor $uv_1$ with an available $2$-color. If $f(w_2w_3) \neq 2_1$, we switch the colors of  $uv_1$ and $uw_1$, and color $v_1v_2$ with an available $2$-color. This is a contradiction. Therefore, $f(w_2w_3) = 2_1$. We switch $uw_1$ and $w_1w_2$, recolor $uv_1$ with $1$, and color $v_1v_2$ with an available $2$-color to obtain a good coloring (see Figure~\ref{lemma 3.4, 3.5 and 3.6} (3)). This is again a contradiction.

When $f(uu_1) = 1$, we may assume $f(uw_1) = 2_2$, $f(u_1u_2) = 2_3$, and $f(u_1u_3) = 2_4$. We claim $f(w_1w_2) = 2_5$, since otherwise $uv_1$ sees at most four $2$-colors, and we can always recolor $uv_1$ with an available $2$-color. Furthermore, we claim $f(w_2w_3) = 1$, since otherwise we can recolor $w_1w_2$ with $1$. However, we switch $uv_1$ and $uw_1$, and subsequently color $v_1v_2$ with $1$ to obtain a good coloring. This is a contradiction.
\end{proof}

We conclude the proof using a discharging argument. We assign an initial charge of $\text{ch}(v) = d(v) - \frac{12}{5}$ to each vertex $v \in V(G)$. Since $\text{mad}(G) < \frac{12}{5}$, we have:
\begin{equation}
\sum_{v \in V(G)} \text{ch}(v) = \sum_{v \in V(G)} \left(d(v) - \frac{12}{5}\right) = 2|E(G)| - \frac{12}{5}|V(G)| < 0.
\end{equation}

We redistribute the initial charges according to the following discharging rule:

\textbf{Rule 1:} Every $3$-vertex sends $\frac{1}{5}$ to each $2$-vertex it can reach via a path of $2$-vertices.

By Lemma \ref{mindegree Thm1}, $G$ contains no 1-vertices, so we only need to evaluate the final charges of $2$-vertices and $3$-vertices. By Lemma \ref{no $3$-chain},~\ref{no211} and~\ref{no220}, a $3$-vertex can be connected to at most three $2$-vertices via paths of $2$-vertices. Consequently, for each $3$-vertex $u$, the final charge is $\text{ch}^*(u) \ge 3 - 3 \cdot \frac{1}{5} - \frac{12}{5} = 0$. Furthermore, every $2$-vertex connects to exactly two $3$-vertices via paths of $2$-vertices. Thus, each $2$-vertex $v$ receives charges from two $3$-vertices and has a final charge of $\text{ch}^*(v) \ge 2 + 2 \cdot \frac{1}{5} - \frac{12}{5} = 0$.

Therefore, we deduce that
$$ 0 \le \sum_{v \in V(G)} \text{ch}^*(v) = \sum_{v \in V(G)} \text{ch}(v) < 0, $$
which is a contradiction. \hfill \qed

\section{Proof of Theorem~\ref{thm2}}

We use $1,2_1, 2_2, 2_3, 2_4$ to denote the $1$-color and four $2$-colors. In this section, we say that a $(1,2^4)$-packing edge-coloring $f$ is {\em good} if it further satisfies: 

\textbf{Condition 1:} The number of edges colored with 1 is maximal.

In order to extend the coloring when proving that $G$ contains no $1$-vertices, we introduce the following Condition 2.

\textbf{Condition 2:} For each $2$-vertex $u$, if $u$ sees $1$-color, then $u$ does not see all the four $2$-colors $2_1, 2_2, 2_3, 2_4$.

To avoid an unavoidable obstacle when extending the coloring in the absence of $1$-vertices, we introduce the following Condition 3.
 
\textbf{Condition 3:} Let $u_1v_1v_2u_2$ be a $2$-thread, with $u_1,u_2$ being the $3$-vertices. Let $N(u_1) = \{v_1, u_3, u_4\}$ and $N(u_2) = \{v_2, u_5, u_6\}$. If $u_1u_3, u_1u_4, u_2u_5, u_2u_6$ are colored with $2$-colors, $\{f(u_1u_3), f(u_1u_4)\} \neq \{f(u_2u_5), f(u_2u_6)\}$.

We show the following stronger theorem, which implies Theorem \ref{thm2}.

\begin{thm}\label{Thm 2}
Every subcubic planar graph with girth at least $16$ has a good coloring.    
\end{thm}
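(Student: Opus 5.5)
Following the template of the proof of Theorem~\ref{Thm 1}, I will prove the stronger mad-version stated in the introduction: every subcubic graph $G$ with $\text{mad}(G) < \frac{9}{4}$ has a good coloring. Since planar graphs of girth at least $16$ satisfy $\text{mad}(G) < \frac{2\cdot 16}{14} = \frac{16}{7} > \frac{9}{4}$, this does not literally cover girth $16$. So I will instead run the argument with threshold $\frac{16}{7}$, keeping $\frac{9}{4}$ as a target if the reducible configurations allow it. Take a minimal counterexample $G$ minimizing $|V(G)|+|E(G)|$. Every deletion below preserves both the mad bound and subcubicity, so the smaller graph has a good coloring satisfying Conditions 1--3.

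Condition 1 (maximality of the number of $1$-edges) is somewhat awkward under extension. I will handle it by working with a coloring $f$ of $G'$ that satisfies Conditions 2 and 3. After extending, I will replace the result by a coloring of $G$ with the maximum number of $1$-edges among all colorings satisfying Conditions 2 and 3. In effect, ``good'' is read as ``Conditions 2 and 3 hold, and among such colorings the count of $1$-edges is maximal.'' The payoff is the same local recoloring principle used in Lemmas~\ref{no211} and~\ref{no220}: any edge that sees no $1$-color can be assumed, or recolored, to be $1$.

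The reducible configurations, in order, are these.
\begin{enumerate}
\item $\delta(G)\ge 2$, by the same delete-the-pendant-edge analysis as Lemma~\ref{mindegree Thm1}. With only four $2$-colors there is less slack: the pendant edge sees up to six colored edges within distance two. This is exactly where Condition 2 is needed, ensuring that a $2$-vertex neighbor does not already block the $1$-color together with all four $2$-colors.
\item There is no $3$-chain. Deleting the middle vertex, the two remaining edges see few colors, and we color one with $1$ and the other with a free $2$-color, as in Lemma~\ref{no $3$-chain}.
\item Consider a $2$-thread $u_1v_1v_2u_2$. After deleting $v_1v_2$, the bad case is $f(u_1v_1)=f(v_2u_2)=2_x$ with both sides saturated by the same pair of $2$-colors. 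Condition 3 is designed precisely to exclude this, so $v_1v_2$ always gets a color.
\item A $3$-vertex is incident to at most one $2$-thread, or to one $2$-thread together with $1$-threads only in restricted patterns. The arguments are analogous to Lemmas~\ref{no211} and~\ref{no220}, with swaps along the threads.
\end{enumerate}

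After each extension I must recheck Conditions 2 and 3 on the nearby threads, and use the spare choices, as in Lemma~\ref{mindegree Thm1}, to fix violations.

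For discharging, assign $\text{ch}(v)=d(v)-m$ with $m=\frac{16}{7}$, so a $2$-vertex needs $\frac{2}{7}$ and a $3$-vertex has $\frac{5}{7}$. Rule: every $3$-vertex sends $\frac{1}{7}$ to each $2$-vertex reachable through $2$-vertices. Each $2$-vertex then ends at exactly $0$. A $3$-vertex can afford five such $2$-vertices. Hence the structural goal is: no $3$-vertex reaches six or more $2$-vertices, i.e.\ it is not incident to three $2$-threads, nor to two $2$-threads and a $2$-vertex path of length at least $1$. Proving this, particularly the configuration of three $2$-threads at one $3$-vertex, is the main obstacle. In that case I would delete one thread's middle edge and use Condition 3 on the other threads to find a recoloring of the edges at $u$. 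If the reductions turn out stronger, the rule can be tightened toward the $\frac{9}{4}$ bound.
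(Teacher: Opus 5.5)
Your proposal has two genuine gaps.

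\textbf{Step 2 (no $3$-chain) fails.} This is exactly the obstruction the paper has to work around. Let $v_1v_2v_3$ be a $3$-chain with $N(v_1)=\{u_1,v_2\}$ and $N(v_3)=\{v_2,u_2\}$, where $u_1,u_2$ are $3$-vertices. Suppose the good coloring $f$ of $G-v_2$ has $f(u_1v_1)=f(u_2v_3)=1$, and the other two edges at $u_1$ and the other two edges at $u_2$ are both colored $2_1,2_2$. Nothing forbids this in $G-v_2$, where $v_1$ and $v_3$ are $1$-vertices.
\begin{itemize}
\item Each of $v_1v_2$ and $v_2v_3$ is adjacent to a $1$-edge, so both need $2$-colors.
\item Each is within distance two of one of the pairs $\{2_1,2_2\}$, and they are adjacent, so they must receive $2_3$ and $2_4$.
\item But then $v_1$ sees the $1$-color and all four $2$-colors, violating Condition~2.
\end{itemize}
So ``color one with $1$ and the other with a free $2$-color'' is not available, and a $3$-chain cannot be reduced locally. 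The paper proves only that there is no $4$-chain (Lemma~\ref{no 4-chain}, where the extra edge allows the pattern $2,1,2$). It isolates your configuration in Claim~\ref{claim 1} and resolves it only by recoloring along the other threads at the end vertex, which is what Lemmas~\ref{no333}--\ref{no320} do.

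Your Step~3 has a similar problem. After deleting $v_1v_2$, the $2$-thread no longer exists in $G'$, so Condition~3 says nothing there. Moreover, the real obstruction for the middle edge is $f(u_1v_1)=f(v_2u_2)=1$ with complementary outer pairs $\{2_1,2_2\}$ and $\{2_3,2_4\}$, and Condition~3 permits this. So $2$-threads are not reducible this way, and your Step~4 claim of ``at most one $2$-thread'' has no support.

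\textbf{The discharging has a hole independent of Step~2.} With $\text{ch}(v)=d(v)-\frac{16}{7}$ and $\frac17$ sent per reachable $2$-vertex, a $3$-vertex incident to three $2$-threads reaches six $2$-vertices and ends negative. You leave exactly this configuration as ``the main obstacle'' with only a gesture at a reduction. This is not a technicality. The $2$-subdivision of any cubic graph has maximum average degree $\frac94<\frac{16}{7}$, and every $3$-vertex in it is of this type. So your mad version at $\frac{16}{7}$ is strictly stronger than the theorem and needs a reduction the paper does not have. The paper's lemmas give only $\text{mad}<\frac94$, i.e.\ girth $18$, as its final section remarks.

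The missing idea is to use planarity through faces:
\begin{itemize}
\item The paper's configurations are: no $4$-chain, and a $3$-vertex on a $3$-thread lies on no other thread with at least two internal vertices.
\item These imply that every six consecutive vertices of a facial cycle contain two $3$-vertices. Hence a face of length $\ell$ has at least $\lceil \ell/3\rceil$ $3$-vertices (Lemma~\ref{cycle}).
\item Assign charges $7d(v)-16$ to vertices and $d(f)-16$ to faces. Each $3$-vertex sends $\frac53$ to each incident face, and each face sends $1$ to each incident $2$-vertex.
\item Every face then ends with at least $\frac83\lceil \ell/3\rceil-16\ge 0$ for $\ell\ge 16$.
\end{itemize}
Vertices with three $2$-threads are harmless in this count, because the face excess $\ell-16$ supplies the slack that a vertex-only count lacks.
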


\textbf{Proof:} Suppose not. Let $G$ be a subcubic planar graph with girth at least $16$ and has no good coloring. We further require that $G$ has the minimum $|V(G)| + |E(G)|$. We may assume that $G$ is connected.

Ruling out $1$-vertices is the most crucial structural step for our discharging argument. Due to the extra restrictions imposed by Conditions~1--3, an extensive case analysis is required to successfully extend the good coloring.

\begin{lemma}\label{mindegree Thm2}
$\delta(G) \ge 2$.    
\end{lemma}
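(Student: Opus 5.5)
Suppose, for contradiction, that $G$ has a $1$-vertex $u$ with unique neighbour $u_1$. Let $G'=G-uu_1$. It is still subcubic and planar with girth at least $16$, so by minimality $G'$ has a good coloring $f$. The goal is to extend $f$ to $uu_1$ so that Conditions 1--3 all hold in $G$.

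Condition~3 only involves $2$-threads. Deleting $uu_1$ can create a new $2$-thread only when $u_1$ is a $3$-vertex lying on a chain of $G'$. So Condition~3 has to be rechecked only at threads through $u_1$, and it rules out just one specific coincidence of color pairs.

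Condition~1 is the subtle one, because maximality of the $1$-color is a global property. I would read it as: $f$ maximizes the number of $1$-edges among colorings satisfying Conditions 2 and 3. For the extension I would show the following. If $uu_1$ receives color $1$, then the number of $1$-edges of $G$ is $|f^{-1}(1)|+1$, and every coloring of $G$ restricts to a coloring of $G'$ with at most $|f^{-1}(1)|$ edges colored $1$, so the extension is maximal. If $uu_1$ receives a $2$-color, I must check that no coloring of $G$ has $|f^{-1}(1)|+1$ edges colored $1$. Such a coloring would have to use $1$ on $uu_1$ and restrict to a maximum coloring of $G'$ in which $u_1$ sees no $1$. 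Hence I would first try to choose $f$, among the maximum good colorings of $G'$, so that $u_1$ misses color $1$ whenever possible. If even then $u_1$ sees $1$ in every maximum good coloring, assigning a $2$-color to $uu_1$ keeps the count maximal.

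The case analysis mirrors Lemma~\ref{mindegree Thm1}. If $d_G(u_1)\le 2$, then $uu_1$ sees at most three colored edges, so a free $2$-color or color $1$ is available; checking Conditions 2 and 3 at the $2$-vertex $u_1$ is immediate. If $d_G(u_1)=3$, let $N(u_1)=\{u,u_2,u_3\}$ and split by the degrees of $u_2,u_3$ as in Case~1/Case~2 there. When both edges $u_1u_2,u_1u_3$ carry $2$-colors, color $uu_1$ with $1$. This is allowed exactly when $u_1$ misses $1$, it automatically preserves Condition~1, and it cannot create a violation of Condition~2 except at a $2$-neighbour $u_2$ that now sees $1$ together with all four $2$-colors. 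When one of these edges has color $1$, we need a $2$-color for $uu_1$. Only four $2$-colors are available, while $uu_1$ sees the $2$-colors of $u_1u_2,u_1u_3$ and of up to four edges beyond them. Condition~2 at a $2$-neighbour guarantees that neighbour misses some $2$-color, and that is the free color to use.

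\textbf{Main obstacle.} The hardest step is the subcase where $u_2$ and $u_3$ are both $3$-vertices and $u_1$ sees color $1$. Then $uu_1$ sees one $1$-edge plus up to five $2$-colored edges, and all four $2$-colors may be blocked. Here I would try local recolorings: swap the $1$ at $u_1$ with a $2$-colored edge one step further out, in the style of the proofs of Lemma~\ref{no211} and Lemma~\ref{no220}, so as to free either color $1$ or some $2$-color for $uu_1$. Each swap must be checked to keep the $1$-count unchanged, to respect Condition~2 at affected $2$-vertices, and to respect Condition~3 on nearby $2$-threads. If a configuration resists every swap, I would use the girth bound ($\ge 16$, so the neighbourhood is a tree) to delete a larger tree-like piece around $u$ and recolor it from scratch, relying on every subcubic tree being $(1,2^4)$-packing edge-colorable~\cite{HLL1}.
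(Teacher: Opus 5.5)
You have a genuine gap, and it is not where you put it. When $d_G(u_1)=3$, the vertex $u_1$ is a $2$-vertex of $G'$, so Condition~2 applies to $u_1$ itself in $f$. You never use this. Since $u$ is a leaf, the edges within distance two of $uu_1$ are exactly the edges incident to $u_1,u_2,u_3$, which are exactly the edges whose $2$-colors $u_1$ sees in $G'$. So there are two cases:
if $u_1$ misses $1$, color $uu_1$ with $1$; this is always safe, since a vertex sees $1$ only through incident edges and Condition~3 only constrains $2$-colored edges;
if $u_1$ sees $1$, Condition~2 at $u_1$ gives a free $2$-color. This is precisely why Condition~2 was introduced.

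In particular, your ``main obstacle'' ($u_2,u_3$ both $3$-vertices, $u_1$ seeing $1$) is the easy case. The free $2$-color exists. The only vertices that see the new color are $u,u_1,u_2,u_3$, none of which is a $2$-vertex, and no $2$-thread ends at $u_1$, so Conditions~2 and~3 are untouched. This is the paper's two-line Case~2. Your claim that all four $2$-colors may be blocked there contradicts Condition~2 in $G'$. Your fallback of recoloring a tree-like piece from scratch via the theorem on subcubic trees is not an argument either: that theorem colors a whole tree and says nothing about extending a precolored boundary while keeping Conditions~2--3.

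Conversely, the step you treat as routine is where the work lies. Condition~2 at a $2$-neighbour $u_2$ with $N(u_2)=\{u_1,u_4\}$ does not give a free color for $uu_1$. The vertex $u_2$ sees the $2$-colors at $u_1,u_2,u_4$, while $uu_1$ must avoid those at $u_1,u_2,u_3$. Worse, whatever $2$-color $c$ you put on $uu_1$ is now seen by $u_2$, and by $u_3$ if $d(u_3)=2$. So if $u_2$ sees $1$ and $c$ was the only $2$-color it missed, Condition~2 fails at $u_2$ in $G$. Also, if $u_4$ is a $2$-vertex with a $3$-neighbour $u_6$, then $u_1u_2u_4u_6$ is a $2$-thread of $G$ (but not of $G'$), and $uu_1$ is one of the four edges constrained by Condition~3 on it. These configurations are what force the paper's Cases~1.1--1.3, with recolorings such as swapping the colors of $uu_1$ and $u_1u_2$ and recoloring $u_2u_4$ with $1$, or picking the other of two free $2$-colors to dodge Condition~3. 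Nothing in your proposal handles them.

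Finally, under your own reading of Condition~1 (a maximum over colorings satisfying Conditions~2--3), you never need to check it for the extension. Any coloring of $G$ satisfying Conditions~2--3 can be replaced by a maximum one. Your restriction argument for it is also unjustified as written: new $2$-threads through $u_1$ appear in $G'$, and you do not check that the restriction satisfies Condition~3 on them.
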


\begin{proof}
Suppose $G$ has a $1$-vertex $u$. Let $N(u) = \{u_1\}$. We know $u_1$ is a $3$-vertex, since otherwise $u_1$ is a $2$-vertex with neighbors $u, u_2$ (we are done if $u_1$ is a $1$-vertex). We delete $u$ from $G$ to obtain a subcubic graph $G'$ with girth at least $16$. By the minimality of $G$, $G'$ has a good coloring $f$. We know $f(u_1u_2)$ is either $1$ or a $2$-color ($u_2$ can be a $2$-vertex or a $3$-vertex).
In the former case, by the definition of a good coloring, we can always color $uu_1$ with an available $2$-color to obtain a good coloring. In the latter case, we can color $uu_1$ with $1$ to obtain a good coloring, which is a contradiction.
Let $N(u_1) = \{u, u_2, u_3\}$. Note that the case where both $u_2$ and $u_3$ are $1$-vertices is trivial, since $uu_1$ would see at most two colors in $G'$.

\textbf{Case 1:} At least one of $u_2, u_3$ is a $2$-vertex. By symmetry, we may assume $u_2$ is a $2$-vertex. Let $N(u_2) = \{u_1, u_4\}$. Note that if $f(u_1u_2), f(u_1u_3) = 2_1, 2_2$, then we color $uu_1$ with $1$ to obtain a good coloring. Therefore, we may assume $u_1u_2, u_1u_3$ are not colored with $2$-colors at the same time.

\textbf{Case 1.1:} $u_3$ is a $1$-vertex. By symmetry, $f(u_1u_2), f(u_1u_3) =2_1, 1$ or $1, 2_1$. In case $u_4$ is a $1$-vertex, we have a graph on $4$-vertices and can obtain a good coloring immediately. In case $u_4$ is a $3$-vertex, say $N(u_4) = \{u_2, u_5, u_6\}$, we color $uu_1$ with a $2$-color from $\{f(u_4u_5), f(u_4u_6)\}$ to obtain a good coloring. Hence, $u_4$ is a $2$-vertex. Let $N(u_4) = \{u_2,u_5\}$. In case $f(u_1u_2), f(u_1u_3) =2_1, 1$, we color $uu_1$ with an available $2$-color to obtain a good coloring. In case $f(u_1u_2), f(u_1u_3) =1, 2_1$. We color $uu_1$ with an available $2$-color if $f(u_4u_5) = 1$, and color $uu_1$ with $f(u_4u_5)$ if $f(u_4u_5)$ is a $2$-color. This is a contradiction.

\textbf{Case 1.2:} $u_3$ is a $2$-vertex. Let $N(u_3) = \{u_1,u_5\}$. 
By symmetry, $f(u_1u_2), f(u_1u_3) = 1, 2_1$ or $2_1, 1$.
We have three subcases depending on the degree of $u_4$.

\textbf{Case 1.2.1:} $u_4$ is a $1$-vertex. In case $f(u_1u_2), f(u_1u_3) = 1, 2_1$, say $f(u_2u_4)=2_2$, we can color $uu_1$ with an available $2$-color to obtain a good coloring, unless $u_5$ is a $3$-vertex and $u_3$ sees all five colors. However, we color $uu_1$ with another available $2$-color in $\{2_3, 2_4\}$ to obtain a good coloring. In case $f(u_1u_2), f(u_1u_3) = 2_1, 1$, we color $uu_1$ with an available $2$-color to obtain a good coloring unless the following two bad cases occur. The first bad case is when $u_5$ is a $2$-vertex and Condition 3 is violated. However, we color $uu_1$ with another available $2$-color in $\{2_3, 2_4\}$ to obtain a good coloring. The second bad case is when $u_3$ sees all five colors. However, we recolor $u_1u_3$ with $f(uu_1)$, $uu_1$ with $1$ and $u_3u_5$ with $1$ to obtain a good coloring.

\textbf{Case 1.2.2:} $u_4$ is a $2$-vertex. Let $N(u_4) = \{u_2,u_6\}$. 

\textbf{Case 1.2.2.1:} $f(u_1u_2), f(u_1u_3) = 1, 2_1$. We may assume $f(u_2u_4) = 2_2$. We color $uu_1$ with an available $2$-color, say $2_3$. We claim $f(u_3u_5)$ is $1$. Suppose not, $f(u_3u_5)$ is $2_2$ or $2_4$, then $f(u_4u_6)$ must be $1$ or $2_4$, since otherwise we obtain a good coloring. If $f(u_4u_6) = 2_4$, then we switch the colors of $uu_1$ and $u_1u_2$, and color $u_2u_4$ with $1$ to obtain a good coloring. This is a contradiction. Thus, $f(u_4u_6) = 1$, we obtain a good coloring unless $u_6$ is a $3$-vertex and Condition 3 is violated at the $2$-thread $u_1u_2u_4u_6$. However, we switch the colors of $uu_1$ and $u_1u_2$ to obtain a good coloring. This is again a contradiction. Hence, $f(u_3u_5)=1$. 

We further claim $f(u_4u_6)$ must be $1$ or $2_4$, since otherwise we obtain a good coloring unless $u_3$ sees all five colors. However, we switch the colors of $u_1u_2$ and $u_2u_4$. We recolor $uu_1$ with $1$ to obtain a good coloring. In case $f(u_4u_6)=2_4$, we recolor $uu_1$ and $u_2u_4$ with $1$, $u_1u_2$ with $2_2$. We obtain a good coloring unless $u_3$ sees all five colors. However, we recolor $u_1u_2$ with $2_3$ to obtain a good coloring. This is a contradiction. In case $f(u_4u_6)=1$, we obtain a good coloring unless $u_3$ sees all five colors, or $u_6$ is a $3$-vertex and Condition 3 is violated. When these two bad cases occur simultaneously, we can recolor $uu_1$ with $2_4$. When only the first bad case occurs, we can color $uu_1$ with $2_4$ to obtain a good coloring unless $u_6$ is a $3$-vertex and Condition 3 is violated. However, we can recolor $u_1u_2$ with $2_4$ and $uu_1$ with $1$ to obtain a good coloring. When only the second bad case occurs, we recolor $uu_1$ with $2_2$. This is a good coloring unless $u_3$ sees all five colors. However, we recolor $uu_1$ with $2_4$ to obtain a good coloring. This is a contradiction.

\textbf{Case 1.2.2.2:} $f(u_1u_2), f(u_1u_3) = 2_1, 1$. We may assume $f(u_3u_5)=2_2$. By Cases 1.2.1 and 1.2.2.1, $u_5$ is a $3$-vertex.
We color $uu_1$ with an available $2$-color, say $2_3$. Then $f(u_2u_4)=1$ or $2_2$ or $2_4$. We obtain a good coloring unless $u_3$ sees all five colors. However, we recolor $uu_1$ with $2_4$ to obtain a good coloring. 

\textbf{Case 1.2.3:} $u_4$ is a $3$-vertex. We may assume $u_5$ is a $3$-vertex, since the cases when $u_5$ is a $1$-vertex and a $2$-vertex are already covered in Case 1.2.1 and 1.2.2. Let $N(u_4) = \{u_2,u_6,u_7\}$ and $N(u_5) = \{u_3,u_8,u_9\}$.
Since $u_4$ and $u_5$ are both $3$-vertices, by symmetry, it suffices to consider $f(u_1u_2), f(u_1u_3) = 1, 2_1$. We may assume $f(u_2u_4)=2_2$. We claim $f(u_3u_5) = 1$. Suppose not, by symmetry, $f(u_3u_5) = 2_2$ or $2_3$. In the former case, by symmetry, $f(u_4u_6), f(u_4u_7) = 1, 2_1$ or $1, 2_3$ or $2_1, 2_3$. We color $uu_1$ with $2_3$ to obtain a good coloring. In the latter case, if $f(u_4u_6), f(u_4u_7) = 1, 2_4$ or $2_1, 2_4$ or $1, 2_1$, we color $uu_1$ with $2_4$ to obtain a good coloring. If $f(u_4u_6), f(u_4u_7) = 2_1, 2_3$, we recolor $u_1u_2$ with $2_2$, $u_2u_4$ with $1$, and then color $uu_1$ with $1$ to obtain a good coloring. If $f(u_4u_6), f(u_4u_7) = 1, 2_3$, we recolor $u_1u_2$ with $2_4$, and then color $uu_1$ with $1$ to obtain a good coloring. This is a contradiction. Hence, $f(u_3u_5) = 1$.

By symmetry, $f(u_5u_8), f(u_5u_9) = 2_2, 2_3$ or $2_3, 2_4$ or $2_2, 2_4$. 
In the former case, we can color $uu_1$ with $2_3$ to obtain a good coloring unless $f(u_4u_6), f(u_4u_7) = 1, 2_4$ (or $2_1, 2_4$). However, we recolor $u_1u_2$ with $2_3$ (and $u_2u_4$ with $1$), and then color $uu_1$ with $1$ to obtain a good coloring. In the middle case, we color $uu_1$ with $2_3$ to obtain a good coloring unless $2_4 \in \{f(u_4u_6), f(u_4u_7)\}$. However, we recolor $uu_1$ with $2_4$ to obtain a good coloring. In the latter case, we color $uu_1$ with $2_4$ to obtain a good coloring unless $2_3 \in \{f(u_4u_6), f(u_4u_7)\}$. However, we recolor $u_1u_2$ with $2_4$, $u_2u_4$ with $1$ if $1 \notin \{f(u_4u_6), f(u_4u_7)\}$, and color $uu_1$ with $1$ to obtain a good coloring. This is a contradiction.

\textbf{Case 1.3:} $u_3$ is a $3$-vertex. Let $N(u_3) = \{u_1,u_5,u_6\}$. By symmetry, $f(u_1u_2), f(u_1u_3) = 1, 2_1$ or $2_1, 1$. We first show $f(u_1u_2), f(u_1u_3) = 1, 2_1$. Suppose not, i.e., $f(u_1u_2), f(u_1u_3) = 2_1, 1$. We claim $u_4$ is a $3$-vertex, since otherwise, we can apply Condition 2 on $u_1$ in $G'$, and color $uu_1$ with an available $2$-color to obtain a good coloring. Thus, $u_4$ is a $3$-vertex. We further claim $f(u_2u_4) = 1$, since otherwise we can color $uu_1$ with an available $2$-color to obtain a good coloring (and no condition will be violated). However, by Condition 3, we know $\{f(u_4u_7), f(u_4u_8)\} \neq \{f(u_3u_5), f(u_3u_6)\}$, and thus we have an available $2$-color to color $uu_1$ to obtain a good coloring (Condition 2 is not violated at $u_2$). This is a contradiction.

Therefore, $f(u_1u_2), f(u_1u_3) = 1, 2_1$. Say $f(u_2u_4) = 2_2$. We claim $u_4$ is not a $1$-vertex, since otherwise, we can apply Condition 2 on $u_1$ in $G'$, and color $uu_1$ with an available $2$-color to obtain a good coloring. 

\textbf{Case 1.3.1:} $u_4$ is a $2$-vertex. Let $N(u_4) = \{u_2,u_7\}$. In case $u_7$ is a $1$-vertex or a $2$-vertex, by Condition 2 on $u_1$ in $G'$, we can color $uu_1$ with an available $2$-color to obtain a good coloring unless $u_2$ sees all five colors. However, since $f(u_4u_7) \neq 1$ and $f(u_4u_7) \neq f(uu_1)$, we switch the colors of $uu_1$ and $u_1u_2$, then we recolor $u_2u_4$ with $1$ to obtain a good coloring. Thus, we assume $u_7$ is a $3$-vertex, say $N(u_7) = \{u_4, u_8, u_9\}$. 

By symmetry, $f(u_4u_7)$ can be $1, 2_1$, or $2_3$. We claim $f(u_4u_7) = 1$. Suppose not, i.e., $f(u_4u_7) = 2_1$ or $2_3$. By Condition 1, $1 \in \{f(u_7u_8), f(u_7u_9)\}$. By applying Condition 2 on $u_1$ in $G'$, we color $uu_1$ with an available $2$-color to obtain a good coloring (Condition 3 is not violated) unless $f(u_4u_7) = 2_3$ and $uu_1$ only has $2_4$ available. However, we switch the colors of $uu_1$ and $u_1u_2$, and recolor $u_2u_4$ with $1$ to obtain a good coloring.

Thus, $f(u_4u_7) = 1$. We claim $2_1 \in \{f(u_7u_8), f(u_7u_9)\}$, say $f(u_7u_8) = 2_1$, since otherwise, by Condition 2, we always have an available $2$-color to color $uu_1$ and obtain a good coloring (Condition 3 is not violated). By symmetry, we may assume $f(u_7u_9) = 2_3$. By Condition 2 again, we always have an available $2$-color, say $2_x$, to use at $uu_1$. We switch the colors of $uu_1$ and $u_1u_2$ to obtain a good coloring. 

\textbf{Case 1.3.2:} $u_4$ is a $3$-vertex. 
By applying Condition 2 on $u_1$ in $G'$, we can color $uu_1$ with an available $2$-color, say $2_3$, to obtain a good coloring unless $u_2$ sees all five colors. However, by applying Condition 2 on $u_2$ in $G'$, we can recolor $u_1u_2$ with $2_3$ (and $u_2u_4$ with $1$ if necessary, in order to keep Condition 1), and then color $uu_1$ with $1$ to obtain a good coloring. This is a contradiction.

\textbf{Case 2:} At least one of $u_2, u_3$ is a $3$-vertex. 
By symmetry, we may assume $u_2$ is a $3$-vertex. By Case 1, $u_3$ cannot be a $2$-vertex. If $u_3$ is a $1$-vertex, by symmetry, $f(u_1u_2), f(u_1u_3) = 2_1, 2_2$ or $1, 2_1$ or $2_1, 1$. In the first case, we can color $uu_1$ with $1$ to obtain a good coloring. In the latter two cases, since $u_1$ sees at most three $2$-colors, we always have an available $2$-color to color $uu_1$ to obtain a good coloring. If $u_3$ is a $3$-vertex, by symmetry, $f(u_1u_2), f(u_1u_3) = 2_1, 2_2$ or $1, 2_1$. In the former case, we can color $uu_1$ with $1$ to obtain a good coloring. In the latter case, by applying Condition 2 on $u_1$ in $G'$, we always have an available $2$-color to color $uu_1$ to obtain a good coloring. This is a contradiction.
\end{proof}

\begin{figure}[ht]
\vspace{-3mm}
 \begin{center}
    \hfill
   \begin{tikzpicture}[
       scale=0.45,
       baseline=0,
       dot/.style={circle, draw=black, fill=black, inner sep=0pt, minimum size=3pt},
        execute at begin picture={\def\scriptsize{\fontsize{7}{8.5}\selectfont}},
       elabel/.style={inner sep=1.5pt}
   ]
       \draw[semithick] (-1.5,0) -- (6.0,0);

       \node[dot] at (0,0) {};
       \node[dot] at (1.5,0) {};
       \node[dot] at (3.0,0) {};
       \node[dot] at (4.5,0) {};

       \node[font=\scriptsize, below=1pt] at (0,0) {$v_1$};
       \node[font=\scriptsize, below=1pt] at (1.5,0) {$v_2$};
       \node[font=\scriptsize, below=1pt] at (3.0,0) {$v_3$};
       \node[font=\scriptsize, below=1pt] at (4.5,0) {$v_4$};
       \node[font=\scriptsize, text=blue, above=.1pt] at (-0.75,0) {$1$};
       \node[font=\scriptsize, text=red, above=.1pt] at (0.75,0) {$2_1$};
       \node[font=\scriptsize, text=red, above=.5pt] at (2.25,0) {$1$};
       \node[font=\scriptsize, text=red, above=.1pt] at (3.75,0) {$2_2$};
       \node[font=\scriptsize, text=blue, above=.1pt] at (5.25,0) {$1$};
       \node[font=\footnotesize] at (2.25,-1.0) {(1)};
   \end{tikzpicture} \hfill
   \begin{tikzpicture}[
       scale=0.45,
       baseline=0,
       dot/.style={circle, draw=black, fill=black, inner sep=0pt, minimum size=3pt},
        execute at begin picture={\def\scriptsize{\fontsize{7}{8.5}\selectfont}},
       elabel/.style={inner sep=1.5pt}
   ]
       \coordinate (h0) at (0,0);
       \coordinate (h1) at (1.5,0);
       \coordinate (h2) at (3.0,0);
       \coordinate (h3) at (4.5,0);
       \coordinate (h4) at (6.0,0);
       \coordinate (h5) at (7.5,0);
       \coordinate (h6) at (9.0,0);
       \coordinate (h7) at (10.5,0);
       \coordinate (h8) at (12.0,0);

       \coordinate (v1) at (6.0,1.8);
       \coordinate (v2) at (6.0,3.6);
       \coordinate (v3) at (6.0,5.4);
       \coordinate (v4) at (6.0,7.2);

       \path (h0) ++(135:1.5) coordinate (left_up);
       \path (h0) ++(225:1.5) coordinate (left_down);

       \path (h8) ++(45:1.5) coordinate (right_up);
       \path (h8) ++(-45:1.5) coordinate (right_down);

       \path (v4) ++(135:1.5) coordinate (top_left);
       \path (v4) ++(45:1.5) coordinate (top_right);

       \draw[semithick] (left_up) -- (h0) -- (left_down);
       \draw[semithick] (h0) -- (h8);
       \draw[semithick] (right_up) -- (h8) -- (right_down);
       \draw[semithick] (h4) -- (v4);
       \draw[semithick] (top_left) -- (v4) -- (top_right);

       \node[dot] at (left_up) {};
       \node[dot] at (left_down) {};
       \node[dot] at (right_up) {};
       \node[dot] at (right_down) {};
       \node[dot] at (top_left) {};
       \node[dot] at (top_right) {};

       \node[dot] at (h0) {};
       \node[dot] at (h1) {};
       \node[dot] at (h2) {};
       \node[dot] at (h3) {};
       \node[dot] at (h4) {};
       \node[dot] at (h5) {};
       \node[dot] at (h6) {};
       \node[dot] at (h7) {};
       \node[dot] at (h8) {};

       \node[dot] at (v1) {};
       \node[dot] at (v2) {};
       \node[dot] at (v3) {};
       \node[dot] at (v4) {};

       \node[font=\scriptsize, above right=0pt] at (left_up) {$v_6$};
       \node[font=\scriptsize, below right=0pt] at (left_down) {$v_5$};
       \node[font=\scriptsize, below=1pt] at (h0) {$v_4$};
       \node[font=\scriptsize, below=1pt] at (h1) {$v_3$};
       \node[font=\scriptsize, below=1pt] at (h2) {$v_2$};
       \node[font=\scriptsize, below=1pt] at (h3) {$v_1$};
       \node[font=\scriptsize, below=2pt] at (h4) {$u$};
       \node[font=\scriptsize, below=1pt] at (h5) {$w_1$};
       \node[font=\scriptsize, below=1pt] at (h6) {$w_2$};
       \node[font=\scriptsize, below=1pt] at (h7) {$w_3$};
       \node[font=\scriptsize, below=1pt] at (h8) {$w_4$};
       \node[font=\scriptsize, right=1pt] at (v1) {$u_1$};
       \node[font=\scriptsize, right=1pt] at (v2) {$u_2$};
       \node[font=\scriptsize, right=1pt] at (v3) {$u_3$};
       \node[font=\scriptsize, right=1pt] at (v4) {$u_4$};
      \node[font=\scriptsize, text=blue, below right=3pt and -4pt] at (left_up) {$2_2$};
\node[font=\scriptsize, text=blue, above right=3pt and -4pt] at (left_down) {$2_1$};
       \path (h0) -- node[font=\scriptsize, text=blue, inner sep=0pt, above=1pt] {$1$} (h1);
       \path (h1) -- node[font=\scriptsize, text=red, inner sep=0pt, above=1pt] {$2_4$} (h2);
       \path (h2) -- node[font=\scriptsize, text=red, inner sep=0pt, above=1pt] {$2_1$} (h3);
       \path (h3) -- node[font=\scriptsize, text=blue, inner sep=0pt, above=.8pt, pos=.4] {$1$} (h4);
       \path (h4) -- coordinate[pos=.50] (figFiveChange) (h5);
       \node[font=\scriptsize, text=blue, inner sep=0pt, anchor=south east, xshift=-.8pt, yshift=.8pt] (figFiveOld) at (figFiveChange) {$2_1$};
       \node[font=\scriptsize, inner sep=0pt, text=red, anchor=south west, xshift=.8pt, yshift=.8pt] at (figFiveChange) {$2_4$};
       \draw[semithick] (figFiveOld.south west) -- (figFiveOld.north east);
       \path (h5) -- node[font=\scriptsize, text=blue, inner sep=0pt, above=1pt] {$1$} (h6);
       \path (h6) -- node[font=\scriptsize, text=blue, inner sep=0pt, above=1pt] {$2_3$} (h7);
       \path (h4) -- node[font=\scriptsize, text=blue, left=.1pt, pos=.60] {$2_2$} (v1);
       \path (v1) -- node[font=\scriptsize, text=blue, right=1pt] {$1$} (v2);
       \node[font=\footnotesize] at (6.0,-2.2) {(2)};
   \end{tikzpicture} 
   \hfill\,
\caption{Lemma~\ref{no 4-chain} and Lemma~\ref{no333}}
\label{lemma 4.3 and 4.4}
\end{center}
\vspace{-8mm}
\end{figure}

\begin{lemma}\label{no 4-chain}
There is no $4$-chain.
\end{lemma}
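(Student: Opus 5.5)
Suppose, towards a contradiction, that $G$ contains a $4$-chain $v_1v_2v_3v_4$, and let $N(v_1)=\{u_1,v_2\}$ and $N(v_4)=\{v_3,u_2\}$. By Lemma~\ref{mindegree Thm2}, $u_1$ and $u_2$ have degree at least $2$. The plan is the reduction used in Lemma~\ref{no $3$-chain}. We delete the middle edge $v_2v_3$ (or, if more convenient, the vertices $v_2,v_3$) to obtain $G'$. This graph is still subcubic and planar with girth at least $16$. By minimality it has a good coloring $f$, which we extend back to $G$.

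The extension uses the pattern of Figure~\ref{lemma 4.3 and 4.4}(1). We aim to color $v_1v_2,v_2v_3,v_3v_4$ as $2_a,1,2_b$. The edge $v_2v_3$ is far from the rest of the coloring: it is adjacent only to the new edges $v_1v_2$ and $v_3v_4$. So it can always receive $1$ once neither neighbouring new edge is $1$.

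\textbf{Constraints on the outer edges.} The edge $v_1v_2$ must avoid the $2$-colors at distance at most $2$. These are $f(u_1v_1)$ (if it is a $2$-color) and the colors on the edges at $u_1$ other than $u_1v_1$, so at most three $2$-colors are forbidden. The same holds for $v_3v_4$. Moreover, $v_1v_2$ and $v_3v_4$ are at distance $2$, so they need distinct $2$-colors. We split into cases by $(f(u_1v_1),f(u_2v_4))$, up to symmetry: $(1,1)$, $(1,2_x)$, $(2_x,2_y)$ and $(2_x,2_x)$.

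When $f(u_1v_1)=1$, the edge $v_1v_2$ sees at most two $2$-colors, so two choices remain. When $f(u_1v_1)$ is a $2$-color, Condition~1 together with the maximality argument used before (recolor $u_1v_1$ with $1$ if possible) lets us assume that $u_1$ is a $3$-vertex with $1$ on another incident edge. Then $v_1v_2$ again sees at most three $2$-colors, so at least one remains. Condition~2 at $u_1$ in $G'$ gives extra slack when $u_1$ is a $2$-vertex. In every case a counting or Hall-type argument should give distinct available $2$-colors for $v_1v_2$ and $v_3v_4$. We then put $1$ on $v_2v_3$, which also keeps the number of $1$-edges large.

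\textbf{Checking goodness.} We must verify Conditions~1--3 for the new coloring.
\begin{itemize}
\item Condition~2 at $v_2$ and $v_3$ holds automatically: each sees $1$ and at most three $2$-colors.
\item Condition~2 at $v_1$ (and at $u_1$ if it is a $2$-vertex) may fail. We repair it by choosing the other available $2$-color, or by swapping the roles of the colors.
\item Condition~3 involves only $2$-threads; the new chain is not a $2$-thread. It can only matter if $u_1$ lies on a $2$-thread, and it is untouched because we do not recolor edges at distance more than one from the chain.
\item For Condition~1, we use the fact that the restriction of an optimal coloring of $G$ to $G'$ is a valid coloring of $G'$. Hence any coloring of $G'$ with the maximum number of $1$-edges, extended by one $1$-edge on the chain, has at least as many $1$-edges as a coloring of $G$ could have.
\end{itemize}

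\textbf{Main obstacle.} I expect the hardest step to be the tight subcase $f(u_1v_1)=2_x$, $f(u_2v_4)=2_y$ with both $u_1,u_2$ being $3$-vertices. There the outer edges may each have exactly one available $2$-color, and these may coincide, or the only choice may break Condition~2 at $v_1$.

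For this subcase I would first try recoloring $u_1v_1$ with $1$ and using the pattern $1,2_a,2_b,\ldots$ along the chain. If that is blocked because $1$ already appears at $u_1$, I would instead shift the pattern to $2_a,2_b,1$ together with a swap at $u_2$, in the spirit of the switches used in Lemma~\ref{no211}.

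Condition~1 is the other delicate point. I would handle it by the restriction argument above: a maximum number of $1$-edges in $G'$, plus one, is at least what $G$ can achieve.
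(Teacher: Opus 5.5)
There is a genuine gap, in the case where both $f(u_1v_1)=2_x$ and $f(u_2v_4)=2_y$ are $2$-colors. Your reduction (delete $v_2,v_3$ and extend a good coloring $f$ of $G'$) matches the paper, as does your handling of the cases $(f(u_1v_1),f(u_2v_4))=(1,1)$ and $(1,2_y)$ with the pattern $2_a,1,2_b$.

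The problem starts with your argument for Condition~1, which is false. The edges $v_1v_2$ and $v_3v_4$ are not adjacent, so a coloring of $G$ may put $1$ on both. Restricting to $G'$ therefore only shows that the maximum number of $1$-edges in $G$ is at most that of $G'$ plus \emph{two}, not plus one. In this case the bound is attained. First use Condition~1 in $G'$ to assume that $u_1$ and $u_2$ each already have a $1$-edge besides $u_1v_1,u_2v_4$; otherwise recolor $u_iv_i$ with $1$ and fall back to the earlier cases. Then color $v_1v_2$ and $v_3v_4$ with $1$, and $v_2v_3$ with any $2_c\notin\{2_x,2_y\}$. This is exactly what the paper does. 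Conditions~2 and~3 are then immediate:
\begin{itemize}
\item $v_1$ sees $1$ but at most three $2$-colors: $2_x$, the one other $2$-color at $u_1$, and $2_c$;
\item $v_2$ sees only $2_x$ and $2_c$;
\item a $2$-vertex $u_1$ gains no new $2$-color, since $v_1v_2$ is colored $1$.
\end{itemize}
Your uniform pattern $2_a,1,2_b$ has one fewer $1$-edge than this, so the resulting coloring is not good in the sense of Condition~1.

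Separately, the case you single out as the main obstacle is left unproved, and your reason for its difficulty is a miscount. Once $u_1$ has a $1$-edge other than $u_1v_1$, the edge $v_1v_2$ sees at most two $2$-colors: $2_x$ and at most one more at $u_1$. So it always has two available $2$-colors, not one. Also, under your pattern $v_1$ is incident to no $1$-edge, so Condition~2 at $v_1$ is vacuous. Your fallback of recoloring $u_1v_1$ with $1$ is blocked precisely because $u_1$ already sees $1$, and the ``shift with a swap at $u_2$'' is never specified.

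If Condition~1 is read only as a selection rule for $f$, a contradiction needs just a coloring of $G$ satisfying Conditions~2 and~3. Under that reading the corrected count would let your pattern close this case, but your stated justification of Condition~1 would still be wrong. The clean fix is to switch to $1,2_c,1$ whenever both outer edges carry $2$-colors.
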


\begin{proof}
Suppose not, say $v_1v_2v_3v_4$ is a 4-chain. Let $N(v_1) = \{u_1, v_2\}$ and $N(v_4) = \{v_3, u_2\}$. We delete $v_2$ and $v_3$ to obtain a subcubic planar graph $G'$ with girth at least 16. By the minimality of $G$, $G'$ has a good coloring $f$. By symmetry, $f(u_1v_1), f(u_2v_4) = 1, 1$ or $1, 2_1$ or $2_1, 2_1$ or $2_1, 2_2$. We extend $f$ to $G$.

In the first case, regardless of whether $u_1, u_2$ are $2$-vertices or $3$-vertices, each of $v_1v_2, v_3v_4$ sees at most two $2$-colors. Therefore, we always have two available $2$-color to color each of $v_1v_2$ and $v_3v_4$. We color $v_1v_2$ and $v_3v_4$ with $2$-colors in this order, and color $v_2v_3$ with $1$ to obtain a good coloring (see Figure~\ref{lemma 4.3 and 4.4} (1)).
In the second case, regardless of whether $u_2$ is a $2$-vertex or a $3$-vertex, we may assume that $u_2$ sees a $1$-color, since otherwise it reduces to the first case. Since $v_1v_2, v_3v_4$ both see at most two $2$-colors, regardless of whether $u_1$ is a $2$-vertex or a $3$-vertex, we always have two available $2$-colors to color each of $v_1v_2$ and $v_3v_4$. We color $v_1v_2, v_3v_4$ with $2$-colors in this order, and color $v_2v_3$ with $1$ to obtain a good coloring.
In the latter two cases, we may assume that each of $u_1$ and $u_2$ sees a $1$-color, since otherwise it reduces to one of the previous two cases. We can color $v_1v_2$ and $v_3v_4$ with $1$, and color $v_2v_3$ with $2_3$ to obtain a good coloring. This is a contradiction.
\end{proof}

By Lemma \ref{mindegree Thm2} and Lemma \ref{no 4-chain}, any $3$-chain in $G$ must be a $3$-thread.

\begin{lemma}\label{no333}
If a $3$-vertex $u$ is adjacent to two $3$-threads, then it cannot be adjacent to another $3$-thread.
\end{lemma}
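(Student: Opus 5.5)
We argue by reducibility, following the pattern of Lemma~\ref{no211} and Lemma~\ref{no220}. Suppose $u$ is a $3$-vertex adjacent to three $3$-threads $uv_1v_2v_3v_4$, $uw_1w_2w_3w_4$ and $uu_1u_2u_3u_4$, where $v_4,w_4,u_4$ are $3$-vertices (by Lemma~\ref{no 4-chain} each $3$-chain is a $3$-thread). Let $N(v_4)=\{v_3,v_5,v_6\}$. We delete the middle edge $v_2v_3$ of one thread, or the vertex $v_2$ if that gives a cleaner extension. The resulting graph $G'$ is still subcubic and planar with girth at least $16$, so by minimality it has a good coloring $f$. We then extend $f$ to $G$, recoloring only edges inside the three threads near $u$. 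The goal is a good coloring of $G$, contradicting the choice of $G$.

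\textbf{Step 1: reduce to a rigid local picture.} Classify by symmetry the pair $f(v_1v_2),f(v_3v_4)$ (or the colors at the ends of the deleted piece) into the usual five types: $(1,1)$, $(1,2_a)$, $(2_a,1)$, $(2_a,2_b)$ and $(2_a,2_a)$.
\begin{itemize}
\item In type $(2_a,2_b)$ we color $v_2v_3$ with $1$. Conditions 2 and 3 are easy to check since $v_2,v_3$ are interior $2$-vertices.
\item In types $(1,1)$, $(1,2_a)$ and $(2_a,1)$ the edge $v_2v_3$ sees at most three $2$-colors, so a $2$-color is available.
\item Condition~1 (maximality of $1$-edges) is the tool that excludes many configurations. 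Whenever an edge such as $uv_1$ or $v_3v_4$ could be recolored to $1$, we would get a coloring with more $1$-edges. So we may assume that each thread edge colored with a $2$-color is blocked from $1$ at one of its ends.
\end{itemize}
This forces the pattern to alternate along each thread, with $1$ appearing on $u$'s side of each thread. Since $u$ is incident to exactly one $1$-edge, at most one of $uv_1,uw_1,uu_1$ is colored $1$. The other two carry $2$-colors, and their threads must then have $1$ on the second edge, as in Figure~\ref{lemma 4.3 and 4.4}(2).

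\textbf{Step 2: the hard case.} This is the case $f(v_1v_2)=f(v_3v_4)=2_a$, analogous to the fifth case of Lemma~\ref{no211}. Here $v_2v_3$ cannot take $2_a$ or $1$, and the obstruction comes from $2$-colors near $v_4$ and near $u$. We would first try to recolor $uv_1$ or $v_1v_2$ with another $2$-color, using that $u$'s other neighbors are $2$-vertices. Then $v_1v_2$ sees only colors on the edges at $u$ and the thread edges $w_1w_2$ and $u_1u_2$, so at most four $2$-colors appear, and with only four $2$-colors we need one coincidence. If that fails, we swap colors along a thread: exchange $f(uw_1)$ and $f(w_1w_2)$, as in Figure~\ref{lemma 4.3 and 4.4}(2) where $uw_1$ changes $2_1\to2_4$. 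This frees a $2$-color at $u$ and lets us shift the $1$ so that $v_2v_3$ gets $1$ or a free $2$-color. Each swap must preserve the number of $1$-edges, so Condition~1 survives.

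\textbf{Main obstacle.} The hardest part is verifying Conditions~2 and~3 after these swaps. Condition~3 applies to $2$-threads, and our threads are $3$-threads, so it constrains only the far ends $v_4,w_4,u_4$ if they lie on $2$-threads. Condition~2 must be checked at $v_1,w_1,u_1$ and at the $2$-vertices adjacent to $u$. We expect the decisive point to be this: the long ($3$-)threads give enough slack, since each interior edge sees few colors, that a final choice among two available $2$-colors always avoids violating Condition~2. This is the same ``use the other available $2$-color'' trick used in Lemma~\ref{mindegree Thm1}.
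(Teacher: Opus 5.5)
There is a genuine gap. The case you treat as routine is the one that makes this lemma hard, and the ``use the other available $2$-color'' trick you rely on at the end has nothing to choose from there.

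Look at the type where both end edges of the deleted piece are colored $1$. In the paper's reduction (delete $v_2$, so $f(uv_1)=f(v_3v_4)=1$), both $v_1v_2$ and $v_2v_3$ need $2$-colors. If $\{f(uw_1),f(uu_1)\}=\{f(v_4v_5),f(v_4v_6)\}=\{2_1,2_2\}$, they are forced to be $2_3,2_4$ in some order. Then $v_1$ sees $1$ and all four $2$-colors, and Condition~2 fails. Your edge-deletion variant hits the same wall. If $f(v_1v_2)=f(v_3v_4)=1$, $f(uv_1)=2_3$ and $\{f(uw_1),f(uu_1)\}=\{f(v_4v_5),f(v_4v_6)\}=\{2_1,2_2\}$, then the \emph{only} color available for $v_2v_3$ is $2_4$, and it makes $v_1$ see $1,2_1,2_2,2_3,2_4$. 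So ``$v_2v_3$ sees at most three $2$-colors'' does not finish the $(1,1)$ case. The paper escapes by recoloring at $u$, which is exactly where the other two $3$-threads are used; your proposal never says how they are used.

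The missing idea is Claim~\ref{claim 1}. In this rigid configuration the paper first shows, by separate recoloring arguments that use that $u_1,u_2$ (resp.\ $w_1,w_2$) are $2$-vertices, that one may assume $f(u_1u_2)=f(w_1w_2)=1$. Only then can $uw_1$ be recolored with a color from $\{2_3,2_4\}\setminus\{f(w_2w_3)\}$. After that, $v_1v_2$ gets $2_1$ and $v_2v_3$ gets the new color of $uw_1$.

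Your Step~1 asserts this ``alternation'' as a consequence of Condition~1, but it is not one. With $f(uv_1)=1$, having $f(uu_1)$ and $f(u_1u_2)$ both $2$-colors and $f(u_2u_3)=1$ is compatible with maximality. Excluding it is precisely what the recolorings inside the claim do. (Also, $u$ need not be incident to any $1$-edge.)

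Conversely, the case you single out as hard, $f(v_1v_2)=f(v_3v_4)=2_a$ in your edge-deletion setting, is easy. Maximality gives $f(uv_1)=1$ and a $1$-edge at $v_4$. So you recolor $v_1v_2$ with the unique $2$-color outside $\{f(uw_1),f(uu_1),2_a\}$ and put $1$ on $v_2v_3$. The issue there is the conflict between $v_1v_2$ and $v_3v_4$, not that $v_2v_3$ cannot take $1$.

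Finally, $w_1w_2$ and $u_1u_2$ are at distance $3$ from $v_1v_2$, so they do not constrain $v_1v_2$. They constrain $uv_1$ and $uw_1$, which is why the argument needs $f(u_1u_2)=f(w_1w_2)=1$ before recoloring $uw_1$.
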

\begin{proof}
Suppose not, say $u$ is adjacent to three $3$-threads $uv_1v_2v_3v_4$, $uw_1w_2w_3w_4$, and $uu_1u_2u_3u_4$, where $v_4$, $w_4$, and $u_4$ are $3$-vertices. We delete $v_2$ to obtain a subcubic graph $G'$ with girth at least 16. By the minimality of $G$, $G'$ has a good coloring $f$. Let $N(w_4) = \{w_3, w_5, w_6\}$, $N(u_4) = \{u_3, u_5, u_6\}$, and $N(v_4) = \{v_3, v_5, v_6\}$. By symmetry, $f(uv_1), f(v_3v_4) = 1, 1$ or $1, 2_1$ or $2_1, 1$ or $2_1, 2_1$ or $2_1, 2_2$.

\textbf{Case 1:} $f(uv_1), f(v_3v_4) = 1, 1$. We first prove the following claim, which will be used throughout the remainder of the argument, including Lemmas~\ref{no332},~\ref{no331},~\ref{no322},~\ref{no330},~\ref{no321}, and~\ref{no320}.

\begin{claim}\label{claim 1}
For every good coloring $f$ of $G'$, we have $\{f(uw_1), f(uu_1)\} = \{f(v_4v_5), f(v_4v_6)\}$. Note that the result holds as long as $v_1v_2v_3$ is a $3$-thread. Furthermore, we have $f(u_1u_2)=1$. Note that this holds as long as we have the $3$-thread $v_1v_2v_3$ and both $u_1,u_2$ are $2$-vertices.
\end{claim}

\begin{proof}
Suppose not, i.e., $\{f(uw_1), f(uu_1)\} \neq \{f(v_4v_5), f(v_4v_6)\}$, say $f(uw_1) \notin \{f(v_4v_5), f(v_4v_6)\}$ and $f(v_4v_6) \notin \{f(uw_1), f(uu_1)\}$. We can color $v_1v_2$ with $f(v_4v_6)$ and color $v_2v_3$ with $f(uw_1)$ to obtain a good coloring. Hence, we may assume $\{f(uw_1), f(uu_1)\} = \{f(v_4v_5), f(v_4v_6)\} = \{2_1, 2_2\}$. Note that $u_1$ and $u_2$ are $2$-vertices. 

\textbf{Case (i):} $u_3$ is a $2$-vertex. Suppose $f(u_1u_2)\neq 1$, i.e., $f(u_1u_2) \in \{2_3, 2_4\}$. We may assume that $f(u_1u_2)=2_3$. By Condition 1, we must have $f(u_2u_3) = 1$. We switch the colors of $uv_1$ and $uu_1$, color $v_1v_2$ with $1$ and $v_2v_3$ with $2_3$ to obtain a good coloring. Thus, we have $f(u_1u_2)=1$. 

\textbf{Case (ii):} $u_3$ is a $3$-vertex. Let $N(u_3) = \{u_2, u_4, u_7\}$. Suppose $f(u_1u_2) \neq 1$. By symmetry, we assume $f(u_1u_2) = 2_3$. The proof is identical to Case 1, unless Condition 3 is violated at the $2$-thread $uu_1u_2u_3$. In this case, $f(u_1u_2) = 2_3$, $f(u_2u_3)=1$, and $\{f(u_3u_4), f(u_3u_7)\} = \{2_1, 2_2\}$. Note that $w_1$ can have degree $2$ or $3$.

\textbf{Case (ii.A):} $w_1$ is a $2$-vertex. We claim that $f(w_1w_2) = 1$. Otherwise, if $f(w_1w_2) = 2_3$, we can recolor $uv_1$ with $2_4$ and $uu_1$ with $1$, color $v_1v_2$ with $1$, and $v_2v_3$ with $2_3$ to obtain a good coloring; if $f(w_1w_2) = 2_4$, we can recolor $u_1u_2$ with $2_4$, $uv_1$ with $2_3$ and $uu_1$ with $1$, color $v_1v_2$ with $1$, and $v_2v_3$ with $2_4$ to obtain a good coloring. Thus, $f(w_1w_2) = 1$. If $w_1$ does not see $2_3$, we can recolor $uv_1$ with $2_4$ and $uu_1$ with $1$, color $v_1v_2$ with $1$, and $v_2v_3$ with $2_3$ to obtain a good coloring. Otherwise, $w_1$ sees $2_3$, we can recolor $u_1u_2$ with $2_4$, and then recolor $uv_1$ with $2_3$ and $uu_1$ with $1$, color $v_1v_2$ with $1$, and $v_2v_3$ with $2_4$ to obtain a good coloring.

\textbf{Case (ii.B):} $w_1$ is a $3$-vertex. Let $N(w_1) = \{u, w_2, w_7\}$. We claim $1 \in \{f(w_1w_2), f(w_1w_7)\}$, since otherwise, we can switch the colors of $uv_1$ and $uw_1$, color $v_1v_2$ with $1$, and $v_2v_3$ with $2_3$ to obtain a good coloring. However, if $\{f(w_1w_2), f(w_1w_7)\} = \{1, 2_4\}$, then we can recolor $u_1u_2$ with $2_4$, and then recolor $uv_1$ with $2_3$ and $uu_1$ with $1$, color $v_1v_2$ with $1$, and $v_2v_3$ with $2_4$ to obtain a good coloring; if $\{f(w_1w_2), f(w_1w_7)\} = \{1, 2_3\}$, we can recolor $uv_1$ with $2_4$ and $uu_1$ with $1$, color $v_1v_2$ with $1$, and $v_2v_3$ with $2_3$ to obtain a good coloring.
\end{proof}

By Claim \ref{claim 1}, we may assume without loss of generality that $f(uw_1) = f(v_4v_5) = 2_1$ and $f(uu_1) = f(v_4v_6) = 2_2$. And since $w_1$ and $w_2$ are $2$-vertices, by the same argument as in Claim \ref{claim 1}, $f(w_1w_2) = 1$. Thus we have $f(w_1w_2) = f(u_1u_2) = 1$. We recolor $uw_1$ with a color from $\{2_3, 2_4\} \setminus \{f(w_2w_3)\}$, color $v_1v_2$ with $2_1$, and color $v_2v_3$ with the new $f(uw_1)$ to obtain a good coloring (see Figure~\ref{lemma 4.3 and 4.4} (2)). This completes the proof of Case 1.

\textbf{Case 2:} $f(uv_1) = 1$ and $f(v_3v_4) = 2_1$. We claim that one of $f(v_4v_5)$ and $f(v_4v_6)$ is 1, since otherwise we can recolor $v_3v_4$ with 1, which reduces to Case 1. However, we can color $v_2v_3$ with 1. Since $v_1v_2$ sees at most three $2$-colors, we always have an available $2$-color to color $v_1v_2$ to obtain a good coloring.

\textbf{Case 3:} $f(uv_1) = 2_1$ and $f(v_3v_4) = 1$. We claim that one of $f(uw_1)$ and $f(uu_1)$ is 1, since otherwise we can recolor $uv_1$ with 1, which reduces to Case 1. However, we can color $v_1v_2$ with 1. Since $v_2v_3$ sees at most three $2$-colors, we always have an available $2$-color to color $v_2v_3$ to obtain a good coloring. 

\textbf{Case 4:} $f(uv_1) = f(v_3v_4) = 2_1$, or $\{f(uv_1), f(v_3v_4)\} = \{2_1, 2_2\}$. We claim that one of $f(uw_1)$ and $f(uu_1)$ is 1, and one of $f(v_4v_5)$ and $f(v_4v_6)$ is 1, since otherwise we can recolor $uv_1$ with 1 or recolor $v_3v_4$ with 1, which reduces to one of the previous three cases. However, we can color $v_1v_2$ with 1. Since $v_2v_3$ sees at most three $2$-colors, we always have an available $2$-color to color $v_2v_3$ to obtain a good coloring. This is a contradiction, which completes the proof of Lemma \ref{no333}.
\end{proof}

\begin{figure}[ht]
\vspace{-3mm}
 \begin{center}
    \hfill
   \begin{tikzpicture}[
       scale=0.34,
       baseline=0,
       dot/.style={circle, draw=black, fill=black, inner sep=0pt, minimum size=2.8pt},
        execute at begin picture={\def\scriptsize{\fontsize{6}{7.5}\selectfont}},
       elabel/.style={inner sep=1.5pt}
   ]
       \coordinate (h0) at (0,0);
       \coordinate (h1) at (1.5,0);
       \coordinate (h2) at (3.0,0);
       \coordinate (h3) at (4.5,0);
       \coordinate (h4) at (6.0,0);
       \coordinate (h5) at (7.5,0);
       \coordinate (h6) at (9.0,0);
       \coordinate (h7) at (10.5,0);
       \coordinate (h8) at (12.0,0);

       \coordinate (v1) at (6.0,1.8);
       \coordinate (v2) at (6.0,3.6);
       \coordinate (v3) at (6.0,5.4);

       \path (h0) ++(135:1.5) coordinate (left_up);
       \path (h0) ++(225:1.5) coordinate (left_down);

       \path (h8) ++(45:1.5) coordinate (right_up);
       \path (h8) ++(-45:1.5) coordinate (right_down);

       \path (v3) ++(135:1.5) coordinate (top_left);
       \path (v3) ++(45:1.5) coordinate (top_right);

       \draw[semithick] (left_up) -- (h0) -- (left_down);
       \draw[semithick] (h0) -- (h8);
       \draw[semithick] (right_up) -- (h8) -- (right_down);
       \draw[semithick] (h4) -- (v3);
       \draw[semithick] (top_left) -- (v3) -- (top_right);

       \node[dot] at (left_up) {};
       \node[dot] at (left_down) {};
       \node[dot] at (right_up) {};
       \node[dot] at (right_down) {};
       \node[dot] at (top_left) {};
       \node[dot] at (top_right) {};

       \node[dot] at (h0) {};
       \node[dot] at (h1) {};
       \node[dot] at (h2) {};
       \node[dot] at (h3) {};
       \node[dot] at (h4) {};
       \node[dot] at (h5) {};
       \node[dot] at (h6) {};
       \node[dot] at (h7) {};
       \node[dot] at (h8) {};

       \node[dot] at (v1) {};
       \node[dot] at (v2) {};
       \node[dot] at (v3) {};

       \node[font=\scriptsize, above right=0pt] at (left_up) {$v_6$};
       \node[font=\scriptsize, below right=0pt] at (left_down) {$v_5$};
       \node[font=\scriptsize, below=1pt] at (h0) {$v_4$};
       \node[font=\scriptsize, below=1pt] at (h1) {$v_3$};
       \node[font=\scriptsize, below=1pt] at (h2) {$v_2$};
       \node[font=\scriptsize, below=1pt] at (h3) {$v_1$};
       \node[font=\scriptsize, below=2pt] at (h4) {$u$};
       \node[font=\scriptsize, below=1pt] at (h5) {$w_1$};
       \node[font=\scriptsize, below=1pt] at (h6) {$w_2$};
       \node[font=\scriptsize, below=1pt] at (h7) {$w_3$};
       \node[font=\scriptsize, below=1pt] at (h8) {$w_4$};
       \node[font=\scriptsize, right=1pt] at (v1) {$u_1$};
       \node[font=\scriptsize, right=1pt] at (v2) {$u_2$};
       \node[font=\scriptsize, right=1pt] at (v3) {$u_3$};
       \node[font=\scriptsize, above left=0pt] at (top_left) {$u_5$};
       \node[font=\scriptsize, above right=0pt] at (top_right) {$u_4$};
       \node[font=\scriptsize, above left=0pt] at (right_up) {$w_5$};
       \node[font=\scriptsize, below left=0pt] at (right_down) {$w_6$};
    \path (left_up) -- node[font=\scriptsize, text=blue, below=1.2pt, pos=0.45] {$2_1$} (h0);
\path (left_down) -- node[font=\scriptsize, text=blue, above=1.5pt, pos=0.5] {$2_2$} (h0);
      \path (h0) -- node[font=\scriptsize, text=blue, inner sep=0pt, above=1pt] {$1$} (h1);
\path (h1) -- node[font=\scriptsize, text=red, inner sep=0pt, above=1pt, yshift=-0.5pt] {$2_4$} (h2);
\path (h2) -- node[font=\scriptsize, text=red, inner sep=0pt, above=1pt, yshift=-0.5pt] {$2_1$} (h3);
\path (h3) -- node[font=\scriptsize, text=blue, inner sep=0pt, above=1pt, pos=.4] {$1$} (h4);
       \path (h4) -- coordinate[pos=.60] (figSixAChange) (h5);
       \node[font=\scriptsize, text=blue, inner sep=0pt, anchor=south east, xshift=-.8pt, yshift=.8pt] (figSixAOld) at (figSixAChange) {$2_1$};
       \node[font=\scriptsize, inner sep=0pt, text=red, anchor=south west, xshift=.8pt, yshift=.8pt] at (figSixAChange) {$2_4$};
       \draw[semithick] (figSixAOld.south west) -- (figSixAOld.north east);
       \path (h5) -- node[font=\scriptsize, text=blue, inner sep=0pt, above=1.5pt] {$1$} (h6);
       \path (h6) -- node[font=\scriptsize, text=blue, inner sep=0pt, above=1pt] {$2_3$} (h7);
       \path (h4) -- node[font=\scriptsize, text=blue, left=.1pt, pos=.65] {$2_2$} (v1);
       \path (v1) -- node[font=\scriptsize, text=blue, right=1pt] {$1$} (v2);
       \node[font=\footnotesize] at (6.0,-2.2) {(1)};
   \end{tikzpicture} \hfill
   \begin{tikzpicture}[
       scale=0.34,
       baseline=0,
       dot/.style={circle, draw=black, fill=black, inner sep=0pt, minimum size=2.8pt},
        execute at begin picture={\def\scriptsize{\fontsize{6}{7.5}\selectfont}},
       elabel/.style={inner sep=1.5pt}
   ]
       \coordinate (h0) at (0,0);
       \coordinate (h1) at (1.5,0);
       \coordinate (h2) at (3.0,0);
       \coordinate (h3) at (4.5,0);
       \coordinate (h4) at (6.0,0);
       \coordinate (h5) at (7.5,0);
       \coordinate (h6) at (9.0,0);
       \coordinate (h7) at (10.5,0);
       \coordinate (h8) at (12.0,0);

       \coordinate (v1) at (6.0,1.8);
       \coordinate (v2) at (6.0,3.6);

       \path (h0) ++(135:1.5) coordinate (left_up);
       \path (h0) ++(225:1.5) coordinate (left_down);

       \path (h8) ++(45:1.5) coordinate (right_up);
       \path (h8) ++(-45:1.5) coordinate (right_down);

       \path (v2) ++(135:1.5) coordinate (top_left);
       \path (v2) ++(45:1.5) coordinate (top_right);

       \draw[semithick] (left_up) -- (h0) -- (left_down);
       \draw[semithick] (h0) -- (h8);
       \draw[semithick] (right_up) -- (h8) -- (right_down);
       \draw[semithick] (h4) -- (v2);
       \draw[semithick] (top_left) -- (v2) -- (top_right);

       \node[dot] at (left_up) {};
       \node[dot] at (left_down) {};
       \node[dot] at (right_up) {};
       \node[dot] at (right_down) {};
       \node[dot] at (top_left) {};
       \node[dot] at (top_right) {};

       \node[dot] at (h0) {};
       \node[dot] at (h1) {};
       \node[dot] at (h2) {};
       \node[dot] at (h3) {};
       \node[dot] at (h4) {};
       \node[dot] at (h5) {};
       \node[dot] at (h6) {};
       \node[dot] at (h7) {};
       \node[dot] at (h8) {};

       \node[dot] at (v1) {};
       \node[dot] at (v2) {};

       \node[font=\scriptsize, above right=0pt] at (left_up) {$v_6$};
       \node[font=\scriptsize, below right=0pt] at (left_down) {$v_5$};
       \node[font=\scriptsize, below=1pt] at (h0) {$v_4$};
       \node[font=\scriptsize, below=1pt] at (h1) {$v_3$};
       \node[font=\scriptsize, below=1pt] at (h2) {$v_2$};
       \node[font=\scriptsize, below=1pt] at (h3) {$v_1$};
       \node[font=\scriptsize, below=2pt] at (h4) {$u$};
       \node[font=\scriptsize, below=1pt] at (h5) {$w_1$};
       \node[font=\scriptsize, below=1pt] at (h6) {$w_2$};
       \node[font=\scriptsize, below=1pt] at (h7) {$w_3$};
       \node[font=\scriptsize, below=1pt] at (h8) {$w_4$};
       \node[font=\scriptsize, right=1pt] at (v1) {$u_1$};
       \node[font=\scriptsize, right=1pt] at (v2) {$u_2$};
       \node[font=\scriptsize, above left=0pt] at (top_left) {$u_4$};
       \node[font=\scriptsize, above right=0pt] at (top_right) {$u_3$};
       \node[font=\scriptsize, above left=0pt] at (right_up) {$w_5$};
       \node[font=\scriptsize, below left=0pt] at (right_down) {$w_6$};
     \path (left_up) -- node[font=\scriptsize, text=blue, below=1.2pt, pos=0.45] {$2_1$} (h0);
\path (left_down) -- node[font=\scriptsize, text=blue, above=1.5pt, pos=0.5] {$2_2$} (h0);
       \path (h0) -- node[font=\scriptsize, text=blue, inner sep=0pt, above=1pt] {$1$} (h1);
       \path (h1) -- node[font=\scriptsize, text=red, inner sep=0pt, above=.5pt] {$2_3$} (h2);
       \path (h2) -- node[font=\scriptsize, text=red, inner sep=0pt, above=1.5pt] {$1$} (h3);
       \path (h3) -- coordinate[pos=.40] (figSixBChangeLeft) (h4);
       \node[font=\scriptsize, text=blue, inner sep=0pt, anchor=south east, xshift=-.8pt, yshift=.8pt] (figSixBOldLeft) at (figSixBChangeLeft) {$1$};
       \node[font=\scriptsize, inner sep=0pt, text=red, anchor=south west, xshift=.8pt, yshift=.8pt] at (figSixBChangeLeft) {$2_4$};
       \draw[semithick] (figSixBOldLeft.south west) -- (figSixBOldLeft.north east);
       \path (h4) -- node[font=\scriptsize, text=blue, inner sep=0pt, above=.8pt, pos=.62] {$2_1$} (h5);
       \path (h5) -- node[font=\scriptsize, text=blue, inner sep=0pt, above=1.5pt] {$1$} (h6);
       \path (h4) -- coordinate[pos=.64] (figSixBChangeUp) (v1);
       \node[font=\scriptsize, text=blue, inner sep=0pt, anchor=east, xshift=-1pt] (figSixBOldUp) at (figSixBChangeUp) {$2_2$};
       \node[font=\scriptsize, inner sep=0pt, text=red, anchor=west, xshift=1pt] at (figSixBChangeUp) {$1$};
       \draw[semithick] (figSixBOldUp.south west) -- (figSixBOldUp.north east);
       \path (v1) -- node[font=\scriptsize, text=blue, right=1pt] {$2_3$} (v2);
       \node[font=\footnotesize] at (6.0,-2.2) {(2)};
   \end{tikzpicture} \hfill
   \begin{tikzpicture}[
       scale=0.34,
       baseline=0,
       dot/.style={circle, draw=black, fill=black, inner sep=0pt, minimum size=2.8pt},
        execute at begin picture={\def\scriptsize{\fontsize{6}{7.5}\selectfont}},
       elabel/.style={inner sep=1.5pt}
   ]
       \coordinate (h0) at (0,0);
       \coordinate (h1) at (1.5,0);
       \coordinate (h2) at (3.0,0);
       \coordinate (h3) at (4.5,0);
       \coordinate (h4) at (6.0,0);
       \coordinate (h5) at (7.5,0);
       \coordinate (h6) at (9.0,0);
       \coordinate (h7) at (10.5,0);
       \coordinate (h8) at (12.0,0);

       \coordinate (v1) at (6.0,1.8);

       \path (h0) ++(135:1.5) coordinate (left_up);
       \path (h0) ++(225:1.5) coordinate (left_down);

       \path (h8) ++(45:1.5) coordinate (right_up);
       \path (h8) ++(-45:1.5) coordinate (right_down);

       \path (v1) ++(135:1.5) coordinate (top_left);
       \path (v1) ++(45:1.5) coordinate (top_right);

       \draw[semithick] (left_up) -- (h0) -- (left_down);
       \draw[semithick] (h0) -- (h8);
       \draw[semithick] (right_up) -- (h8) -- (right_down);
       \draw[semithick] (h4) -- (v1);
       \draw[semithick] (top_left) -- (v1) -- (top_right);

       \node[dot] at (left_up) {};
       \node[dot] at (left_down) {};
       \node[dot] at (right_up) {};
       \node[dot] at (right_down) {};
       \node[dot] at (top_left) {};
       \node[dot] at (top_right) {};

       \node[dot] at (h0) {};
       \node[dot] at (h1) {};
       \node[dot] at (h2) {};
       \node[dot] at (h3) {};
       \node[dot] at (h4) {};
       \node[dot] at (h5) {};
       \node[dot] at (h6) {};
       \node[dot] at (h7) {};
       \node[dot] at (h8) {};

       \node[dot] at (v1) {};

       \node[font=\scriptsize, above right=0pt] at (left_up) {$v_6$};
       \node[font=\scriptsize, below right=0pt] at (left_down) {$v_5$};
       \node[font=\scriptsize, below=1pt] at (h0) {$v_4$};
       \node[font=\scriptsize, below=1pt] at (h1) {$v_3$};
       \node[font=\scriptsize, below=1pt] at (h2) {$v_2$};
       \node[font=\scriptsize, below=1pt] at (h3) {$v_1$};
       \node[font=\scriptsize, below=2pt] at (h4) {$u$};
       \node[font=\scriptsize, below=1pt] at (h5) {$w_1$};
       \node[font=\scriptsize, below=1pt] at (h6) {$w_2$};
       \node[font=\scriptsize, below=1pt] at (h7) {$w_3$};
       \node[font=\scriptsize, below=1pt] at (h8) {$w_4$};
       \node[font=\scriptsize, inner sep=0pt, right=1.5pt] at (v1) {$u_1$};
       \node[font=\scriptsize, above left=0pt] at (top_left) {$u_3$};
       \node[font=\scriptsize, above right=0pt] at (top_right) {$u_2$};
       \node[font=\scriptsize, above left=0pt] at (right_up) {$w_5$};
       \node[font=\scriptsize, below left=0pt] at (right_down) {$w_6$};
        \path (left_up) -- node[font=\scriptsize, text=blue, below=1.2pt, pos=0.45] {$2_1$} (h0);
\path (left_down) -- node[font=\scriptsize, text=blue, above=1.5pt, pos=0.5] {$2_2$} (h0);
       \path (h0) -- node[font=\scriptsize, text=blue, inner sep=0pt, above=1pt] {$1$} (h1);
       \path (h1) -- node[font=\scriptsize, text=red, inner sep=0pt, above=.5pt] {$2_3$} (h2);
       \path (h2) -- node[font=\scriptsize, text=red, inner sep=0pt, above=1pt] {$1$} (h3);
       \path (h3) -- coordinate[pos=.40] (figSixCChangeLeft) (h4);
       \node[font=\scriptsize, text=blue, inner sep=0pt, anchor=south east, xshift=-.8pt, yshift=.8pt] (figSixCOldLeft) at (figSixCChangeLeft) {$1$};
       \node[font=\scriptsize, inner sep=0pt, text=red, anchor=south west, xshift=.8pt, yshift=.8pt] at (figSixCChangeLeft) {$2_2$};
       \draw[semithick] (figSixCOldLeft.south west) -- (figSixCOldLeft.north east);
       \path (h4) -- node[font=\scriptsize, text=blue, inner sep=0pt, above=.8pt, pos=.62] {$2_1$} (h5);
       \path (h5) -- node[font=\scriptsize, text=blue, inner sep=0pt, above=1.5pt] {$1$} (h6);
       \path (h4) -- coordinate[pos=.64] (figSixCChangeUp) (v1);
       \node[font=\scriptsize, text=blue, inner sep=0pt, anchor=east, xshift=-1pt] (figSixCOldUp) at (figSixCChangeUp) {$2_2$};
       \node[font=\scriptsize, inner sep=0pt, text=red, anchor=west, xshift=1pt] at (figSixCChangeUp) {$1$};
       \draw[semithick] (figSixCOldUp.south west) -- (figSixCOldUp.north east);
       \path (top_left) -- node[font=\scriptsize, text=blue, fill=white, inner sep=.3pt, below left=1.2pt, pos=.38] {$2_4$} (v1);
       \path (top_right) -- node[font=\scriptsize, text=blue, fill=white, inner sep=.3pt, below right=1.2pt, pos=.38] {$2_3$} (v1);
       \node[font=\footnotesize] at (6.0,-2.2) {(3)};
   \end{tikzpicture} 
   \hfill\,
\caption{Lemma~\ref{no332}, Lemma~\ref{no331} and Lemma~\ref{no330}}
\label{lemma4.6, 4.7 and 4.8}
\end{center}
\vspace{-8mm}
\end{figure}

\begin{lemma}\label{no332}
If a $3$-vertex $u$ is adjacent to two $3$-threads, then it cannot be adjacent to a $2$-thread.
\end{lemma}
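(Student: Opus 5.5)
We argue by contradiction, following the template of Lemma~\ref{no333}. Suppose $u$ is adjacent to two $3$-threads $uv_1v_2v_3v_4$ and $uw_1w_2w_3w_4$ and to a $2$-thread $uu_1u_2u_3$, where $v_4,w_4,u_3$ are $3$-vertices. Let $N(v_4)=\{v_3,v_5,v_6\}$, $N(w_4)=\{w_3,w_5,w_6\}$ and $N(u_3)=\{u_2,u_4,u_5\}$. We delete $v_2$ to obtain $G'$, which is subcubic, planar, and of girth at least $16$, and take a good coloring $f$ of $G'$ by minimality. We then extend $f$ to $v_1v_2$ and $v_2v_3$. Up to symmetry, the cases are $f(uv_1),f(v_3v_4)=1,1$; $1,2_1$; $2_1,1$; $2_1,2_1$; and $2_1,2_2$.

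\textbf{The last four cases.} These are handled exactly as Cases~2--4 of Lemma~\ref{no333}. Whenever $uv_1$ or $v_3v_4$ is a $2$-colored edge whose endpoint $u$ or $v_4$ does not see the $1$-color, we recolor that edge with $1$ and fall back to an earlier case; this is forced by Condition~1. Otherwise we color one of $v_1v_2$, $v_2v_3$ with $1$. The other edge then sees at most three $2$-colors, so it can receive a $2$-color. Conditions~2 and~3 need checking only at $v_1,v_3$ (whose neighbourhoods are $2$-vertices) and along the $2$-thread $uu_1u_2u_3$. If Condition~3 fails there, we switch to the remaining free $2$-color.

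\textbf{Case $f(uv_1)=f(v_3v_4)=1$.} This is the main obstacle. By Claim~\ref{claim 1}, which only needs the $3$-thread $v_1v_2v_3$, we have $\{f(uw_1),f(uu_1)\}=\{f(v_4v_5),f(v_4v_6)\}=\{2_1,2_2\}$. Since $w_1,w_2$ are $2$-vertices, the second part of Claim~\ref{claim 1} gives $f(w_1w_2)=1$. The second part does not apply to $u_1u_2$, because $u_2$ is adjacent to the $3$-vertex $u_3$.

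We first try the move from Lemma~\ref{no333}: recolor $uw_1$ with a color in $\{2_3,2_4\}\setminus\{f(w_2w_3)\}$, color $v_1v_2$ with the old color of $uw_1$, and color $v_2v_3$ with the new color of $uw_1$. The remaining danger comes from the $2$-thread $uu_1u_2u_3$. Either $f(u_1u_2)$ or $f(u_2u_3)$ may equal the new color of $uw_1$. Or Condition~3 may become violated on the thread, since the $2$-colors at $u$ have changed.

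We therefore split on $f(u_1u_2)$.
\begin{itemize}
\item If $f(u_1u_2)=1$, then $u$'s new color only has to avoid $f(u_2u_3)$ and $f(w_2w_3)$. Both choices $2_3,2_4$ are consistent with the thread, and Condition~3 at the $2$-thread concerns only the pair at $u$, which now differs from the pair at $u_3$ if we choose correctly.
\item If $f(u_1u_2)=2_x\in\{2_3,2_4\}$, then by Condition~1 the color $1$ appears on $u_2u_3$ or at $u_3$. We then mimic Case~(ii) of Claim~\ref{claim 1}: recolor $uu_1$ with $1$ and $uv_1$ with the free color in $\{2_3,2_4\}\setminus\{2_x\}$, color $v_1v_2$ with $1$, and color $v_2v_3$ with $2_x$. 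If this is blocked by $f(u_2u_3)$ or by $u_3$'s colors, we first swap $f(u_1u_2)$ with the other free color, as done in Case~(ii.A).
\end{itemize}
The hardest verification is Condition~3 on the $2$-thread $uu_1u_2u_3$ after these recolorings, together with Condition~2 at $u_1$ and $u_2$. We expect a small subcase analysis on $\{f(u_3u_4),f(u_3u_5)\}$, choosing among the two free $2$-colors to break equality, which yields a good coloring in every case and the desired contradiction.
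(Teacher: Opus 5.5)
Your handling of the four cases other than $f(uv_1)=f(v_3v_4)=1$ matches the paper. So does your use of Claim~\ref{claim 1} to get $\{f(uw_1),f(uu_1)\}=\{f(v_4v_5),f(v_4v_6)\}=\{2_1,2_2\}$ (say $f(uw_1)=2_1$, $f(uu_1)=2_2$) and $f(w_1w_2)=1$.

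\textbf{The gap is the subcase $f(u_1u_2)\neq 1$.} You assert that the second part of Claim~\ref{claim 1} does not apply to $u_1u_2$ because $u_2$ is adjacent to the $3$-vertex $u_3$. That is a misreading. The claim only needs $u_1,u_2$ to be $2$-vertices, and its proof treats a $3$-vertex $u_3$ explicitly in Case~(ii). There, Cases~(ii.A)/(ii.B) handle exactly the possible violation of Condition~3 on the $2$-thread $uu_1u_2u_3$. The paper therefore gets $f(u_1u_2)=f(w_1w_2)=1$ at once and finishes in one step:
\begin{itemize}
\item recolor $uw_1$ with a color of $\{2_3,2_4\}\setminus\{f(w_2w_3)\}$;
\item color $v_1v_2$ with $2_1$;
\item color $v_2v_3$ with the new color of $uw_1$.
\end{itemize}
Condition~3 on the $2$-thread is vacuous because $uv_1$ keeps color $1$. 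Incidentally, $uw_1$ is at distance $3$ from $u_2u_3$, so in your first bullet its new color need not avoid $f(u_2u_3)$.

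Having set the claim aside, you leave your substitute for $f(u_1u_2)=2_x\in\{2_3,2_4\}$ unfinished (``we expect a small subcase analysis''). Moreover, the fallback you propose fails exactly where it is needed.
\begin{itemize}
\item Condition~1 forces precisely $f(u_2u_3)=1$, not merely a $1$ somewhere at $u_3$, since otherwise $u_1u_2$ could be recolored $1$.
\item Write $\{2_x,2_y\}=\{2_3,2_4\}$. Your move gives $uv_1$ the color $2_y$, so it violates Condition~3 on $uu_1u_2u_3$ exactly when $\{f(u_3u_4),f(u_3u_5)\}=\{2_1,2_y\}$.
\item In that configuration you cannot ``swap'' $u_1u_2$ to $2_y$, because $u_3u_4$ and $u_3u_5$ lie within distance $2$ of $u_1u_2$.
\end{itemize}
The working alternative is to give $uv_1$ the color $2_2$ instead, which is the switch used in Case~(i) of the claim. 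With $f(w_1w_2)=1$ and $f(u_2u_3)=1$, both choices $c\in\{2_2,2_y\}$ for $uv_1$ give proper colorings satisfying Condition~2 at $v_1,v_2,v_3,u_1,u_2,w_1$. Each of these vertices sees at most three $2$-colors. The pair at $u_3$ cannot equal both $\{2_1,2_2\}$ and $\{2_1,2_y\}$, so one choice also satisfies Condition~3.

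Either cite Claim~\ref{claim 1} correctly or supply this two-option argument; as written, this case is not proved.
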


\begin{proof}
Suppose not, say $u$ is adjacent to two $3$-threads $uv_1v_2v_3v_4$ and $uw_1w_2w_3w_4$, and a $2$-thread $uu_1u_2u_3$, where $v_4, w_4$, and $u_3$ are $3$-vertices. We delete $v_2$ to obtain a subcubic planar graph $G'$ with girth at least $16$. By the minimality of $G$, $G'$ has a good coloring $f$. Let $N(u_3) = \{u_2, u_4, u_5\}$ and $N(v_4) = \{v_3, v_5, v_6\}$. By symmetry, $f(uv_1), f(v_3v_4) = 1, 1$ or $1, 2_1$ or $2_1, 1$ or $2_1, 2_1$ or $2_1, 2_2$.

In case $f(uv_1), f(v_3v_4) = 1, 1$, by Claim \ref{claim 1}, we may assume without loss of generality that $f(uw_1) = f(v_4v_5) = 2_1$ and $f(uu_1) = f(v_4v_6) = 2_2$. Furthermore, it holds that $f(u_1u_2) = 1$ and $f(w_1w_2) = 1$. We can recolor $uw_1$ with a color from $\{2_3, 2_4\} \setminus \{f(w_2w_3)\}$. Then we can color $v_1v_2$ with $2_1$ and color $v_2v_3$ with the new $f(uw_1)$ to obtain a good coloring (see Figure~\ref{lemma4.6, 4.7 and 4.8} (1)).

The proofs for the remaining four cases are identical to that of Lemma \ref{no333}. This is a contradiction.
\end{proof}

\begin{lemma}\label{no331}
If a $3$-vertex $u$ is adjacent to two $3$-threads, then it cannot be adjacent to a $1$-thread.
\end{lemma}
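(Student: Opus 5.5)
We follow the template of Lemmas~\ref{no333} and~\ref{no332}. Suppose $u$ is adjacent to two $3$-threads $uv_1v_2v_3v_4$ and $uw_1w_2w_3w_4$ and to a $1$-thread $uu_1u_2$, where $v_4,w_4,u_2$ are $3$-vertices. Write $N(u_2)=\{u_1,u_3,u_4\}$ and $N(v_4)=\{v_3,v_5,v_6\}$. We delete $v_2$ to obtain a subcubic planar graph $G'$ of girth at least $16$. By minimality, $G'$ has a good coloring $f$. Up to symmetry, $(f(uv_1),f(v_3v_4))$ is $(1,1)$, $(1,2_1)$, $(2_1,1)$, $(2_1,2_1)$ or $(2_1,2_2)$.

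The last four cases go exactly as Cases 2--4 of Lemma~\ref{no333}. Those arguments only use two facts. First, if $uv_1$ (or $v_3v_4$) could be recolored with $1$, Condition~1 would fail or we would fall back to Case~1. Second, once $v_1v_2$ (or $v_2v_3$) takes color $1$, the other new edge sees at most three $2$-colors. Neither fact depends on what the third neighbour of $u$ is. Conditions~2 and~3 at $v_1,v_2,v_3$ have to be rechecked, but this is the same check as before.

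The real work is Case~1, $f(uv_1)=f(v_3v_4)=1$. By the first part of Claim~\ref{claim 1}, which needs only the $3$-thread through $v_2$, we have $\{f(uw_1),f(uu_1)\}=\{f(v_4v_5),f(v_4v_6)\}=\{2_1,2_2\}$. Since $w_1,w_2$ are $2$-vertices, the second part of the claim applied to the $w$-thread gives $f(w_1w_2)=1$. The second part no longer gives $f(u_1u_2)=1$, because $u_2$ is a $3$-vertex, so we cannot repeat verbatim the finish of Lemma~\ref{no333}. Instead, we exploit the $w$-side, which is symmetric in Lemma~\ref{no333}. We split by which of $uw_1,uu_1$ carries $2_1$ (that is, which of them matches $f(v_4v_5)$).

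In each case we recolor $uw_1$ with a color $c\in\{2_3,2_4\}\setminus\{f(w_2w_3)\}$. This is legal: $w_1w_2$ has color $1$; $uw_1$ is at distance at most two from $uu_1$, $uv_1$, $w_1w_2$, $w_2w_3$, $u_1u_2$, $v_1v_2$ (uncolored) and $v_2v_3$ (uncolored); and $u_1u_2$ is colored $1$ or a $2$-color. We then color $v_1v_2$ with the old color of $uw_1$, which is now free near $v_1$, and $v_2v_3$ with $c$. The obstacle is that $u_1u_2$ might carry $c$, or might carry the $2$-color that $v_1v_2$ now needs. If $f(u_1u_2)\in\{2_3,2_4\}$, we choose $c$ to avoid it whenever $f(w_2w_3)$ allows. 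In the worst case, $\{f(w_2w_3),f(u_1u_2)\}=\{2_3,2_4\}$, and we handle it with the local switching moves used in Claim~\ref{claim 1}, Case (ii). Since $u_2$ is a $3$-vertex, Condition~1 gives $1\in\{f(u_1u_2),f(u_2u_3),f(u_2u_4)\}$ or a recoloring of $u_1u_2$ with $1$. When $1$ appears at $u_2$, we try recoloring $u_1u_2$ with the spare $2$-color, and otherwise we swap $uv_1$ with $uu_1$ and color $v_1v_2$ with $1$.

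Finally we verify Conditions~2 and~3 at the $2$-vertices $v_1,v_2,v_3,w_1,u_1$ and on the $3$-thread ends. Condition~3 concerns $2$-threads only, so it is irrelevant to $3$-threads, and only the new $1$-thread at $u_1$ needs Condition~2. This final subcase analysis at $u_2$ is the step I expect to be the main obstacle.
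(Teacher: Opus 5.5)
\textbf{There is a genuine gap in Case~1, in the main move.} Your handling of the four cases other than $f(uv_1)=f(v_3v_4)=1$, and your use of Claim~\ref{claim 1} to get $f(uw_1)=2_1$, $f(uu_1)=2_2$ and $f(w_1w_2)=1$, agree with the paper.

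The problem is that you recolor $uw_1$. The $2$-vertex $u_1$ sees every edge at $u$ and at $u_2$, including $uw_1$, and it already sees the $1$-color on $uv_1$. You postpone the Condition~2 check at $u_1$ to the end, but that check is exactly where the move fails. In $G'$ the edges $u_2u_3,u_2u_4$ avoid $2_2$, since they are within distance two of $uu_1$, and they avoid $f(u_1u_2)$. Condition~2 at $u_1$ in $G'$ then pins them down in both branches:
\begin{itemize}
\item If $f(u_1u_2)=2_3$, then $\{f(u_2u_3),f(u_2u_4)\}=\{1,2_1\}$. Your only option is $c=2_4$, and it makes $u_1$ see $1,2_1,2_2,2_3,2_4$. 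So the move fails in \emph{every} instance of this branch, not only in your ``worst case''. Your swap fallback does work here, given the forced colors at $u_2$, so this branch can be repaired by using the swap throughout.
\item If $f(u_1u_2)=1$, then $\{f(u_2u_3),f(u_2u_4)\}=\{2_1,2_x\}$ with $2_x\in\{2_3,2_4\}$. Recoloring $uw_1$ is then admissible only for $c=2_x$. When $f(w_2w_3)=2_x$, which nothing excludes, no admissible $c$ exists. Your fallback is also unavailable, because $uu_1$ cannot take $1$ next to the $1$-edge $u_1u_2$.
\end{itemize}

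\textbf{The missing idea is to branch on $f(u_1u_2)$ rather than on $f(w_2w_3)$, and to act on $uu_1$ instead of $uw_1$.} This is what the paper does.
\begin{itemize}
\item If $f(u_1u_2)=2_3$: recolor $uu_1$ with $1$ and $uv_1$ with $2_4$, then color $v_1v_2$ with $1$ and $v_2v_3$ with $2_3$. This uses nothing about $w_2w_3$, and afterwards $u_1$ sees only $2_1,2_3,2_4$.
\item If $f(u_1u_2)=1$: with $f(u_2u_3)=2_1$ as forced above, recolor $uu_1$ with the color of $\{2_3,2_4\}\setminus\{f(u_2u_4)\}$. Then color $v_1v_2$ with $2_2$ and $v_2v_3$ with the new color of $uu_1$. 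Now $u_1$ no longer sees $2_2$, so Condition~2 holds at $u_1$. Also $w_1$ sees at most three $2$-colors, because $uv_1$ and $w_1w_2$ both carry $1$.
\end{itemize}
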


\begin{proof}

Suppose not, say $u$ is adjacent to two $3$-threads $uv_1v_2v_3v_4$ and $uw_1w_2w_3w_4$, and a $1$-thread $uu_1u_2$, where $v_4, w_4$, and $u_2$ are $3$-vertices. We delete $v_2$ to obtain a subcubic planar graph $G'$ with girth at least $16$. By the minimality of $G$, $G'$ has a good coloring $f$. Let $N(u_2) = \{u_1, u_3, u_4\}$ and $N(v_4) = \{v_3, v_5, v_6\}$. By symmetry, $f(uv_1), f(v_3v_4) = 1, 1$ or $1, 2_1$ or $2_1, 1$ or $2_1, 2_1$ or $2_1, 2_2$.

In case $f(uv_1), f(v_3v_4) = 1, 1$, by Claim \ref{claim 1}, we may assume without loss of generality that $f(uw_1) = f(v_4v_5) = 2_1$ and $f(uu_1) = f(v_4v_6) = 2_2$. Furthermore, it holds that $f(w_1w_2) = 1$. 

If $f(u_1u_2)$ is a $2$-color, say $2_3$, we can recolor $uv_1$ with $2_4$ and recolor $uu_1$ with $1$. Subsequently, we can color $v_1v_2$ with 1 and $v_2v_3$ with $2_3$ to obtain a good coloring (see Figure~\ref{lemma4.6, 4.7 and 4.8} (2)). Next, if $f(u_1u_2) = 1$, by Condition 2, one of $f(u_2u_3)$ and $f(u_2u_4)$ is $2_1$, say $f(u_2u_3) = 2_1$. we can recolor $uu_1$ with a color from $\{2_3, 2_4\} \setminus \{f(u_2u_4)\}$, and then color $v_1v_2$ with $2_2$ and $v_2v_3$ with the new $f(uu_1)$ to obtain a good coloring. 

The proofs for the remaining four cases are identical to that of Lemma \ref{no333}. This is a contradiction.
\end{proof}

\begin{lemma}\label{no330}
A $3$-vertex $u$ cannot be adjacent to two $3$-threads.
\end{lemma}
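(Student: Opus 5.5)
By Lemmas~\ref{no333}, \ref{no332} and \ref{no331}, if $u$ is adjacent to two $3$-threads $uv_1v_2v_3v_4$ and $uw_1w_2w_3w_4$, then its third neighbour $u_1$ must be a $3$-vertex (a $0$-thread), since $3$-, $2$- and $1$-threads have all been excluded and Lemma~\ref{mindegree Thm2} rules out $1$-vertices. So I will assume this configuration, with $N(u_1)=\{u,u_2,u_3\}$ and $N(v_4)=\{v_3,v_5,v_6\}$. As before, I delete $v_2$ and take a good coloring $f$ of $G'$, given by minimality. I then split, by symmetry, into the five cases $f(uv_1),f(v_3v_4)\in\{(1,1),(1,2_1),(2_1,1),(2_1,2_1),(2_1,2_2)\}$.

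The last four cases should go through verbatim as in Lemma~\ref{no333}. Those arguments only used that $v_1v_2v_3$ is a $3$-thread, together with the possibility of recoloring $uv_1$ or $v_3v_4$ with $1$ to fall back into an earlier case. Once $1$ is placed on $v_1v_2$ or $v_2v_3$, the remaining edge sees at most three $2$-colors.

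The main work is Case $(1,1)$. By Claim~\ref{claim 1}, which needs only the $3$-thread $v_1v_2v_3$, I get $\{f(uw_1),f(uu_1)\}=\{f(v_4v_5),f(v_4v_6)\}=\{2_1,2_2\}$, and I fix $f(uw_1)=f(v_4v_5)=2_1$ and $f(uu_1)=f(v_4v_6)=2_2$. Since $w_1w_2$ lies on a $3$-thread, the second half of Claim~\ref{claim 1} gives $f(w_1w_2)=1$. The earlier trick of recoloring $uw_1$ with a color from $\{2_3,2_4\}\setminus\{f(w_2w_3)\}$ requires that the new color avoid $u_1u_2$ and $u_1u_3$. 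Those edges are now within distance two of $uw_1$, so I must look at the colors at $u_1$.

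At $u_1$ I will split on $\{f(u_1u_2),f(u_1u_3)\}$.
\begin{itemize}
\item If $1$ is absent at $u_1$, Condition~1 suggests recoloring $uu_1$ with $1$. Then $uv_1$ is freed to take $2_2$ or another $2$-color, after which I color $v_1v_2$ with $1$ and $v_2v_3$ with a free $2$-color. This mirrors Figure~\ref{lemma4.6, 4.7 and 4.8}(3), where $uv_1\to 2_2$ and $uu_1\to 1$.
\item If $1$ is present at $u_1$, at most one $2$-color $2_x$ is blocked there. I recolor $uw_1$ with a color in $\{2_3,2_4\}\setminus\{f(w_2w_3),2_x\}$, color $v_1v_2$ with $2_1$, and color $v_2v_3$ with the new $f(uw_1)$.
\end{itemize}

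The obstacle is the subcase where $\{f(w_2w_3),2_x\}=\{2_3,2_4\}$, so no such color exists for $uw_1$. I would first try the symmetric move of recoloring $uu_1$ instead. Failing that, I would try switching $uw_1$ with $w_1w_2$ (colors $2_1$ and $1$) and then recoloring $uv_1$. Throughout I must verify Condition~2 at $w_1$ and $v_1$ and Condition~3 at any affected $2$-thread; with $u_1$ a $3$-vertex, Condition~3 is not triggered at $u$.
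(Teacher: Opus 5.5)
There is a genuine gap: the case you call the obstacle is where the real work of this lemma lies, and your proposal leaves it open. Up to that point you follow the paper's proof exactly. You reduce to $u_1$ being a $3$-vertex, delete $v_2$, and dispose of the four cases other than $(1,1)$ as in Lemma~\ref{no333}. In case $(1,1)$ you use Claim~\ref{claim 1} to get $f(uw_1)=f(v_4v_5)=2_1$, $f(uu_1)=f(v_4v_6)=2_2$ and $f(w_1w_2)=1$. If $1\notin\{f(u_1u_2),f(u_1u_3)\}$ you switch $uu_1$ and $uv_1$; otherwise you recolor $uw_1$.

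What remains is, say, $\{f(u_1u_2),f(u_1u_3)\}=\{1,2_3\}$ and $f(w_2w_3)=2_4$. Here you only list moves to ``try'', and neither is guaranteed to work.
\begin{itemize}
\item Recoloring $uu_1$ with $2_4$ needs $2_4$ to be absent from the edges at $u_2$ and $u_3$, and nothing controls those edges.
\item Swapping $uw_1$ and $w_1w_2$ (then $uv_1\to 2_4$, $v_1v_2\to 1$, $v_2v_3\to 2_3$) puts $2_1$ on $w_1w_2$, which is at distance two from $w_3w_4$. In $f$ the edge $w_3w_4$ is at distance three from $uw_1$, so $f(w_3w_4)=2_1$ is entirely possible, and then the swap is illegal.
\item The swap can fail even when $f(w_3w_4)=1$. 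If $\{f(w_4w_5),f(w_4w_6)\}=\{2_2,2_3\}$, the $2$-vertex $w_3$ afterwards sees $1$ and all four $2$-colors, violating Condition~2.
\end{itemize}

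The missing idea is to look one edge further along the $w$-thread and split on $f(w_3w_4)$, as the paper does.
\begin{itemize}
\item If $f(w_3w_4)=1$, then $w_4w_5$ and $w_4w_6$ carry two distinct colors from $\{2_1,2_2,2_3\}$; neither can be $2_4$, because both are within distance two of $w_2w_3$. Recolor $w_2w_3$ with the third color of $\{2_1,2_2,2_3\}$. This frees $2_4$ for $uw_1$, and you then color $v_1v_2$ with $2_1$ and $v_2v_3$ with $2_4$.
\item If $f(w_3w_4)$ is a $2$-color (necessarily $\neq 2_4$), Condition~1 forces one of $w_4w_5,w_4w_6$ to be $1$. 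This is what keeps Condition~2 at $w_3$ after the next step. Now shift the $1$'s along the thread: $w_2w_3\to 1$, $w_1w_2\to 2_4$, $uw_1\to 1$, $uv_1\to 2_1$. Then color $v_1v_2$ with $1$ and $v_2v_3$ with $2_3$.
\end{itemize}
The shift works for every $2$-color on $w_3w_4$, including the case $f(w_3w_4)=2_1$ that your swap cannot handle.
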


\begin{proof}
Suppose not, say $u$ is adjacent to two $3$-threads $uv_1v_2v_3v_4$ and $uw_1w_2w_3w_4$, and another vertex $u_1$, where $v_4, w_4$ are $3$-vertices. By Lemmas~\ref{no333},~\ref{no332}, and~\ref{no331}, $u_1$ must be a $3$-vertex. We delete $v_2$ to obtain a subcubic planar graph $G'$ with girth at least $16$. By the minimality of $G$, $G'$ has a good coloring $f$. Let $N(w_4) = \{w_3, w_5, w_6\}, N(u_1) = \{u, u_2, u_3\}$, and $N(v_4) = \{v_3, v_5, v_6\}$. By symmetry, $f(uv_1), f(v_3v_4) = 1, 1$ or $1, 2_1$ or $2_1, 1$ or $2_1, 2_1$ or $2_1, 2_2$.

In the case $f(uv_1), f(v_3v_4) = 1, 1$, by Claim \ref{claim 1}, we may assume without loss of generality that $f(uw_1) = f(v_4v_5) = 2_1$ and $f(uu_1) = f(v_4v_6) = 2_2$. Furthermore, it holds that $f(w_1w_2) = 1$.

We may assume that $1 \in \{f(u_1u_2), f(u_1u_3)\}$, since otherwise $\{f(u_1u_2), f(u_1u_3)\} = \{2_3, 2_4\}$, and we can switch the colors of $uu_1$ and $uv_1$, color $v_1v_2$ with $1$, and $v_2v_3$ with $2_3$ to obtain a good coloring (see Figure~\ref{lemma4.6, 4.7 and 4.8} (3)). This is a contradiction. Therefore, we may assume by symmetry that $\{f(u_1u_2), f(u_1u_3)\} = \{1,2_3\}$. Furthermore, we claim $f(w_2w_3) = 2_4$, since otherwise, we can recolor $uw_1$ with $2_4$, color $v_1v_2$ with $2_1$, and $v_2v_3$ with $2_4$ to obtain a good coloring. 

We must have $f(w_3w_4)$ is a $2$-color, since otherwise, $f(w_3w_4) = 1$, we can recolor $uw_1$ with $2_4$ and $w_2w_3$ with a color from $\{2_1, 2_2, 2_3\} \setminus \{f(w_4w_5), f(w_4w_6)\}$. We can then color $v_1v_2$ with $2_1$ and $v_2v_3$ with $2_4$ to obtain a good coloring.
Thus, $f(w_3w_4)$ is a $2$-color. By Condition 1, one of $f(w_4w_5)$ and $f(w_4w_6)$ is 1. In this case, we can recolor $w_2w_3$ with 1, $w_1w_2$ with $2_4$, $uw_1$ with 1, and $uv_1$ with $2_1$. Subsequently, we can color $v_1v_2$ with $1$ and $v_2v_3$ with $2_3$ to obtain a good coloring. 

The proofs for the remaining four cases are identical to that of Lemma \ref{no333}. This is a contradiction.
\end{proof}

\begin{figure}[ht]
\vspace{-3mm}
 \begin{center}
    \hfill
   \begin{tikzpicture}[
       scale=0.36,
       baseline=0,
       dot/.style={circle, draw=black, fill=black, inner sep=0pt, minimum size=2.8pt},
        execute at begin picture={\def\scriptsize{\fontsize{6}{7.5}\selectfont}},
       elabel/.style={inner sep=1.5pt}
   ]
       \coordinate (h0) at (0,0);
       \coordinate (h1) at (1.5,0);
       \coordinate (h2) at (3.0,0);
       \coordinate (h3) at (4.5,0);
       \coordinate (h4) at (6.0,0);
       \coordinate (h5) at (7.5,0);
       \coordinate (h6) at (9.0,0);
       \coordinate (h7) at (10.5,0);

       \coordinate (v1) at (6.0,1.8);
       \coordinate (v2) at (6.0,3.6);
       \coordinate (v3) at (6.0,5.4);

       \path (h0) ++(135:1.5) coordinate (left_up);
       \path (h0) ++(225:1.5) coordinate (left_down);

       \path (h7) ++(45:1.5) coordinate (right_up);
       \path (h7) ++(-45:1.5) coordinate (right_down);

       \path (v3) ++(135:1.5) coordinate (top_left);
       \path (v3) ++(45:1.5) coordinate (top_right);

       \draw[semithick] (left_up) -- (h0) -- (left_down);
       \draw[semithick] (h0) -- (h7);
       \draw[semithick] (right_up) -- (h7) -- (right_down);
       \draw[semithick] (h4) -- (v3);
       \draw[semithick] (top_left) -- (v3) -- (top_right);

       \node[dot] at (left_up) {};
       \node[dot] at (left_down) {};
       \node[dot] at (right_up) {};
       \node[dot] at (right_down) {};
       \node[dot] at (top_left) {};
       \node[dot] at (top_right) {};

       \node[dot] at (h0) {};
       \node[dot] at (h1) {};
       \node[dot] at (h2) {};
       \node[dot] at (h3) {};
       \node[dot] at (h4) {};
       \node[dot] at (h5) {};
       \node[dot] at (h6) {};
       \node[dot] at (h7) {};

       \node[dot] at (v1) {};
       \node[dot] at (v2) {};
       \node[dot] at (v3) {};

       \node[font=\scriptsize, above right=0pt] at (left_up) {$v_6$};
       \node[font=\scriptsize, below right=0pt] at (left_down) {$v_5$};
       \node[font=\scriptsize, below=1pt] at (h0) {$v_4$};
       \node[font=\scriptsize, below=1pt] at (h1) {$v_3$};
       \node[font=\scriptsize, below=1pt] at (h2) {$v_2$};
       \node[font=\scriptsize, below=1pt] at (h3) {$v_1$};
       \node[font=\scriptsize, below=2pt] at (h4) {$u$};
       \node[font=\scriptsize, below=1pt] at (h5) {$w_1$};
       \node[font=\scriptsize, below=1pt] at (h6) {$w_2$};
       \node[font=\scriptsize, below=1pt] at (h7) {$w_3$};
       \node[font=\scriptsize, right=1pt] at (v1) {$u_1$};
       \node[font=\scriptsize, right=1pt] at (v2) {$u_2$};
       \node[font=\scriptsize, right=1pt] at (v3) {$u_3$};
       \node[font=\scriptsize, above left=0pt] at (top_left) {$u_5$};
       \node[font=\scriptsize, above right=0pt] at (top_right) {$u_4$};
       \node[font=\scriptsize, above left=0pt] at (right_up) {$w_5$};
       \node[font=\scriptsize, below left=0pt] at (right_down) {$w_4$};
 \path (left_up) -- node[font=\scriptsize, text=blue, below=4.5pt, pos=0.30] {$2_1$} (h0);
\path (left_down) -- node[font=\scriptsize, text=blue, above=4.5pt, pos=0.30] {$2_2$} (h0);
       \path (h0) -- node[font=\scriptsize, text=blue, inner sep=0pt, above=1pt] {$1$} (h1);
       \path (h1) -- node[font=\scriptsize, text=red, inner sep=0pt, above=1pt] {$2_4$} (h2);
       \path (h2) -- node[font=\scriptsize, text=red, inner sep=0pt, above=1pt] {$2_1$} (h3);
       \path (h3) -- node[font=\scriptsize, text=blue, inner sep=0pt, above=.8pt, pos=.38] {$1$} (h4);
       \path (h4) -- coordinate[pos=.60] (figSevenAChange) (h5);
       \node[font=\scriptsize, text=blue, inner sep=0pt, anchor=south east, xshift=-.8pt, yshift=.8pt] (figSevenAOld) at (figSevenAChange) {$2_1$};
       \node[font=\scriptsize, inner sep=0pt, text=red, anchor=south west, xshift=.8pt, yshift=.8pt] at (figSevenAChange) {$2_4$};
       \draw[semithick] (figSevenAOld.south west) -- (figSevenAOld.north east);
       \path (h5) -- node[font=\scriptsize, text=blue, inner sep=0pt, above=1pt] {$1$} (h6);
       \path (h6) -- node[font=\scriptsize, text=blue, inner sep=0pt, above=1pt] {$2_3$} (h7);
       \path (h4) -- node[font=\scriptsize, text=blue, inner sep=0pt, left=1pt, pos=.60] {$2_2$} (v1);
       \path (v1) -- node[font=\scriptsize, text=blue, right=1pt] {$1$} (v2);
       \node[font=\footnotesize] at (5.25,-2.2) {(1)};
   \end{tikzpicture} \hfill
   \begin{tikzpicture}[
       scale=0.36,
       baseline=0,
       dot/.style={circle, draw=black, fill=black, inner sep=0pt, minimum size=2.8pt},
        execute at begin picture={\def\scriptsize{\fontsize{6}{7.3}\selectfont}},
       elabel/.style={inner sep=1.5pt}
   ]
       \coordinate (h0) at (0,0);
       \coordinate (h1) at (1.5,0);
       \coordinate (h2) at (3.0,0);
       \coordinate (h3) at (4.5,0);
       \coordinate (h4) at (6.0,0);
       \coordinate (h5) at (7.5,0);
       \coordinate (h6) at (9.0,0);
       \coordinate (h7) at (10.5,0);

       \coordinate (v1) at (6.0,1.8);
       \coordinate (v2) at (6.0,3.6);

       \path (h0) ++(135:1.5) coordinate (left_up);
       \path (h0) ++(225:1.5) coordinate (left_down);

       \path (h7) ++(45:1.5) coordinate (right_up);
       \path (h7) ++(-45:1.5) coordinate (right_down);

       \path (v2) ++(135:1.5) coordinate (top_left);
       \path (v2) ++(45:1.5) coordinate (top_right);

       \draw[semithick] (left_up) -- (h0) -- (left_down);
       \draw[semithick] (h0) -- (h7);
       \draw[semithick] (right_up) -- (h7) -- (right_down);
       \draw[semithick] (h4) -- (v2);
       \draw[semithick] (top_left) -- (v2) -- (top_right);

       \node[dot] at (left_up) {};
       \node[dot] at (left_down) {};
       \node[dot] at (right_up) {};
       \node[dot] at (right_down) {};
       \node[dot] at (top_left) {};
       \node[dot] at (top_right) {};

       \node[dot] at (h0) {};
       \node[dot] at (h1) {};
       \node[dot] at (h2) {};
       \node[dot] at (h3) {};
       \node[dot] at (h4) {};
       \node[dot] at (h5) {};
       \node[dot] at (h6) {};
       \node[dot] at (h7) {};

       \node[dot] at (v1) {};
       \node[dot] at (v2) {};

       \node[font=\scriptsize, above right=0pt] at (left_up) {$v_5$};
       \node[font=\scriptsize, below right=0pt] at (left_down) {$v_6$};
       \node[font=\scriptsize, below=1pt] at (h0) {$v_4$};
       \node[font=\scriptsize, below=1pt] at (h1) {$v_3$};
       \node[font=\scriptsize, below=1pt] at (h2) {$v_2$};
       \node[font=\scriptsize, below=1pt] at (h3) {$v_1$};
       \node[font=\scriptsize, below=2pt] at (h4) {$u$};
       \node[font=\scriptsize, below=1pt] at (h5) {$u_1$};
       \node[font=\scriptsize, below=1pt] at (h6) {$u_2$};
       \node[font=\scriptsize, below=1pt] at (h7) {$u_3$};
       \node[font=\scriptsize, right=1pt] at (v1) {$w_1$};
       \node[font=\scriptsize, inner sep=0pt, right=1.5pt] at (v2) {$w_2$};
       \node[font=\scriptsize, above left=0pt] at (top_left) {$w_4$};
       \node[font=\scriptsize, above right=0pt] at (top_right) {$w_3$};
       \node[font=\scriptsize, above left=0pt] at (right_up) {$u_4$};
       \node[font=\scriptsize, below left=0pt] at (right_down) {$u_5$};
 \path (left_up) -- node[font=\scriptsize, text=blue, below=4.5pt, pos=0.30] {$2_2$} (h0);
\path (left_down) -- node[font=\scriptsize, text=blue, above=4.5pt, pos=0.30] {$2_1$} (h0);
       \path (h0) -- node[font=\scriptsize, text=blue, inner sep=0pt, above=1.5pt] {$1$} (h1);
       \path (h1) -- node[font=\scriptsize, text=red, inner sep=0pt, above=1pt] {$2_3$} (h2);
       \path (h2) -- node[font=\scriptsize, text=red, inner sep=0pt, above=1.5pt] {$1$} (h3);
       \path (h3) -- coordinate[pos=.40] (figSevenBChangeLeft) (h4);
       \node[font=\scriptsize, text=blue, inner sep=0pt, anchor=south east, xshift=-.8pt, yshift=1pt] (figSevenBOldLeft) at (figSevenBChangeLeft) {$1$};
       \node[font=\scriptsize, inner sep=0pt, text=red, anchor=south west, xshift=.8pt, yshift=.8pt] at (figSevenBChangeLeft) {$2_4$};
       \draw[semithick] (figSevenBOldLeft.south west) -- (figSevenBOldLeft.north east);
       \path (h4) -- node[font=\scriptsize, text=blue, inner sep=0pt, above=.8pt, pos=.62] {$2_2$} (h5);
       \path (h5) -- node[font=\scriptsize, text=blue, inner sep=0pt, above=1pt] {$1$} (h6);
       \path (h4) -- coordinate[pos=.64] (figSevenBChangeUp) (v1);
       \node[font=\scriptsize, text=blue, inner sep=0pt, anchor=east, xshift=-1pt] (figSevenBOldUp) at (figSevenBChangeUp) {$2_1$};
       \node[font=\scriptsize, inner sep=0pt, text=red, anchor=west, xshift=1pt] at (figSevenBChangeUp) {$1$};
       \draw[semithick] (figSevenBOldUp.south west) -- (figSevenBOldUp.north east);
       \path (v1) -- node[font=\scriptsize, text=blue, right=1pt] {$2_3$} (v2);
       \path (top_left) -- node[font=\scriptsize, text=blue, fill=white, inner sep=.3pt, below left=1.2pt, pos=.38] {$2_4$} (v2);
       \path (top_right) -- node[font=\scriptsize, text=blue, fill=white, inner sep=.3pt, below right=1.2pt, pos=.38] {$1$} (v2);
       \node[font=\footnotesize] at (5.25,-2.2) {(2)};
   \end{tikzpicture} \hfill
   \begin{tikzpicture}[
       scale=0.36,
       baseline=0,
       dot/.style={circle, draw=black, fill=black, inner sep=0pt, minimum size=2.8pt},
        execute at begin picture={\def\scriptsize{\fontsize{6}{7.5}\selectfont}},
       elabel/.style={inner sep=1.5pt}
   ]
       \coordinate (h0) at (0,0);
       \coordinate (h1) at (1.5,0);
       \coordinate (h2) at (3.0,0);
       \coordinate (h3) at (4.5,0);
       \coordinate (h4) at (6.0,0);
       \coordinate (h5) at (7.5,0);
       \coordinate (h6) at (9.0,0);
       \coordinate (h7) at (10.5,0);

       \coordinate (v1) at (6.0,1.8);

       \path (h0) ++(135:1.5) coordinate (left_up);
       \path (h0) ++(225:1.5) coordinate (left_down);

       \path (h7) ++(45:1.5) coordinate (right_up);
       \path (h7) ++(-45:1.5) coordinate (right_down);

       \path (v1) ++(135:1.5) coordinate (top_left);
       \path (v1) ++(45:1.5) coordinate (top_right);

       \draw[semithick] (left_up) -- (h0) -- (left_down);
       \draw[semithick] (h0) -- (h7);
       \draw[semithick] (right_up) -- (h7) -- (right_down);
       \draw[semithick] (h4) -- (v1);
       \draw[semithick] (top_left) -- (v1) -- (top_right);

       \node[dot] at (left_up) {};
       \node[dot] at (left_down) {};
       \node[dot] at (right_up) {};
       \node[dot] at (right_down) {};
       \node[dot] at (top_left) {};
       \node[dot] at (top_right) {};

       \node[dot] at (h0) {};
       \node[dot] at (h1) {};
       \node[dot] at (h2) {};
       \node[dot] at (h3) {};
       \node[dot] at (h4) {};
       \node[dot] at (h5) {};
       \node[dot] at (h6) {};
       \node[dot] at (h7) {};

       \node[dot] at (v1) {};

       \node[font=\scriptsize, above right=0pt] at (left_up) {$v_5$};
       \node[font=\scriptsize, below right=0pt] at (left_down) {$v_6$};
       \node[font=\scriptsize, below=1pt] at (h0) {$v_4$};
       \node[font=\scriptsize, below=1pt] at (h1) {$v_3$};
       \node[font=\scriptsize, below=1pt] at (h2) {$v_2$};
       \node[font=\scriptsize, below=1pt] at (h3) {$v_1$};
       \node[font=\scriptsize, below=2pt] at (h4) {$u$};
       \node[font=\scriptsize, below=1pt] at (h5) {$u_1$};
       \node[font=\scriptsize, below=1pt] at (h6) {$u_2$};
       \node[font=\scriptsize, below=1pt] at (h7) {$u_3$};
       \node[font=\scriptsize, inner sep=0pt,  right=1.5pt] at (v1) {$w_1$};
       \node[font=\scriptsize, above left=0pt] at (top_left) {$w_3$};
       \node[font=\scriptsize, above right=0pt] at (top_right) {$w_2$};
       \node[font=\scriptsize, above left=0pt] at (right_up) {$u_4$};
       \node[font=\scriptsize, below left=0pt] at (right_down) {$u_5$};
 \path (left_up) -- node[font=\scriptsize, text=blue, below=4.5pt, pos=0.30] {$2_2$} (h0);
\path (left_down) -- node[font=\scriptsize, text=blue, above=4.5pt, pos=0.30] {$2_1$} (h0);
       \path (h0) -- node[font=\scriptsize, text=blue, inner sep=0pt,  above=1pt] {$1$} (h1);
       \path (h1) -- node[font=\scriptsize, text=red, inner sep=0pt,  above=1pt] {$2_3$} (h2);
       \path (h2) -- node[font=\scriptsize, text=red, inner sep=0pt,  above=1pt] {$1$} (h3);
       \path (h3) -- coordinate[pos=.40] (figSevenCChangeLeft) (h4);
       \node[font=\scriptsize, text=blue, inner sep=0pt, anchor=south east, xshift=-.8pt, yshift=1pt] (figSevenCOldLeft) at (figSevenCChangeLeft) {$1$};
       \node[font=\scriptsize, inner sep=0pt, text=red, anchor=south west, xshift=.8pt, yshift=.8pt] at (figSevenCChangeLeft) {$2_1$};
       \draw[semithick] (figSevenCOldLeft.south west) -- (figSevenCOldLeft.north east);
       \path (h4) -- node[font=\scriptsize, text=blue, inner sep=0pt,  above=.8pt, pos=.62] {$2_2$} (h5);
       \path (h5) -- node[font=\scriptsize, text=blue, inner sep=0pt,  above=1pt] {$1$} (h6);
       \path (h4) -- coordinate[pos=.64] (figSevenCChangeUp) (v1);
       \node[font=\scriptsize, text=blue, inner sep=0pt, anchor=east, xshift=-1pt] (figSevenCOldUp) at (figSevenCChangeUp) {$2_1$};
       \node[font=\scriptsize, inner sep=0pt, text=red, anchor=west, xshift=1pt] at (figSevenCChangeUp) {$1$};
       \draw[semithick] (figSevenCOldUp.south west) -- (figSevenCOldUp.north east);
       \path (top_left) -- node[font=\scriptsize, text=blue, fill=white, inner sep=.3pt, below left=1.2pt, pos=.38] {$2_4$} (v1);
       \path (top_right) -- node[font=\scriptsize, text=blue, fill=white, inner sep=.3pt, below right=1.2pt, pos=.38] {$2_3$} (v1);
       \node[font=\footnotesize] at (5.25,-2.2) {(3)};
   \end{tikzpicture} 
   \hfill\,
\caption{Lemma~\ref{no322}, Lemma~\ref{no321} and Lemma~\ref{no320}}
\label{lemma 4.9, 4.10 and 4.11}
\end{center}
\vspace{-8mm}
\end{figure}

\begin{lemma}\label{no322}
If a $3$-vertex $u$ is adjacent to a $3$-thread, then it cannot be adjacent to two $2$-threads.
\end{lemma}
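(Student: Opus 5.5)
We argue by contradiction, following the template of Lemmas~\ref{no333}--\ref{no330}. Suppose $u$ is adjacent to a $3$-thread $uv_1v_2v_3v_4$ and two $2$-threads $uw_1w_2w_3$ and $uu_1u_2u_3$, where $v_4,w_3,u_3$ are $3$-vertices. Let $N(v_4)=\{v_3,v_5,v_6\}$, $N(w_3)=\{w_2,w_4,w_5\}$ and $N(u_3)=\{u_2,u_4,u_5\}$. We delete $v_2$ to obtain a subcubic planar graph $G'$ of girth at least $16$. By minimality, $G'$ has a good coloring $f$. Up to symmetry,
\[
(f(uv_1),f(v_3v_4))\in\{(1,1),(1,2_1),(2_1,1),(2_1,2_1),(2_1,2_2)\}.
\]
The last four cases only use the local structure at $u$ and $v_4$ together with Condition~1 (recolor $uv_1$ or $v_3v_4$ with $1$ if possible, then put $1$ on $v_1v_2$ or $v_2v_3$ and a free $2$-color on the other edge). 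So they go through exactly as in Lemma~\ref{no333}, and the real work is the case $f(uv_1)=f(v_3v_4)=1$.

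In that case, Claim~\ref{claim 1} applies because $v_1v_2v_3$ is a $3$-chain. Hence $\{f(uw_1),f(uu_1)\}=\{f(v_4v_5),f(v_4v_6)\}$, and we may assume $f(uw_1)=f(v_4v_5)=2_1$ and $f(uu_1)=f(v_4v_6)=2_2$. Since $w_1,w_2$ are $2$-vertices and $u_1,u_2$ are $2$-vertices, the second part of the claim gives $f(w_1w_2)=f(u_1u_2)=1$. The plan is then the same recoloring as in Lemma~\ref{no332}: recolor $uw_1$ with a color $2_x\in\{2_3,2_4\}\setminus\{f(w_2w_3)\}$, color $v_1v_2$ with $2_1$ and $v_2v_3$ with $2_x$. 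These are valid, since $v_1v_2$ now sees $1,2_2,2_x$ near $u$ and only $1$ and $v_3$'s side otherwise, while $v_2v_3$ sees $2_1,2_2$ at $v_4$ and $1$.

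The main obstacle is that $w_2w_3$ is now the end edge of a $2$-thread at the $3$-vertex $w_3$, whereas in Lemma~\ref{no332} the thread through $w$ was longer. So we must verify that the new coloring still satisfies Conditions~2 and~3. For Condition~2 at $w_1$ and $w_2$, note that $w_2$ sees $1$ and the colors at $w_3$. If $f(w_2w_3)$ together with $f(w_3w_4),f(w_3w_5)$ and $2_x$ covers all four $2$-colors, we first try the other choice in $\{2_3,2_4\}$. If both choices fail, this pins down $f(w_2w_3)$ and $\{f(w_3w_4),f(w_3w_5)\}$. We then instead apply the symmetric move on the $u$-side, recoloring $uu_1$ and coloring $v_1v_2$ with $2_2$.

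For Condition~3 on the $2$-threads $uw_1w_2w_3$ and $uu_1u_2u_3$, after recoloring both edges at $u$ other than $uv_1$ must not receive the same pair of $2$-colors as the edges at $w_3$ (respectively $u_3$). Note that $uv_1$ has color $1$, so Condition~3 is not even triggered at $u$ unless we recolor $uv_1$. This observation should dispose of Condition~3 in this case.

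In the remaining residual configuration, where both sides are blocked, I would fall back on the swap used in Claim~\ref{claim 1}, Case~(ii). We recolor $w_1w_2$ or $u_1u_2$ with a $2$-color, put $1$ on $uw_1$ (or $uu_1$), give $uv_1$ the freed $2$-color, then color $v_1v_2$ with $1$ and $v_2v_3$ with the remaining $2$-color. Checking that the number of $1$-edges does not drop keeps Condition~1, which yields the contradiction.
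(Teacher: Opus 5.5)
Your reduction to the case $f(uv_1)=f(v_3v_4)=1$, your use of Claim~\ref{claim 1}, the first recoloring, and the ``try the other color'' step when $f(w_2w_3)=2_2$ all match the paper. The gap is in your fallback. The configuration left over is, up to swapping $2_3,2_4$, $f(w_2w_3)=2_3$ and $\{f(w_3w_4),f(w_3w_5)\}=\{2_1,2_2\}$, possibly together with the analogous blocked configuration on the $u$-side. There, your fallback fails.

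Suppose you put $1$ on $uw_1$ and recolor $w_1w_2$ with a $2$-color while leaving $w_2w_3$ unchanged. Then $w_1w_2$ is within distance two of $2_3$ (on $w_2w_3$), of $2_1,2_2$ (at $w_3$), and of $2_2$ (on $uu_1$), so it is forced to take $2_4$. But then $w_2$ sees the $1$ on $uw_1$ together with $2_1,2_2,2_3,2_4$, which violates Condition~2. The $u_1u_2$ version fails at $u_2$ in exactly the same way whenever that side is also blocked.

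There is a second problem. Giving $uv_1$ the ``freed'' color $2_1$ yields $\{f(uv_1),f(uu_1)\}=\{2_1,2_2\}=\{f(w_3w_4),f(w_3w_5)\}$. This violates Condition~3 on the thread $uw_1w_2w_3$. Your remark that Condition~3 is not triggered at $u$ only holds while $uv_1$ keeps color $1$, and the fallback changes exactly that.

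The missing idea is to push the $1$ further down the $w$-thread:
\begin{itemize}
\item recolor $uw_1$ with $1$;
\item recolor $w_1w_2$ with $2_3$, the old color of $w_2w_3$;
\item recolor $w_2w_3$ with $1$, which is legal because both other edges at $w_3$ carry $2_1,2_2$;
\item recolor $uv_1$ with $2_4$, not $2_1$;
\item then color $v_1v_2$ with $1$ and $v_2v_3$ with $2_3$.
\end{itemize}
Now $w_2$ sees only the $2$-colors $2_1,2_2,2_3$. Condition~3 on $uw_1w_2w_3$ holds since $\{2_4,2_2\}\neq\{2_1,2_2\}$. The remaining $2$-vertices $w_1,v_1,v_2,v_3,u_1$ each see at most three $2$-colors. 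This is the paper's move. It settles the blocked $w$-side directly, so your detour through the symmetric $u$-side move is unnecessary, and on its own that detour cannot close the argument.
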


\begin{proof}
Suppose not, we say $u$ is adjacent to two $2$-threads $uw_1w_2w_3$ and $uu_1u_2u_3$, and a $3$-thread $uv_1v_2v_3v_4$, where $w_3, u_3$, and $v_4$ are $3$-vertices. We delete $v_2$ to obtain a subcubic planar graph $G'$ with girth at least $16$. By the minimality of $G$, $G'$ has a good coloring $f$. Let $N(w_3) = \{w_2, w_4, w_5\}$ and $N(v_4) = \{v_3, v_5, v_6\}$. By symmetry, $f(uv_1), f(v_3v_4) = 1, 1$ or $1, 2_1$ or $2_1, 1$ or $2_1, 2_1$ or $2_1, 2_2$.

In case $f(uv_1), f(v_3v_4) = 1, 1$, by Claim \ref{claim 1}, we may assume without loss of generality that $f(uw_1) = f(v_4v_5) = 2_1$ and $f(uu_1) = f(v_4v_6) = 2_2$. Furthermore, it holds that $f(u_1u_2) = 1$ and $f(w_1w_2) = 1$.

By symmetry, $f(w_2w_3)=2_2$ or $2_3$. In the former case, we can recolor $uw_1$ with $2_4$, color $v_1v_2$ with $2_1$, and color $v_2v_3$ with $2_4$ to obtain a good coloring (see Figure~\ref{lemma 4.9, 4.10 and 4.11} (1)) unless $w_2$ sees all five colors. However, we can recolor $uw_1$ and $v_2v_3$ with $2_3$ to obtain a good coloring. In the latter case, we can recolor $uw_1$ with $2_4$, color $v_1v_2$ with $2_1$, and color $v_2v_3$ with $2_4$ to obtain a good coloring unless $w_2$ sees all five colors. In this case, $\{f(w_3w_4), f(w_3w_5)\} = \{2_1,2_2\}$. However, we can recolor $uw_1$ with $1$, $w_1w_2$ with $2_3$, $w_2w_3$ with $1$, $uv_1$ with $2_4$, $v_1v_2$ with $1$ and $v_2v_3$ with $2_3$ to obtain a good coloring.

The proofs for the remaining four cases are identical to that of Lemma \ref{no333}. This is a contradiction.
\end{proof}

\begin{lemma}\label{no321}
If a $3$-vertex $u$ is adjacent to a $3$-thread and a $2$-thread, then it cannot be adjacent to a $1$-thread.
\end{lemma}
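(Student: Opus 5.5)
We argue by contradiction, following the template of Lemmas~\ref{no333}--\ref{no322}. Suppose $u$ is adjacent to a $3$-thread $uv_1v_2v_3v_4$, a $2$-thread $uw_1w_2w_3$ and a $1$-thread $uu_1u_2$, where $v_4,w_3,u_2$ are $3$-vertices. Let $N(v_4)=\{v_3,v_5,v_6\}$, $N(w_3)=\{w_2,w_4,w_5\}$ and $N(u_2)=\{u_1,u_3,u_4\}$. We delete $v_2$ to obtain a subcubic planar graph $G'$ with girth at least $16$; by minimality, $G'$ has a good coloring $f$. By symmetry, $(f(uv_1),f(v_3v_4))$ is one of $(1,1)$, $(1,2_1)$, $(2_1,1)$, $(2_1,2_1)$, $(2_1,2_2)$. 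The last four cases are handled verbatim as in Lemma~\ref{no333}: either some edge among $uv_1$, $v_3v_4$ can be recolored with $1$ (reducing to an earlier case), or we color one of $v_1v_2,v_2v_3$ with $1$ and the other with one of the at least one remaining $2$-colors. Only the case $(1,1)$ needs new work.

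In the case $(1,1)$, Claim~\ref{claim 1} applies, since it only uses the $3$-thread through $v_2$. So we may assume $\{f(uw_1),f(uu_1)\}=\{f(v_4v_5),f(v_4v_6)\}=\{2_1,2_2\}$. Because $w_1,w_2$ are $2$-vertices, the second part of Claim~\ref{claim 1} (applied to the branch $uw_1w_2$) gives $f(w_1w_2)=1$. Unlike Lemma~\ref{no322}, the assignment of $2_1,2_2$ to $uw_1,uu_1$ is no longer symmetric. We therefore split into two subcases, $f(uw_1)=2_1,f(uu_1)=2_2$ and its reverse, and within each, split on $f(u_1u_2)$.

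\textbf{Subcase $f(u_1u_2)$ is a $2$-color}, say $2_3$. As in Lemma~\ref{no331}, we recolor $uu_1$ with $1$ and $uv_1$ with $2_4$, then color $v_1v_2$ with $1$ and $v_2v_3$ with $2_3$. This is legal since $w_1w_2$ has color $1$ but is not adjacent to $uu_1$. It remains to check Conditions~2 and~3 at $w_1$, and Condition~3 on the $2$-thread $uw_1w_2w_3$. If these fail, the fallback is to swap the roles of $2_3$ and $2_4$, i.e.\ use $uv_1=2_3$ after recoloring $u_1u_2$ with $2_4$, as in Case (ii.A) of Claim~\ref{claim 1}.

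\textbf{Subcase $f(u_1u_2)=1$.} Condition~2 at $u_1$ forces one of $f(u_2u_3),f(u_2u_4)$ to equal the color at $uw_1$. We then recolor $uu_1$ with a color from $\{2_3,2_4\}\setminus\{f(u_2u_3),f(u_2u_4)\}$, color $v_1v_2$ with $f(v_4v_5)$ (whichever of $2_1,2_2$ now appears only at $uw_1$), and color $v_2v_3$ with the new color of $uu_1$. If this recoloring is blocked, we instead recolor $uw_1$ with a color in $\{2_3,2_4\}\setminus\{f(w_2w_3)\}$, as in Lemma~\ref{no322}.

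The main obstacle is the interaction of Condition~2 at $w_2$ and Condition~3 on the $2$-thread $uw_1w_2w_3$ after $uw_1$ or $uu_1$ is recolored. After recoloring, $w_2$ may see all five colors, or the pairs of $2$-colors at $u$ and $w_3$ may coincide. I would handle this as in Lemma~\ref{no322}. First, try the other free color in $\{2_3,2_4\}$. If that also fails, then $\{f(w_3w_4),f(w_3w_5)\}$ is determined. In that situation, shift the $1$-color along the $2$-thread: set $uw_1=1$, $w_1w_2=2_x$, $w_2w_3=1$, $uv_1=2_y$, $v_1v_2=1$, $v_2v_3=2_x$. Condition~1 is preserved because the number of $1$-edges does not decrease. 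Every branch yields a good coloring of $G$, a contradiction.
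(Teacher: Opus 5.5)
Your argument is the paper's own. You delete $v_2$, handle the four cases other than $(f(uv_1),f(v_3v_4))=(1,1)$ as in Lemma~\ref{no333}, and in the $(1,1)$ case invoke Claim~\ref{claim 1} and split on whether the $1$-thread edge $u_1u_2$ is colored $1$. Your recolorings are exactly the paper's; your $u_1$ and $w_1$ are the paper's $w_1$ and $u_1$.

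Three things should be corrected.
\begin{itemize}
\item The split according to which of $uw_1,uu_1$ carries $2_1$ is unnecessary. Renaming $2_1\leftrightarrow 2_2$ (and $v_5\leftrightarrow v_6$) exchanges the two subcases, which is why the paper can simply fix $f(uw_1)=f(v_4v_5)$.
\item Let $b$ be the original color of $uu_1$. In the subcase $f(u_1u_2)=1$, the edge $v_1v_2$ must receive $b$, the one of $2_1,2_2$ that no longer appears at $u$. Your parenthetical, read literally, gives $v_1v_2$ the color of $uw_1$. That is invalid, since $v_1v_2$ and $uw_1$ are at distance $2$.
\item The verifications you defer are routine, so the fallbacks should be dropped. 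This includes the unverified shift of the $1$-color along $uw_1w_2w_3$.
\end{itemize}

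The obstacle you anticipate never arises:
\begin{itemize}
\item In both moves an edge at $u$ other than $uw_1$ ends up colored $1$ ($uu_1$ in the first, $uv_1$ in the second). So Condition~3 on $uw_1w_2w_3$ is vacuous.
\item No edge seen by $w_2$ is recolored.
\item $w_1$ sees at most three $2$-colored edges, since $w_1w_2$ and one of $uv_1,uu_1$ are colored $1$.
\end{itemize}

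The check that actually needs an argument, which you do not mention, is Condition~2 at $u_1$ in the subcase $f(u_1u_2)=2_3$, because $u_1$ now sees the $1$-color on $uu_1$. It holds because $b$ cannot occur on $u_2u_3$ or $u_2u_4$, both of which were within distance $2$ of $uu_1$. Nor is $b$ on any edge at $u$ after the recoloring, so $u_1$ misses $b$. This is exactly the paper's remark that $2_1\notin\{f(w_2w_3),f(w_2w_4)\}$. The same reasoning covers $u_1$ after the second move, and the checks at $v_1,v_2,v_3$ are immediate. With these repairs your proof coincides with the paper's.
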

\begin{proof}
Suppose not, say $u$ is adjacent to a $3$-thread $uv_1v_2v_3v_4$, a $2$-thread $uu_1u_2u_3$, and a $1$-thread $uw_1w_2$, where $v_4, u_3$, and $w_2$ are $3$-vertices. We delete $v_2$ to obtain a subcubic planar graph $G'$ with girth at least $16$. By the minimality of $G$, $G'$ has a good coloring $f$. Let $N(w_2) = \{w_1, w_3, w_4\}, N(u_3) = \{u_2, u_4, u_5\}$, and $N(v_4) = \{v_3, v_5, v_6\}$. By symmetry, $f(uv_1), f(v_3v_4) = 1, 1$ or $1, 2_1$ or $2_1, 1$ or $2_1, 2_1$ or $2_1, 2_2$.

In case $f(uv_1), f(v_3v_4) = 1, 1$, by Claim \ref{claim 1}, we may assume without loss of generality that $f(uw_1) = f(v_4v_5) = 2_1$ and $f(uu_1) = f(v_4v_6) = 2_2$. Furthermore, it holds that $f(u_1u_2) = 1$.

If $f(w_1w_2)$ is a $2$-color, say $f(w_1w_2) = 2_3$, then by Condition 1, one of $f(w_2w_3)$ and $f(w_2w_4)$ is 1. Since $2_1 \notin \{f(w_2w_3),f(w_2w_4)\}$, we can recolor $uw_1$ with 1 and $uv_1$ with $2_4$, and color $v_1v_2$ with 1 and color $v_2v_3$ with $2_3$ to obtain a good coloring (see Figure~\ref{lemma 4.9, 4.10 and 4.11} (2)). Therefore, $f(w_1w_2) = 1$. By applying Condition 2 at $w_1$ and symmetry, we may assume $\{f(w_2w_3), f(w_2w_4)\} = \{2_2, 2_3\}$. We recolor $uw_1$ with $2_4$, and color $v_1v_2$ with $2_1$, and $v_2v_3$ with $2_4$ to obtain a good coloring. 

The proofs for the remaining four cases are identical to that of Lemma~\ref{no333}. This is a contradiction.
\end{proof}

\begin{lemma}\label{no320}
A $3$-vertex cannot be adjacent to a $3$-thread and a $2$-thread at the same time.
\end{lemma}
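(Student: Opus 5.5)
We argue by contradiction, following the template of Lemmas~\ref{no322} and~\ref{no321}. Suppose $u$ is adjacent to a $3$-thread $uv_1v_2v_3v_4$ and a $2$-thread $uw_1w_2w_3$, where $v_4,w_3$ are $3$-vertices, and let $u_1$ be the third neighbour of $u$. By Lemma~\ref{mindegree Thm2}, $u_1$ has degree $2$ or $3$. If $u_1$ is a $2$-vertex, it lies on a $k$-thread starting at $u$ with $1\le k\le 3$ by Lemma~\ref{no 4-chain}. The cases $k=3$, $k=2$ and $k=1$ are excluded by Lemmas~\ref{no330}, \ref{no322} and~\ref{no321}, respectively. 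Hence $u_1$ is a $3$-vertex; write $N(u_1)=\{u,u_2,u_3\}$, $N(w_3)=\{w_2,w_4,w_5\}$ and $N(v_4)=\{v_3,v_5,v_6\}$.

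We delete $v_2$ and take a good coloring $f$ of $G'$ by minimality. Up to symmetry, $(f(uv_1),f(v_3v_4))$ is one of $(1,1)$, $(1,2_1)$, $(2_1,1)$, $(2_1,2_1)$, $(2_1,2_2)$. The last four cases are handled exactly as in Lemma~\ref{no333}. In each, either a $1$ can be pushed onto $uv_1$ or $v_3v_4$ (which reduces to an earlier case), or $v_1v_2$ or $v_2v_3$ takes $1$ and the other edge sees at most three $2$-colors.

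In the case $(1,1)$, Claim~\ref{claim 1} applies because $v_1v_2v_3$ is a $3$-thread. It gives $\{f(uw_1),f(uu_1)\}=\{f(v_4v_5),f(v_4v_6)\}=\{2_a,2_b\}$, and since $w_1,w_2$ are $2$-vertices, also $f(w_1w_2)=1$. Two subcases follow, according to whether $f(uw_1)$ equals $f(v_4v_5)$ or $f(v_4v_6)$; by renaming assume $f(uw_1)=2_1=f(v_4v_5)$ and $f(uu_1)=2_2=f(v_4v_6)$.

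\textbf{Step 1.} We may assume $1\in\{f(u_1u_2),f(u_1u_3)\}$. Otherwise $\{f(u_1u_2),f(u_1u_3)\}=\{2_3,2_4\}$, and we swap the colors of $uu_1$ and $uv_1$, color $v_1v_2$ with $1$ and $v_2v_3$ with $2_3$, as in Lemma~\ref{no330}. So say $\{f(u_1u_2),f(u_1u_3)\}=\{1,2_3\}$.

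\textbf{Step 2.} Next, $f(w_2w_3)=2_4$. Otherwise we recolor $uw_1$ with $2_4$, color $v_1v_2$ with $2_1$ and $v_2v_3$ with $2_4$, and the result is good unless Condition~2 fails at $w_2$. That failure forces $\{f(w_3w_4),f(w_3w_5)\}$ to be specific colors.

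\textbf{Step 3.} Now analyse $w_3$, a $3$-vertex with $f(w_2w_3)=2_4$. By Condition~1 one of $f(w_3w_4),f(w_3w_5)$ is $1$, since otherwise $w_2w_3$ could be recolored $1$ after shifting $w_1w_2$. We then perform the recoloring chain from Lemma~\ref{no322} and Lemma~\ref{no330}: $w_1w_2\to 2_4$, $w_2w_3\to 1$ or a free $2$-color, $uw_1\to 1$, $uv_1\to 2_1$ (or $2_4$), $v_1v_2\to 1$, and $v_2v_3\to$ a $2$-color not seen, for instance $2_3$ or $2_4$.

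\textbf{Main obstacle.} The hard step is the final recoloring. Unlike Lemma~\ref{no330}, the thread at $w$ is only a $2$-thread, so Condition~3 at $uw_1w_2w_3$ and Condition~2 at $w_1,w_2$ must be rechecked after every swap. At the same time $u_1$ is a $3$-vertex seeing $1$ and $2_3$, which blocks $2_3$ on $v_2v_3$ when $uv_1$ is recolored. We would split on whether $\{f(w_3w_4),f(w_3w_5)\}$ equals $\{1,2_3\}$ or $\{1,2_x\}$ with $2_x\neq 2_3$. In each branch we choose $v_2v_3\in\{2_3,2_4\}$ and $uv_1\in\{2_1,2_2\}$ so that neither Condition~3 on the $2$-thread nor Condition~2 is violated. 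If a branch still fails, the fallback is to exchange the roles of $uw_1$ and $uu_1$ in the initial symmetric assignment from Claim~\ref{claim 1}.
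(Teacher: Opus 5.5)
\textbf{There is a genuine gap: the core of the $(1,1)$ case is never carried out, and one of its reductions is false.} Your setup matches the paper, with the roles of $u_1$ and $w_1$ interchanged. This covers excluding a $2$-vertex $u_1$ via Lemmas~\ref{no330}, \ref{no322} and~\ref{no321}, the four easy cases, Claim~\ref{claim 1} in the $(1,1)$ case, and your Step~1. The problems are in the rest of the $(1,1)$ case.

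\emph{Step~2 is unfinished.} You correctly note that recoloring $uw_1$ with $2_4$ fails only through Condition~2 at $w_2$. That happens exactly when $f(w_2w_3)=2_2$ and $\{f(w_3w_4),f(w_3w_5)\}=\{2_1,2_3\}$, or when $f(w_2w_3)=2_3$ and $\{f(w_3w_4),f(w_3w_5)\}=\{2_1,2_2\}$. You never resolve these two configurations, so $f(w_2w_3)=2_4$ is not established.

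\emph{Step~3 uses Condition~1 incorrectly.} Since $w_1w_2$ already carries $1$, moving that $1$ to $w_2w_3$ does not increase the number of $1$-edges. So maximality forces no $1$ at $w_3$. Nothing excludes $\{f(w_3w_4),f(w_3w_5)\}=\{2_1,2_2\}$ or $\{2_1,2_3\}$; only $\{2_2,2_3\}$ is excluded, by Condition~2 at $w_2$ in $G'$. These two are precisely the cases where the option $w_2w_3\to 1$ in your chain is needed.

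\emph{The recoloring chain is never pinned down or checked.} The obstacle you name is also not real. The edges $u_1u_2,u_1u_3$ are at distance $4$ from $v_2v_3$ and are not seen by $v_1,v_2,v_3$; the $2_3$ at $u_1$ only forbids $2_3$ on $uv_1$ and $uw_1$. Your fallback of exchanging $uw_1$ and $uu_1$ is just a relabeling of $v_5,v_6$, so it closes no branch.

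\textbf{The missing idea is that one move closes every subcase.} Shift the $1$ from $w_1w_2$ onto $uw_1$. This frees $uv_1$ for a $2$-color, after which you set $v_1v_2\to 1$ and $v_2v_3\to 2_3$. What varies is the new color of $w_1w_2$, whether $w_2w_3$ becomes $1$, and whether $uv_1$ gets $2_1$ or $2_4$. In every case below, also set $uw_1\to 1$, $v_1v_2\to 1$, $v_2v_3\to 2_3$; each case must then be checked for packing distances and for Conditions~2--3 at $v_1,v_2,v_3,w_1,w_2$.
\begin{itemize}
\item If $f(w_2w_3)=2_2$ and $w_3$ sees $\{2_1,2_3\}$: set $w_1w_2\to 2_4$, $w_2w_3\to 1$, $uv_1\to 2_1$.
\item If $f(w_2w_3)=2_3$ and $w_3$ sees $\{2_1,2_2\}$: swap the colors of $w_1w_2$ and $w_2w_3$, and set $uv_1\to 2_4$.
\item If $f(w_2w_3)=2_4$ and $w_3$ sees $\{1,2_1\}$: set $w_1w_2\to 2_3$, $uv_1\to 2_4$.
\item If $f(w_2w_3)=2_4$ and $w_3$ sees $\{1,2_2\}$ or $\{1,2_3\}$: set $w_1w_2\to 2_1$, $uv_1\to 2_4$.
\item If $f(w_2w_3)=2_4$ and $w_3$ sees $\{2_1,2_2\}$: set $w_2w_3\to 1$, $w_1w_2\to 2_3$, $uv_1\to 2_4$.
\item If $f(w_2w_3)=2_4$ and $w_3$ sees $\{2_1,2_3\}$: set $w_2w_3\to 1$, $w_1w_2\to 2_4$, $uv_1\to 2_1$.
\end{itemize}
This explicit case analysis is the substance of the lemma. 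Without it, your proposal is a plan rather than a proof.
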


\begin{proof}
Suppose not, say $u$ is adjacent to a $3$-thread $uv_1v_2v_3v_4$, a $2$-thread $uu_1u_2u_3$, and another vertex $w_1$, where $v_4$, $u_3$ are $3$-vertices. By Lemmas~\ref{no330},~\ref{no322} and~\ref{no321}, $w_1$ is a $3$-vertex. We delete $v_2$ to obtain a subcubic planar graph $G'$ with girth at least $16$. By the minimality of $G$, $G'$ has a good coloring $f$. Let $N(w_1) = \{u, w_2, w_3\}$, $N(u_3) = \{u_2, u_4, u_5\}$, and $N(v_4) = \{v_3, v_5, v_6\}$. 
By symmetry, $f(uv_1), f(v_3v_4) = 1, 1$ or $1, 2_1$ or $2_1, 1$ or $2_1, 2_1$ or $2_1, 2_2$.

In case $f(uv_1), f(v_3v_4) = 1, 1$, by Claim \ref{claim 1}, we may assume without loss of generality that $f(uw_1) = f(v_4v_5) = 2_1$ and $f(uu_1) = f(v_4v_6) = 2_2$. Furthermore, it holds that $f(u_1u_2) = 1$.

We claim $1 \in \{f(w_1w_2), f(w_1w_3)\}$, since otherwise, $\{f(w_1w_2), f(w_1w_3)\} = \{2_3, 2_4\}$, we can switch the colors of $uv_1$ and $uw_1$, then color $v_1v_2$ with $1$, and $v_2v_3$ with $2_3$ to obtain a good coloring (see Figure~\ref{lemma 4.9, 4.10 and 4.11} (3)). By symmetry, we may assume $\{f(w_1w_2), f(w_1w_3)\} = \{1, 2_3\}$.

\textbf{Subcase (i):} $f(u_2u_3) = 2_1$. We claim $\{f(u_3u_4), f(u_3u_5)\} = \{2_2, 2_3\}$, since otherwise, we can recolor $uu_1$ with $2_4$, and then color $v_1v_2$ with $2_2$ and $v_2v_3$ with $2_4$ to obtain a good coloring. However, we can recolor $u_1u_2$ with $2_4$, $u_2u_3$ and $uu_1$ with $1$, and $uv_1$ with $2_2$. Subsequently, we color $v_1v_2$ with $1$ and $v_2v_3$ with $2_3$ to obtain a good coloring.

\textbf{Subcase (ii):} $f(u_2u_3) = 2_3$. We claim $\{f(u_3u_4), f(u_3u_5)\} = \{2_1, 2_2\}$, since otherwise, we can recolor $uu_1$ with $2_4$, and then color $v_1v_2$ with $2_2$ and color $v_2v_3$ with $2_4$ to obtain a good coloring. However, we can switch the colors of $u_1u_2$ and $u_2u_3$, recolor $uu_1$ with $1$ and $uv_1$ with $2_4$. Subsequently, we color $v_1v_2$ with $1$ and $v_2v_3$ with $2_3$ to obtain a good coloring.

\textbf{Subcase (iii):} $f(u_2u_3) = 2_4$. By symmetry, there are the following subcases: $f(u_3u_4), f(u_3u_5) = 1, 2_1$ or $1, 2_2$ or $1, 2_3$ or $2_1, 2_2$ or $2_2, 2_3$. 
In the first and third subcases, we can recolor $u_1u_2$ with $2_2$, $uu_1$ with $1$, and $uv_1$ with $2_4$. Then we color $v_1v_2$ with $1$ and $v_2v_3$ with $2_3$ to obtain a good coloring. 
In the second subcase, we can recolor $u_1u_2$ with $2_3$, $uu_1$ with $1$, and $uv_1$ with $2_4$. Then we color $v_1v_2$ with $1$ and $v_2v_3$ with $2_3$ to obtain a good coloring. 
In the fourth subcase, we can recolor $u_2u_3$ with $1$, $u_1u_2$ with $2_3$,  $uu_1$ with $1$, and $uv_1$ with $2_4$. Then we color $v_1v_2$ with $1$ and color $v_2v_3$ with $2_3$ to obtain a good coloring. 
In the fifth subcase, we can switch the colors of $u_1u_2$ and $u_2u_3$, and switch the colors of $uv_1$ and $uu_1$. Then we color $v_1v_2$ with $1$ and color $v_2v_3$ with $2_3$ to obtain a good coloring.

The proofs for the remaining four cases are identical to that of Lemma~\ref{no333}. This is a contradiction.
\end{proof}

Next, we establish a crucial structural property regarding the density of $3$-vertices on a facial cycle in our minimal counterexample $G$.

\begin{lemma}\label{cycle}
Let $C$ be a facial cycle of length $\ell$. There are at least $\lceil \frac{\ell}{3} \rceil$ vertices of degree 3.
\end{lemma}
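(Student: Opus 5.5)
Since $G$ is connected and $\delta(G)\ge 2$ by Lemma~\ref{mindegree Thm2}, every facial cycle $C$ is a genuine cycle of length $\ell\ge 16$. The plan is to study how the $3$-vertices and the maximal runs of $2$-vertices alternate around $C$, and then count.

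First, observe that $C$ must contain at least one $3$-vertex. Otherwise every vertex of $C$ is a $2$-vertex. Then $C$ is a whole connected component, so it equals $G$ because $G$ is connected. But then $G$ contains a $4$-chain, which contradicts Lemma~\ref{no 4-chain}. (Alternatively, a cycle is directly colorable, which contradicts the choice of $G$.)

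Next, let $x_1,\dots,x_t$ be the $3$-vertices on $C$ in cyclic order. Between consecutive ones, $C$ traverses a thread $x_i\cdots x_{i+1}$ whose internal vertices are all $2$-vertices. Each internal vertex has both of its edges on $C$, so this segment is a $k_i$-thread in $G$. By Lemma~\ref{no 4-chain} (together with the remark that every $3$-chain is a $3$-thread), we get $k_i\le 3$. Hence
$$\ell=\sum_{i=1}^t (k_i+1)\le 4t.$$
This alone only gives $t\ge \ell/4$, so the remaining work is to exclude long $3$-threads appearing consecutively along the face.

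The key refinement uses the lemmas on $3$-vertices. Lemma~\ref{no330} says no $3$-vertex is incident with two $3$-threads. Lemma~\ref{no320} says no $3$-vertex is incident with both a $3$-thread and a $2$-thread. Consider a $3$-vertex $x_i$ on $C$. It is the common endpoint of the two consecutive threads along $C$, namely those with $k_{i-1}$ and $k_i$ internal vertices. If one of these has $k=3$, then the other has $k\le 1$: it cannot be a $3$-thread by Lemma~\ref{no330}, and it cannot be a $2$-thread by Lemma~\ref{no320}.

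So in the cyclic sequence $(k_1,\dots,k_t)$, every $3$ is flanked on both sides by entries that are at most $1$. Now pair each $3$ with its following entry. Each such pair contributes at most $(3+1)+(1+1)=6$ to $\ell$ while using two $3$-vertices, so the average is at most $3$ per $3$-vertex. Every unpaired entry is at most $2$ and contributes at most $3$. These pairs are disjoint, because the entry following a $3$ is at most $1$ and hence is not itself a $3$. Therefore $\ell\le 3t$, that is, $t\ge \ell/3$, and since $t$ is an integer, $t\ge\lceil \ell/3\rceil$.

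One degenerate case needs care: $t=1$, where a single $3$-vertex is both ends of the same thread. This requires $\ell\le 4<16$, which is impossible by the girth condition. Similarly, if $t=2$, both threads share both endpoints, and the same pairing argument still applies.

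The main obstacle is verifying that the threads along a facial walk really are threads of $G$ incident to the same $3$-vertex $x_i$ in the way Lemmas~\ref{no330} and~\ref{no320} require. In particular, one must confirm that the two threads at $x_i$ are distinct. This follows from the girth bound, since a thread returning to the same vertex would create a short cycle.
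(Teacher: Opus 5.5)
Your proof is correct. It rests on exactly the same structural input as the paper's proof: no $4$-chain (Lemma~\ref{no 4-chain}), and no $3$-vertex that is an endpoint of two $3$-threads or of a $3$-thread and a $2$-thread (Lemmas~\ref{no330} and~\ref{no320}). Only the counting is organized differently.

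The paper slides a window of six consecutive vertices around $C$ and states, citing only the lemmas, that each window contains at least two $3$-vertices. A double count then gives $6n_3\ge 2\ell$. Verifying that window claim amounts to your observation. A window with at most one $3$-vertex would either contain a $4$-chain, or have its single $3$-vertex in position $3$ or $4$. In that position the vertex is flanked by a $3$-thread and a thread with at least two interior vertices, which the lemmas forbid.

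You instead cut $C$ into threads with $k_1,\dots,k_t$ interior vertices, so that $\ell=\sum_i(k_i+1)$. You then amortize by pairing each $k_i=3$ with its successor, which is forced to be at most $1$. This makes the local-to-global step fully explicit and gives $\ell\le 3t$ directly. The cost is a little bookkeeping: disjointness of the pairs and small $t$. Small $t$ is in any case excluded outright, since $\ell\le 4t$ and $\ell\ge 16$ force $t\ge 4$.

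One small remark: your opening justification, that connectivity and $\delta(G)\ge 2$ make face boundaries cycles, does not hold in general. Two cycles joined by a bridge is a counterexample. You do not need it, though, because the lemma assumes $C$ is a cycle.
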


\begin{proof}
Let $v_1, v_2, \ldots, v_\ell, v_1$ be the vertices of the facial cycle $C$. Let $n_2, n_3$ be the number of vertices of degree two and three in $C$. We count the number of $3$-vertices in each six-vertex path starting from $v_1, v_2, \ldots, v_\ell$ (e.g., the six-vertex path at $v_1$ is $v_1v_2v_3v_4v_5v_6$, \dots, the six-vertex path at $v_\ell$ is $v_\ell v_1 v_2 v_3 v_4 v_5$). Every $3$-vertex is counted exactly six times. Since $G$ is a minimal counterexample, by Lemmas \ref{mindegree Thm2}, \ref{no 4-chain}, \ref{no333}, \ref{no332}, \ref{no331}, \ref{no322}, \ref{no330}, \ref{no321} and \ref{no320}, it contains no 4-chain and no $3$-vertex adjacent to both a $3$-thread and a $2$-thread. Thus, every six-vertex path contains at least two $3$-vertices. Therefore,
$$ 6n_3 \ge 2\ell \quad \text{and} \quad n_3 \ge \frac{\ell}{3}. $$
Since $n_3$ is an integer, the claim is proved.
\end{proof}

We are now ready to prove Theorem \ref{Thm 2} by the discharging method. Let $V(G)$ and $F(G)$ be the collection of vertices and faces in $G$. By Euler's formula on connected planar graphs,
$$ 7 \sum_{v \in V(G)} \left(d(v) - \frac{16}{7}\right) + \sum_{f \in F(G)} (d(f) - 16) = -32. $$

We assign an initial charge of $\text{ch}(v) = 7d(v) - 16$ to each vertex $v \in V(G)$, and an initial charge of $\text{ch}(f) = d(f) - 16$ to each face $f \in F(G)$. Since there is no $1$-vertex and $G$ has girth at least $16$, each $2$-vertex has an initial charge of $-2$, each $3$-vertex has an initial charge of $5$, and each face has initial charge $d(f) - 16 \ge 0$. We redistribute the charges according to the following rules:

\textbf{Rule 1:} Every $3$-vertex sends $\frac{5}{3}$ to each of its incident faces.

\textbf{Rule 2:} Every face sends $1$ to each $2$-vertex incident to it.

Since each $3$-vertex is incident with three faces, its final charge is exactly $5 - 3 \cdot \frac{5}{3} = 0$. Each $2$-vertex receives $1$ from each incident face, and thus it has final charge $-2 + 2 \cdot 1 = 0$. At last, we show that every face has a nonnegative final charge. Let $F$ be a face with length $\ell \ge 16$. By Lemma \ref{cycle}, $F$ has at least $\lceil \frac{\ell}{3} \rceil$ vertices of degree 3 and at most $\ell - \lceil \frac{\ell}{3} \rceil$ vertices of degree 2. Therefore, $F$ has final charge at least
$$ \ell - 16 + \left\lceil \frac{\ell}{3} \right\rceil \cdot \frac{5}{3} - \left(\ell - \left\lceil \frac{\ell}{3} \right\rceil\right) \cdot 1 = \frac{8}{3}\left\lceil \frac{\ell}{3} \right\rceil - 16 \ge \frac{8}{3} \left\lceil \frac{16}{3} \right\rceil - 16 = \frac{8}{3} \cdot 6 - 16 = 0. $$
This is a contradiction, which completes the proof of Theorem \ref{Thm 2}. \hfill \qed

\section{Remarks and Future Directions}
It is worth pointing out that the structural properties derived so far are sufficient to guarantee a good coloring for any subcubic graph $G$ provided that $\text{mad}(G) < \frac{9}{4}$. We can confirm this maximum average degree condition through a standard discharging procedure. Let each vertex $v \in V(G)$ be assigned an initial charge $\text{ch}(v) = d(v) - \frac{9}{4}$. Under the assumption that $\text{mad}(G) < \frac{9}{4}$, the sum of all initial charges is strictly negative:
$$ \sum_{v \in V(G)} \text{ch}(v) = \sum_{v \in V(G)} \left(d(v) - \frac{9}{4}\right) = 2|E(G)| - \frac{9}{4}|V(G)| < 0. $$

Next, we transfer these charges based on the following rule:

\textbf{Rule 1:} A $3$-vertex transfers a charge of $\frac{1}{8}$ to every $2$-vertex it connects to through a path of $2$-vertices.

From Lemma \ref{mindegree Thm2}, we know that $G$ has no $1$-vertices. Thus, we only need to verify the final charge, denoted by $\text{ch}^*(v)$, for vertices of degree 2 and 3. According to Lemma \ref{no 4-chain}, \ref{no333}, \ref{no332}, \ref{no331}, and \ref{no322}, any $3$-vertex can reach a maximum of six $2$-vertices via paths of $2$-vertices. As a result, the final charge for an arbitrary $3$-vertex $u$ satisfies $\text{ch}^*(u) \ge 3 - 6 \cdot \frac{1}{8} - \frac{9}{4} = 0$. On the other hand, every $2$-vertex $v$ is linked to exactly two $3$-vertices through paths of $2$-vertices. Its final charge is therefore $\text{ch}^*(v) = 2 + 2 \cdot \frac{1}{8} - \frac{9}{4} = 0$.

Consequently, the total final charge must be non-negative:
$$ 0 \le \sum_{v \in V(G)} \text{ch}^*(v) = \sum_{v \in V(G)} \text{ch}(v) < 0, $$
yielding an obvious contradiction.

Recall from a standard application of Euler's formula that any planar graph with a girth of $g$ will satisfy $\text{mad}(G) < \frac{2g}{g-2}$. Our discharging procedure, however, implies that a minimal counterexample must have $\text{mad}(G) \ge \frac{9}{4}$. By combining these two bounds, we obtain $\frac{2g}{g-2} > \frac{9}{4}$, which strictly forces $g < 18$. Therefore, a minimal counterexample cannot exist when the girth is 18 or larger. This establishes that any subcubic planar graph with girth at least 18 admits a good coloring.

Gastineau and Togni~\cite{GT1} asked the open question ``is it true that all cubic graphs of girth at least $5$ are $(1,2^5)$-packing edge-colorable?''. We believe the answer is true for subcubic planar graphs. 

\begin{conj}
Every subcubic planar graph of girth at least $5$ is $(1,2^5)$-packing edge-colorable.   
\end{conj}

We end this paper by proposing another open question regarding $(1,2^4)$-packing edge-coloring.

\begin{pblm}
What is the minimum positive integer $k_2$ such that every subcubic planar graph of girth at least $k_2$ is $(1,2^4)$-packing edge-colorable? We already knew $6 \le k_2 \le 16$.  
\end{pblm}

\end{document}